\documentclass[11pt,english,reqno]{smfart}
\usepackage[letterpaper,left=1in,right=1in,top=1.5in,bottom=1.5in]{geometry}

\usepackage[T1]{fontenc}
\usepackage[english,francais]{babel}

\usepackage{amssymb,url,xspace,smfthm}

\iffalse
\makeatletter
    \def\ps@copyright{\ps@empty
    \def\@oddfoot{\hfil\small\copyright 1997, \SMF}}
\makeatother
\fi

\newcommand{\SMF}{Soci\'et\'e Ma\-th\'e\-Ma\-ti\-que de France}
\newcommand{\BibTeX}{{\scshape Bib}\kern-.08em\TeX}
\newcommand{\T}{\S\kern .15em\relax }
\newcommand{\AMS}{$\mathcal{A}$\kern-.1667em\lower.5ex\hbox
        {$\mathcal{M}$}\kern-.125em$\mathcal{S}$}

\tolerance 400
\pretolerance 200

\usepackage[hidelinks]{hyperref}
\usepackage[%nameinlink %, capitalise
]{cleveref}

\newcommand*{\TitleFont}{%
      \usefont{\encodingdefault}{\rmdefault}{}{n}%
      \fontsize{9}{20}%
      \selectfont}

\title[Zeta and wave operators]{Zeta zeros and prolate wave operators\\\TitleFont{Semilocal adelic  operators}}\date {}
\author{Alain Connes}
\address{Coll\`ege de France\\3 Rue d'Ulm\\75005 Paris, France}
\address{I.H.E.S., France}
\email{\url{alain@connes.org}}
\author{Caterina Consani}
\address{Department of Mathematics\\Johns Hopkins University\\Baltimore, MD 21218, USA}
\email{\url{cconsan1@jhu.edu}}
\author{Henri Moscovici}
\address{Department of Mathematics\\Ohio State University\\Columbus, OH 43210, USA}
\email{\url{moscovici.1@osu.edu}}

\subjclass{\href{http://www.ams.org/mathscinet/msc/msc2020.html?t=11Mxx&btn=Current}{11M06},
\href{http://www.ams.org/mathscinet/msc/msc2020.html?t=11Mxx&btn=Current}{11M55},
\href{http://www.ams.org/mathscinet/msc/msc2020.html?t=22E46&btn=Current}{22E46},
\href{http://www.ams.org/mathscinet/msc/msc2020.html?t=33D45&btn=Current}{33D45},
\href{http://www.ams.org/mathscinet/msc/msc2020.html?t=34B20&btn=Current}{34B20}}
\keywords{Riemann zeta, semilocal trace formula, Sonin space, prolate operator, orthogonal polynomials,  metaplectic representation, Jacobi matrix, infrared, ultraviolet}

% General includes
\usepackage{amsmath}
\usepackage{amscd}
\usepackage{amsbsy}
\usepackage{amssymb}
\usepackage{verbatim}
\usepackage{eufrak}
\usepackage{eucal}
\usepackage{microtype}
\usepackage{mathrsfs}
\usepackage{amsthm}
\usepackage{stmaryrd}
\pagestyle{headings}
\usepackage[all,cmtip]{xy}
\usepackage{tikz}
\usetikzlibrary{matrix,arrows}
\usepackage[abbrev,lite,alphabetic]{amsrefs}
\usepackage{dsfont}

% Color definitions
\usepackage{color}
\definecolor{todo}{rgb}{1,0,0}
\definecolor{conditional}{rgb}{0,1,0}
\definecolor{e-mail}{rgb}{0,.40,.80}
\definecolor{reference}{rgb}{.20,.60,.22}
\definecolor{mrnumber}{rgb}{.80,.40,0} %light blue
\definecolor{citation}{rgb}{0,.40,.80} %light blu

\setlength{\parindent}{0pt}

\newcommand{\ie}{{\it i.e.\/}\ }
\newcommand{\qqq}{\,,\,~\forall}
\newcommand{\cf}{{\it cf.}}

\newcommand{\htt}{{Hardy-Titchmarsh~}}
\newcommand{\fourierer}{\fourier_{e_\R}}
\newcommand{\fep}{{\fourier_{e_p}}}
\newcommand{\fourier}{{\F}}

\newcommand{\son}{{\bf S}}

\newcommand{\zp}{{1_{\Z_p}}}

\newcommand{\GL}{{\rm GL}}

% Caligrafic letters
\newcommand{\cA}{\mathcal{A}}
\newcommand{\cB}{{\mathcal B}}
\newcommand{\cE}{\mathcal{E}}
\newcommand{\cH}{\mathcal{H}}
\newcommand{\cK}{\mathcal{K}}
\newcommand{\cM}{\mathcal{M}}
\newcommand{\cP}{{\mathcal P}}
\newcommand{\cS}{\mathcal{S}}
\newcommand{\cU}{{\mathcal U}}
\newcommand{\cV}{{\mathcal V}}
\newcommand{\cW}{{\mathcal W}}

%Boldface letters
\newcommand{\A}{{\mathbb A}}
\newcommand{\C}{{\mathbb C}}
\newcommand{\F}{{\mathbb F}}
\newcommand{\bH}{{\mathbb H}}
\newcommand{\K}{{\mathbb K}}
\newcommand{\N}{\mathbb{N}}
\newcommand{\Q}{{\mathbb Q}}
\newcommand{\R}{\mathbb{R}}
\newcommand{\scal}{{\mathbb S}}
\newcommand{\Z}{\mathbb{Z}}

% Theorems
\theoremstyle{plain}
\newtheorem{thm}{Theorem}[section]
\newtheorem{defn}[thm]{Definition}
\newtheorem{cor}[thm]{Corollary}
\newtheorem{lem}[thm]{Lemma}
\newtheorem{rem}[thm]{Remark}

\newtheorem*{theorem1}{Theorem 1}
\newtheorem*{theorem2}{Theorem 2}

\begin{document}
\def\smfbyname{}

\begin{abstract}
We integrate in the framework  of the semilocal trace formula   two recent discoveries on the spectral realization of the zeros of the Riemann zeta function by introducing a semilocal analogue of the prolate wave operator. The latter plays a key role both in the spectral realization of the low lying zeros of zeta--using the positive part of its spectrum--and of their ultraviolet behavior--using the Sonin space which corresponds to the negative part of the spectrum. In the archimedean case  the prolate operator is the sum of the square of the  scaling operator with the grading of orthogonal polynomials, and we show  that this formulation extends to the semilocal case.   We prove the stability of the semilocal Sonin space under the increase of the finite set of places which govern the semilocal framework and describe their relation with Hilbert spaces of entire functions.  Finally, we relate the prolate operator to the metaplectic representation of the double cover of $\operatorname{SL}(2,\R)$ with the goal of obtaining (in a forthcoming paper) a second candidate for the semilocal prolate operator.

\end{abstract}
\iffalse
\begin{altabstract}
Nous int\'egrons dans le cadre de la formule de trace semilocale deux d\'ecouvertes r\'ecentes sur la r\'ealisation spectrale des z\'eros de la fonction z\^eta de Riemann en introduisant un analogue semilocal de l'op\'erateur d'onde ellipso\"idal. Ce dernier joue un r\^ole cl\'e \`a la fois dans la r\'ealisation spectrale des premiers z\'eros de la fonction z\^eta - en utilisant la partie positive de son spectre - et de leur comportement ultraviolet - en utilisant l'espace de Sonin qui correspond \`a la partie n\'egative du spectre. Dans le cas archim\'edien, l'op\'erateur d'onde ellipso\"idal est la somme du carr\'e de l'op\'erateur d'\'echelle avec la graduation des polyn\^omes orthogonaux, et nous montrons que cette formulation s'\'etend au cas semilocal. Nous prouvons la stabilit\'e de l'espace de Sonin semilocal et d\'ecrivons sa relation avec les espaces de Hilbert de fonctions enti\`eres. Enfin, nous relions l'op\'erateur d'onde ellipso\"idal \`a la repr\'esentation m\'etaplectique de $\widetilde{\operatorname{SL}}(2,\R)$ dans le but d'obtenir un second candidat pour l'op\'erateur d'onde ellipso\"idal semilocal.
\end{altabstract}
\fi
\maketitle

\tableofcontents
\section{Introduction}

%%\href{run:./polplusminus.nb}{The present paper} 
The difficulty of solving the Riemann Hypothesis (RH) is often  
primarily attributed to the infinite number of terms within the Euler product.
\begin{equation}\label{eulerproduct1}
 	\zeta(s) = \prod_p (1- p^{-s})^{-1}
 \end{equation} 
However, contrary to this widespread belief,  there exists a property 
$P(n)$, involving only
the Euler factors for primes smaller than $n$, and
 whose validity for all $n$ is equivalent to RH. 
This is derived  from  Weil's positivity criterion which involves the quadratic
 form $Q_n$ defined using the Riemann-Weil explicit formulas applied to test functions with support in the compact symmetric interval $[\frac 1n,n]$.  Furthermore,  the semilocal trace formula of \cite{C98} gives, for each n, a Hilbert space theoretic framework in which the Weil
quadratic form $Q_n$ becomes the trace of a simple operator theoretic expression, thus offering a natural ground of exploration in order to
attack RH.
The present paper furthers the development of the operator-theoretic approach to RH  
by providing a unified framework that incorporates two recent discoveries related to the spectral realization of the zeros of the Riemann zeta function.
\newline
On the one hand, the results of \cite{CCjot,CCweil} show that the semilocal trace formula  of \cite{C98} gives a conceptual explanation of Weil's positivity for the archimedean place,  the source of it being the Hilbert space representation of the scaling group. Moreover, in the same vein \cite{CCVJ} shows that one can access to the infrared (low lying)  part of the zeros of the Riemann zeta function using  the scaling operator with periodic boundary conditions, restricted to the orthogonal of the radical of the Weil quadratic form. On the other hand the  unusual ultraviolet behavior of the zeros of the Riemann zeta function found an unexpected spectral incarnation in \cite{CM}  in terms of the \emph{negative} eigenvalues of the  classical prolate wave operator extended to the whole real line.\newline
In this paper we blend together these developments and go beyond the single archimedean place, using the semilocal framework of \cite{C98} which provides a canonical Hilbert space-operator theoretic stage in which the main players continue to make sense, and both the explicit formula and the radical of the Weil quadratic form have a conceptual meaning.

\noindent The role of the prolate operator is crucial in both (infrared and ultraviolet) observed agreements with zeros of zeta. Its significance in \cite{CCVJ} arises because the radical of the Weil quadratic form is given by the range of the map $\cE$ defined on the Schwartz space $\cS(\R)_0^{\rm ev}$ by the formula $\cE(f)(u):=u^{1/2}\sum f(nu)$. When applied to eigenfunctions $\varphi_n$ of the prolate operator for \emph{positive} eigenvalues, one generates extremely small eigenvalues of the Weil quadratic form restricted to functions with fixed compact support. The corresponding eigenfunctions $\psi_n$ of the Weil quadratic form are reconstructed in \cite{CCVJ} by Gram-Schmidt orthogonalization of $\cE(\varphi_n)$.  The semilocal form of the map $\cE$  agrees with the canonical identification of the semilocal functions with functions on idele classes. The conceptual meaning of the map $\cE$ will become clear in the semilocal formalism. The eigenfunctions $\psi_n$ were used in  \cite{CCVJ} to condition the scaling operator with  periodic boundary conditions. With this process we generated the low-lying zeros of the zeta function, while the ultraviolet behavior of the zeros remained out of reach. Thus the fact that the expected  UV behavior  is realized by the negative spectrum of the prolate operator suggests the tantalizing program  of finding the semilocal analogue of the prolate operator.  We expect that the use of such operator-theoretic tools in the semilocal case opens a way to handle Weil's positivity as in \cite{CCweil}. In fact, the operator theoretic aspect of the present paper provides a more precise
strategy for addressing the semilocal Weil positivity by comparing the trace functional associated to the operator- which is automatically positive for a selfadjoint operator- with the Weil functional. The conditioning, by the radical of the Weil quadratic form, that worked for the scaling operator in the infrared case will be implemented automatically by the orthogonality of the positive and the negative part of the spectrum of the semilocal prolate operator, whose corresponding negative eigenspace\footnote{up to a finite dimensional possible discrepancy} was identified in  \cite{CM}  to the Sonin space.   

Our approach involves reinterpreting 
 the prolate operator to propose  a candidate for its semilocal counterpart.  This is done by placing at the forefront the scaling operator,  whose  semilocal version is well established in \cite{C98},  as well as  the Sonin space.  \vspace{.05in}

The scaling operator $\scal$ is the selfadjoint generator of the group of dilations in the Hilbert space  $L^2(\R)^{ev}$ of square integrable even functions on the real line. The prolate operator admits a simple expression in terms of the operator $\scal$ and the vector  given by the function $\xi(x):=e^{-\pi x^2}$. This vector belongs to the domain of $\scal^n$ for any $n$, and the span of the $\scal^n \xi$ is $L^2(\R)^{ev}$. We let $N$ be the grading associated to the filtration coming from the linear span of polynomials $p(\scal)\xi$, with $\operatorname{deg} P \leq n$. Ignoring the delicate domain definition needed to obtain a selfadjoint operator, the prolate operator ${\bf W}_\lambda$ is given by the formal expression
\begin{equation}\label{wlambda}
{\bf W}_\lambda=-\scal^2+2\pi\lambda^2(4N+1)-\frac 14.
\end{equation}
Formula \eqref{wlambda} is meaningful for any {\em cyclic pair} $(D, \xi)$ given by  a selfadjoint operator $D$ acting in a Hilbert space $\cH$ and a unit vector $\xi \in \cap_{n\in \N} {\rm Dom} D^n$ which is  cyclic  (\ie such that the vectors $p(D)\xi$ with $p$ a polynomial, are dense  in  $\cH$).  For any   cyclic pair $(D, \xi)$, the spectral theorem provides a {\em canonical form} (or diagonalization) in which $D$ is represented as the multiplication by the variable $s\in \R$ in the Hilbert space $\cH=L^2(\R,d\mu)$  where $d\mu$ is the spectral measure defined by  $\int f(s)d\mu(s):=\langle f(D)\xi\vert \xi\rangle$,   and the cyclic vector $\xi$ is the constant function $1$. The theory of orthogonal polynomials for the measure $d\mu$ then provides the representation of $D$ as a Jacobi matrix and of the grading operator $N$ as a diagonal matrix. We recall these basic facts in  \cref{cyclpair}.  It turns out that one can naturally associate to a cyclic pair a spectral triple  $(\cA,\cH,D)$ where $\cH$ and $D$ are unchanged while the algebra $\cA:=c_0(\N)$ acts via the grading operator $N$ associated to the orthogonal polynomials of the spectral measure. The role of the spectral triple  $(\cA,\cH,D)$ is to understand algebraically  the prolate operator as a perturbation of the operator $-D^2$ by the addition of an element of the algebra $\cA$. The spectral triple  $(\cA,\cH,D)$ of a cyclic pair is {\em even} (in the sense that it admits a $\Z/2\Z$-grading commuting with the algebra and anticommuting with $D$) if and only if the spectral measure $d\mu$ is even, \ie invariant under $s\mapsto -s$. Moreover such a $\Z/2\Z$-grading $\gamma$ is unique if it exists.   In the even case one obtains a Jacobi matrix representation for the prolate operator (\cref{jacobprol}).\newline
In  \cref{pso} we describe in  \Cref{mainstart} the canonical form of the even cyclic pair associated to the scaling operator $D=\scal$ and the cyclic vector  $\xi_\infty(x)=2^{1/4}e^{-\pi x^2}$.  The grading is given by the Fourier transform $\fourier_{e_\R}$ acting in    $ L^2(\R)^{ev}$. The isomorphism with the canonical form is given by the unitary 
\begin{equation}\label{htintro}
	\cV=\cM\circ\cU:L^2(\R)^{ev}\to L^2(\R,dm)
\end{equation}
which is the composition of the unitary transformation $\cU: L^2(\R)^{ev}\to L^2(\R)$
$$
\cU(f)=\pi^{-\frac12}\ \int_0^{\infty} f(v) v^{\frac 12-is}d^*v\,.
$$ 
with the  multiplication operator  $\cM(f)(s):=\cU(\xi_\infty)^{-1}(s)f(s)$ which is a unitary isomorphism $\cM:L^2(\R)\to L^2(\R,dm)$ for the probability measure $dm(s):=(2\pi)^{-\frac 32}\vert \Gamma(\frac 14+ \frac{is}{2})\vert^2ds$ on $\R$. The unitary isomorphism $\cV$ of \eqref{htintro} is the \htt transform  which, modulo normalizations, appears already in the paper \cite[Theorem 1, p. 201]{HT}. 
Their original motivation was  to construct self-reciprocal functions by conjugating the Fourier transform with the symmetry $s\mapsto -s$. This puts in evidence the role of the evenness, \ie the $\Z/2\Z$-grading,  of the cyclic pair. The  unitary isomorphism $\cV$ transforms the Hermite functions  into the orthogonal polynomials $P_n$ for the measure $dm$ and the scaling operator $\scal$ into the multiplication by $s$. Moreover the prolate wave operator 
\begin{equation}
({\bf W}_{\lambda}\psi)(q) = \,- \partial (( \lambda^2- q^2) \partial )\,
\psi(q) + (2 \pi \lambda  q)^2 \, \psi(q)   \label{WLambdaq1}
\end{equation} 
becomes a special case of \eqref{wlambda} and, in fact, of a general construction for orthogonal polynomials. More precisely, with $N$  the grading operator  $N(P_n):=n\ P_n$  in $L^2(\R,dm)$, one obtains
		\begin{equation}\label{wlambdconjintro}
\cV\circ {\bf W}_{\lambda}\circ \cV^*=-s^2+2\pi\lambda^2(4N+1)-\frac 14
\end{equation}
	 where $s^2$ is the operator of multiplication by the square of the variable in $L^2(\R,dm)$.  The right hand side of \eqref{wlambdconjintro} is meaningful  in the general context of orthogonal polynomials.
	 We compute in \cref{sectj1} the hermitian  Jacobi matrix of the scaling operator $\scal$ in the basis  of orthogonal polynomials. Its diagonal elements are $0$, while the non-zero matrix elements above the diagonal are 
 $\scal_{n,n+1}=i\sqrt{(n+1/2)(n+1)}$. In \cref{jacofan} we describe the 
 two hermitian Jacobi matrices $J_\pm$ associated to the operators  $\scal^2$ restricted to the eigenspaces of $\fourier_{e_\R}$ for the eigenvalues $\pm 1$. Similarly we obtain the Jacobi matrices of  the prolate operator ${\bf W}_{\lambda}$.    \newline
   In \cref{sectmapE} we show that the zeros of the zeta function appear from the action of the scaling operator on the quotient by the range of the map $\cE$ applied to the positive eigenspace of ${\bf W}_{\lambda}$ when $\lambda\to \infty$.\vspace{0.03in}
      
\Cref{sectsemilocal} is devoted to the extension of the \htt transform in the semilocal situation involving a finite set $S$ of places of $\Q$ containing the archimedean one, and the analysis of the semilocal Sonin space. The semilocal version of \Cref{mainstart}  involves a new measure which, up to normalization, is given  by the square of the  absolute value of the restriction to the critical line of the product of local factors for the places in $S$.\newline
 To state this result we need the following notation (see \cref{notat}). The ring of semilocal adeles is the product $\A_S=\prod_S \Q_v$,  % see \eqref{AQS},
 the semilocal adele class space is the quotient $X_S:=\A_S/\Gamma$, % see \eqref{XQS},
  where  $$\Gamma=\{ \pm p_1^{n_1} \cdots p_k^{n_k} \, :\,  p_j
\in S \setminus\{ \infty \} \,,\, n_j\in \Z\}\subset \GL_1(\A_{S})= \prod_{p\in S} \GL_1(\Q_p). $$
The semilocal idele class group $C_S:=\GL_1(\A_{S})/\Gamma$ acts on $X_S$ by multiplication and this action restricts to its maximal compact subgroup $K_S$. One has a canonical isomorphism $w_S$ of the Hilbert space $L^2(X_S)^{K_S}$ with $L^2(\R_+^*,d^*u)$, see \eqref{wS}.  With $\fourier_\mu:L^2(\R_+^*,d^*u)\to L^2(\R)$ being the Fourier transform for the multiplicative group $\R_+^*$ see \eqref{mufourier},
we obtain the unitary $\cU_S:=\F_\mu\circ  w_S$.  Finally we let $\cM_S:L^2(\R)\to L^2(\R,dm_S)$ be the unitary  given by 
\begin{equation}\label{msintro}
\cM_S(f)(s):=\left(\prod_{v\in S} L_v(\frac 12-is) \right)^{-1}	f(s),\ \ dm_S(s):=\Big\vert\prod_{v\in S} L_v(\frac 12-is)\Big\vert^2\, ds.
\end{equation}
where the $L_v$ are the local Euler factors.
We then derive the following result
\begin{theorem1}({\it Proposition} \ref{groundstate} and {\it Proposition} \ref{httransfoS})\label{thmsemilocintro}
Let $S$ be a finite set of places with $\infty\in S$, let $R_S$ be the maximal compact subring of $\prod_{p\in S\setminus \{\infty\}}\Q_p$,  and $f\in L^2(\R)^{ev}$. Then let $\eta_S(f)$ be the class of the function $1_{R_S}\otimes f$ in $L^2(X_S)^{K_S}$. 
\begin{enumerate}
\item[(i)] One has 
\begin{equation}\label{ms0intro}
	\F_\mu w_S(\eta_S(f))(s)=\Big(\prod_{p\in S\setminus\{\infty\}}L_p(\frac 12-is)\Big)(\fourier_{\mu} w_\infty f)(s).
\end{equation}
 \item[(ii)] Let the Fourier transform for $\Q_p$ be normalized so that the function $1_{\Z_p}$ is its own Fourier transform and let $\fourier_S$, acting in $L^2(X_S)$, be induced by the tensor product of the local Fourier transforms. One has $$\fourier_S\circ \eta_S=\eta_S\circ \fourierer.$$
  \item[(iii)] The unitary transformation $\cV_S:=\cM_S\circ \cU_S:L^2(X_S)^{K_S}\to L^2(\R,dm_S)$  gives the canonical form of the cyclic pair  $(D,\xi)$ where $D:= \scal$ and $\xi= \xi_S:=\eta_S(\xi_\infty )$.
\item[(iv)] The cyclic pair  $(\scal,\xi_S)$ is even and the $\Z/2\Z$ grading is given by the Fourier transform $\fourier_S$ which becomes the symmetry $s\mapsto -s$ under the unitary transformation $\cV_S$.
\end{enumerate}
\end{theorem1}

 The computation of the coefficients of the hermitian Jacobi matrix of the cyclic pair for a general $S$ as above is deferred to a forthcoming paper. In order to analyse the semilocal Sonin spaces we introduce the dual \htt transform in \cref{psodual}. In the case of the single archimedan place this transform was  introduced by J-F.~Burnol  in connection with the work of L. de Branges. The main result of this section is the construction of a hilbertian\footnote{We use the term ``hilbertian" to denote the underlying topological vector space structure of a Hilbert space} isomorphism $\theta_S:L^2(\R)^{ev}\to L^2(X_S)^{K_S}$ which connects together the semilocal Sonin spaces:

\begin{theorem2}\label{isosonintro} Let $S$ be a finite set of places with $\infty\in S$ and $\lambda >0$. Then the map $\theta_S$ induces  a hilbertian isomorphism of the Sonin spaces $\theta_S:\son_\lambda(\R,e_\infty)\to\son_\lambda(X_S,\alpha)$ where $\alpha$ is the normalized character.	
\end{theorem2}

This result suggests that the obtained invariantly defined Sonin space should, when suitably equiped with a prolate type operator, play the role of the sought for Weil cohomology. In fact we observed in \cite{CM} that, in the case of the single archimedean place, the Sonin space corresponds (up to a finite dimensional possible discrepancy) to the negative part of the spectrum of the prolate operator and this gives another constraint in the search of the semilocal analogue of the prolate operator. Moreover the results of \cite{GR} suggest a strong relation between orthogonal polynomials-which play a key role in the semilocal case- and reproducing kernels commuting with differential operators.\newline
Another conceptual point of view on the prolate operator is obtained in  \cref{metasect}. We show that the prolate operator admits (still at the formal algebraic level) a description as an element of the enveloping algebra of the Lie algebra of $SL_2(\R)$ through the  metaplectic representation of the 
double cover of $SL_2(\R)$. We compare this description of the prolate operator with \Cref{mainstart} in \cref{HT}.  We  show that the generators  of the metaplectic representation make sense in the context of cyclic pairs \ie equivalently of orthogonal polynomials.  We  prove (\Cref{dzero}) that the discrepency coefficients $d_n$  vanish precisely in the situation of  \Cref{mainstart} and their vanishing determines uniquely the cyclic pair $(\scal,\xi_\infty)$,   the associated moments and the representation.

In a forthcoming paper we shall develop the role of the Weil  representation \cite{weil} of the  metaplectic cover of the algebraic group $SL_2(\A_S)$  in the semilocal context,  thereby introducing  a second potential candidate for the semilocal prolate operator.

\section{Cyclic pairs and associated prolate operators}\label{cyclpair}
In this section we give a general construction starting from a pair $(D,\xi)$ of a selfadjoint operator $D$ acting in a Hilbert space $\cH$ and a unit vector $\xi \in \cap_{n\in \N}\operatorname{Dom}D^n$. We assume that $\cH$ is infinite dimensional and that $\xi$ is a {\em cyclic vector}, \ie that the linear span of the vectors $D^j\xi$ for $j\in \N$ is dense in $\cH$. Given such a pair we let $E_n\subset \cH$ be the linear span of the vectors $D^j\xi$ for $j\leq n$. They are finite dimensional subspaces of $\cH$ and  $\dim\, E_n=n+1$, since the vectors $D^j\xi$ are linearly independent. One has by construction $D(E_n)\subset E_{n+1}$ and the general situation is described as follows by the spectral theorem. 
\begin{thm}\label{cycpair} Let $(D,\xi)$ be a cyclic pair as above.
\begin{enumerate}
\item[(i)] There exists a probability measure $\mu$ on $\R$ and a unitary isomorphism $U: \cH\to L^2(\R, d\mu)$ which transforms $D$ in the operator of multiplication by the variable $s\in \R$ and the vector $\xi$ in the constant function equal to $1$.	
\item[(ii)] Under the isomorphism $U$ the subspace $E_n$ becomes the space of polynomials of degree $\leq n$.
\item[(iii)] The spectral measure $\int f(s)d\mu(s):=\langle f(D)\xi\vert \xi\rangle$ is a complete invariant of the cyclic pair.
\end{enumerate}
\end{thm}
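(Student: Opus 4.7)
The plan is to reduce the statement to the multiplicity-free case of the spectral theorem for unbounded selfadjoint operators. I would first define the candidate spectral measure $\mu$ by
\[
	\mu(B) := \langle E_D(B)\xi\vert \xi\rangle, \qquad B\subset\R\ \mbox{Borel},
\]
where $E_D$ is the projection-valued measure furnished by the spectral theorem for $D$. Since $\|\xi\|=1$, $\mu$ is a probability measure. The hypothesis that $\xi\in\operatorname{Dom}D^n$ for all $n$ translates, via the functional calculus, into the identity $\|D^n\xi\|^2=\int s^{2n}\,d\mu(s)<\infty$, so every polynomial is square-integrable against $\mu$.

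Next I would build the unitary $U$. Consider the map $V: L^2(\R,d\mu)\to\cH$ sending a bounded Borel function $f$ to $f(D)\xi$. The functional calculus yields
\[
	\|f(D)\xi\|^2 = \langle (\bar f f)(D)\xi\vert\xi\rangle = \int |f(s)|^2\,d\mu(s),
\]
so $V$ is an isometry on bounded Borel functions, and extends by density to an isometry on all of $L^2(\R,d\mu)$. Its image contains every vector $D^j\xi=V(s^j)$, hence all vectors $p(D)\xi$ for $p$ a polynomial, which are dense in $\cH$ by cyclicity; therefore $V$ is surjective, hence unitary. Setting $U:=V^{-1}$ gives the claimed isomorphism, and from $V(1)=\xi$ and $V(s\cdot f)=DV(f)$ one obtains the two intertwining properties in (i). Statement (ii) is then immediate, because $D^j\xi=V(s^j)$ forces $U(E_n)$ to be exactly the span of $\{1,s,\dots,s^n\}$.

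For (iii), I would argue in both directions. The spectral measure is manifestly determined by the cyclic pair through the formula defining $\mu$. Conversely, given two cyclic pairs $(D_1,\xi_1)$ and $(D_2,\xi_2)$ sharing the same spectral measure $\mu$, part (i) produces unitaries $U_i:\cH_i\to L^2(\R,d\mu)$ with the stated intertwining properties; the composition $U_2^{-1}\circ U_1:\cH_1\to\cH_2$ is then a unitary isomorphism carrying $D_1$ to $D_2$ and $\xi_1$ to $\xi_2$, so the two cyclic pairs are unitarily equivalent.

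The only genuinely delicate point is extending the isometry $V$ from bounded Borel functions to the unbounded elements $s^j$ and handling the domain of $D$ in the model side, that is, verifying that the multiplication operator $M_s$ on $L^2(\R,d\mu)$ is the exact unitary conjugate of the unbounded selfadjoint operator $D$ (not merely of its restriction to a polynomial core). This is resolved by invoking the spectral theorem's characterization of $\operatorname{Dom}D$ as $\{\psi\in\cH : \int s^2\,d\langle E_D(s)\psi,\psi\rangle<\infty\}$, which matches precisely $\operatorname{Dom}M_s=\{f\in L^2(d\mu): sf\in L^2(d\mu)\}$ under $V$; the moment hypothesis on $\xi$ ensures that polynomials form a common core, so essential selfadjointness on this core yields the full intertwining, not just the algebraic one on $E_n$.
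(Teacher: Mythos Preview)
Your argument is correct and complete. The paper itself states \Cref{cycpair} without proof, treating it as a standard consequence of the spectral theorem for selfadjoint operators with a cyclic vector; so there is no ``paper's proof'' to compare with, and your write-up supplies exactly the details one would expect.

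One small remark: in your last paragraph you offer two justifications for the full intertwining of domains. The first---matching $\operatorname{Dom}D$ with $\operatorname{Dom}M_s$ through the spectral characterization $\int s^2\,d\langle E_D(s)\psi,\psi\rangle<\infty$---is correct and already sufficient. The second, that ``the moment hypothesis on $\xi$ ensures that polynomials form a common core'', is not needed and is in fact not true in general: polynomials are dense in $L^2(\R,d\mu)$ (equivalently, form a core for $M_s$) only when the Hamburger moment problem for $\mu$ is determinate, which is not assumed here. Since your first argument handles the domains directly, you should simply drop the core remark rather than rely on it.
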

\Cref{cycpair} gives the canonical form of a cyclic pair $(D,\xi)$. 
One can then apply the general theory of orthogonal polynomials and obtain an orthonormal basis $(\xi_j)$ where $\xi_0=\xi$ and $E_n$ is the span of the $\xi_j$ for $j\leq n$. One then considers the {\em number} operator $N$ which is diagonal in the orthonormal basis $(\xi_j)$ and such that $N\xi_j:=j \xi_j$.
\begin{defn} \label{prolop} Let $(D,\xi)$ be a cyclic pair. The {\em formal prolate} operator $\omega(D,\xi,\lambda)$ is the operator 
	$$\omega(D,\xi,\lambda) :=-D^2 + \lambda^2 N. $$
\end{defn}
This definition is formal inasmuch as it does not give precisely the domain of the operator. We shall see in  \cref{pso} that the standard prolate differential operator ${\bf W}_{\lambda}$ is a special case of \cref{prolop}. It is clear in that case that the obtained formal operator
is symmetric and finding the relevant selfadjoint extension is delicate.
\begin{defn} \label{z2} Let $(D,\xi)$ be a cyclic pair. A $\Z/2$-grading $\gamma$ is a unitary of square $1$, $\gamma^2=1$, $\gamma=\gamma^*$ such that $\gamma D=-D\gamma$ and $\gamma \xi=\xi$. A cyclic pair is {\em even} if it admits a $\Z/2$-grading.	
\end{defn}
\begin{prop}\label{z2grad} A cyclic pair is {\em even} iff the spectral measure $\mu$ of \Cref{cycpair} is invariant under $s\mapsto -s$. The $\Z/2$-grading is unique (if it exists) and is equal to $\gamma=\exp(i\pi N)$.	
\end{prop}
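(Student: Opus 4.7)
The plan is to transport the question to the canonical form supplied by \Cref{cycpair}, in which we may assume $\cH = L^2(\R, d\mu)$, $D$ is multiplication by the variable $s$, and $\xi$ is the constant function $\mathbf{1}$. Both the existence of a grading, and its explicit form, become concrete statements in this model, and uniqueness is a cyclicity argument.

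For the direct implication, suppose a grading $\gamma$ exists. Selfadjointness of $\gamma$, combined with $\gamma D = -D\gamma$ and the Borel functional calculus, yields $\gamma f(D)\gamma = f(-D)$ for every bounded Borel function $f$. Together with $\gamma \xi = \xi$, applied to indicator functions, this gives, for every Borel set $E \subset \R$,
\[
\mu(E) = \langle 1_E(D)\xi, \xi\rangle = \langle \gamma\, 1_E(D)\, \gamma \xi, \xi\rangle = \langle 1_{-E}(D)\xi, \xi\rangle = \mu(-E),
\]
so $\mu$ is invariant under $s \mapsto -s$. Conversely, assuming such invariance, the map $(\gamma_0 f)(s) := f(-s)$ is a well-defined selfadjoint unitary on $L^2(\R, d\mu)$ which anticommutes with multiplication by $s$ and fixes $\mathbf{1}$; pulling back by the unitary $U$ of \Cref{cycpair} produces the required grading of $(D, \xi)$.

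For uniqueness: given two gradings $\gamma_1, \gamma_2$, the product $\gamma_1 \gamma_2$ commutes with $D$ (each factor anticommutes) and fixes $\xi$ (each factor fixes $\xi$). Since $\xi$ is cyclic for $D$, any bounded operator commuting with $D$ is of the form $h(D)$ for a bounded Borel $h$, and the equation $h(D)\xi = \xi$ forces $h = 1$ $\mu$-a.e.; hence $\gamma_1 \gamma_2 = I$ and $\gamma_1 = \gamma_2$.

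Finally, to identify this unique grading with $\exp(i\pi N)$, it suffices to evaluate it on the orthonormal polynomial basis $(\xi_j) = (P_j)$ associated to $\mu$. Symmetry of $\mu$ forces the diagonal coefficients in the Jacobi three-term recurrence to vanish, and a direct induction on $n$ from that recurrence shows that each $P_n$ has parity $(-1)^n$, i.e.\ $P_n(-s) = (-1)^n P_n(s)$. Consequently $\gamma_0 \xi_n = (-1)^n \xi_n = e^{i\pi n} \xi_n = \exp(i\pi N)\xi_n$, and returning through $U$ identifies $\gamma$ with $\exp(i\pi N)$. The only delicate step is this parity statement, which relies on the sign convention implicit in the Gram-Schmidt construction of the orthonormal basis; once that convention is fixed, the remainder of the argument is purely formal.
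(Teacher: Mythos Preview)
Your proof is correct and follows essentially the same route as the paper: transport to the canonical model, build the candidate grading as $f(s)\mapsto f(-s)$, deduce symmetry of $\mu$ from the grading via functional calculus, and use cyclicity for uniqueness. The only cosmetic difference is in the identification $\gamma=\exp(i\pi N)$: you argue via the vanishing of the diagonal Jacobi coefficients and the resulting parity $P_n(-s)=(-1)^nP_n(s)$ from the three-term recurrence, whereas the paper observes directly that $\gamma D^j\xi=(-1)^jD^j\xi$ and hence Gram--Schmidt runs separately on the even and odd parts of the filtration---these are two phrasings of the same fact, and your closing remark about ``sign conventions'' is unnecessary since parity is unaffected by the phase normalization of the $P_n$.
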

\proof Assume that the measure $\mu$ of \Cref{cycpair} is invariant under $s\mapsto -s$. Let $\gamma(f)(s):=f(-s)$ for all $f\in L^2(\R, d\mu)$. One has $\gamma^2=1$ and $\gamma$ is unitary by invariance of the measure. By construction $\gamma$ anticommutes with the multiplication by $s$ and one has $\gamma \xi=\xi$ where $\xi(s)=1$, $\forall s$. Using the unitary isomorphism $U: \cH\to L^2(\R, d\mu)$ of \Cref{cycpair} one obtains a $\Z/2$-grading for $(D,\xi)$. Conversely let $\gamma$ be a $\Z/2$-grading for $(D,\xi)$. Then the spectral measure is invariant under $s\mapsto -s$, since one has 
$$
\int h(s)d\mu(s):=\langle h(D)\xi\mid \xi \rangle =\langle h(D)\gamma\xi\mid \gamma\xi \rangle
=\langle h(-D)\xi\mid \xi \rangle=\int h(-s)d\mu(s)
$$
Let us show the uniqueness of the $\Z/2$-grading. Let $\gamma'$ be another $\Z/2$-grading. Then the product $u=\gamma\gamma'$ commutes with $D$ and fulfills $u\xi=\xi$. Thus one has $uD^j\xi=D^j\xi$ for all $j\in \N$ and since $\xi$ is cyclic one obtains $u=1$ and hence $\gamma'=\gamma$. Let us show that $\gamma=\exp(i\pi N)$. One has $\gamma D^j\xi=(-1)^jD^j\xi$. This shows that the subspaces $E_n$ (span of the $D^j\xi$ for $j\leq n$) are globally invariant under $\gamma$ and that  they decompose as the eigenspaces $E_n^{\pm}$ generated respectively by the $D^j\xi$ for $j\leq n$, $j$ even and $j$ odd. The Gram--Schmidt orthonormalization process thus takes place independently in  $E_n^{\pm}$ yielding the orthonormal basis $\xi_j$ where $\xi_j\in E_n^{\pm}$ according to the parity of $j$. It follows that $\gamma=\exp(i\pi N)$.\endproof 
\subsection{Spectral triple of a cyclic pair}
Let $(D,\xi)$ be an even cyclic pair and $\gamma$ the $\Z/2$-grading.  \cref{z2grad} suggests  to consider the even spectral triple $(\cA,\cH,D)$ where the $C^*$-algebra $\cA:=c_0(\N)$ of sequences vanishing at $\infty$ acts as follows in the Hilbert space $\cH$ of the cyclic pair:
\begin{equation}\label{spectrip}
c_0(\N)\times \cH \ni (f, \eta) \mapsto f(N)\eta \in \cH
\end{equation}
\begin{prop}\label{spectr} Let $(D,\xi)$ be a cyclic pair, and $\cA:=c_0(\N)$ act in $\cH$ by \eqref{spectrip}. Then $(\cA,\cH,D)$ is a spectral triple and the commutators $[D,f]$ make sense and are bounded for any $f\in c_c(\N)\subset c_0(\N)$. 
\end{prop}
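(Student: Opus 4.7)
\medskip

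\textbf{Proof plan.} The plan is to verify the two substantive requirements: that the assignment $f \mapsto f(N)$ gives a bounded $\ast$-representation of $\cA = c_0(\N)$ on $\cH$, and that for every $f \in c_c(\N)$ the commutator $[D,f(N)]$ is well defined on a common dense domain and extends to a bounded (in fact finite rank) operator on $\cH$.

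First I would settle the representation. Applying \Cref{cycpair} transfers the problem to the canonical form, where $\cH = L^2(\R,d\mu)$, $D$ is multiplication by $s$, and $N$ is the diagonal operator $N\xi_j = j\xi_j$ in the orthonormal basis $(\xi_j)$ of orthogonal polynomials associated to $\mu$. Then $N$ is self-adjoint with spectrum $\N$, so by the Borel functional calculus $f(N)$ is bounded with $\|f(N)\| = \|f\|_\infty$ for every $f \in c_0(\N)$, and the map $f \mapsto f(N)$ is a $\ast$-homomorphism. This gives the representation of $\cA$ on $\cH$.

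Next I would turn to the commutator estimate, which is the crux of the statement. Recall that in the orthonormal basis $(\xi_j)$ the operator $D$ has a hermitian tridiagonal (Jacobi) form: there exist reals $a_n$ and scalars $b_n$ such that
\begin{equation*}
D\xi_n = a_n\xi_n + b_n\xi_{n+1} + \bar b_{n-1}\xi_{n-1},
\end{equation*}
with the convention $b_{-1}=0$. Each $\xi_n$ lies in $E_n \subset \bigcap_k \operatorname{Dom}D^k$, so the finite linear span $\mathcal{D}:=\operatorname{span}\{\xi_n : n\in\N\}$ is a core for $D$. For $f\in c_c(\N)$ with support in $\{0,\dots,M\}$, the operator $f(N)$ has range in $E_M$, which is contained in $\mathcal{D}$, and hence maps $\mathcal{D}$ into itself. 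A direct computation on the basis vectors gives
\begin{equation*}
[D,f(N)]\xi_n = b_n\bigl(f(n)-f(n+1)\bigr)\xi_{n+1} + \bar b_{n-1}\bigl(f(n)-f(n-1)\bigr)\xi_{n-1}.
\end{equation*}
Since $f(k)=0$ for $k>M$, the right-hand side vanishes as soon as $n > M+1$. Therefore $[D,f(N)]$, initially defined on $\mathcal{D}$, has range in the finite dimensional subspace $E_{M+1}$ and annihilates the orthogonal complement of $E_{M+1}$; it is a finite rank operator, in particular bounded, and extends uniquely to $\cH$.

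Finally I would observe that $c_c(\N)$ is norm-dense in $c_0(\N)$, so the commutator boundedness holds on a dense $\ast$-subalgebra, completing the verification of the spectral triple axioms in the sense used here. The only potentially delicate point is the domain question for the commutator, but this is handled cleanly by working on the core $\mathcal{D}$; no selfadjoint extension issues arise for $[D,f(N)]$ itself since that operator turns out to be of finite rank.
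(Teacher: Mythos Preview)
Your argument is correct. Both you and the paper conclude that $[D,f(N)]$ is finite rank for $f\in c_c(\N)$, but the organization differs. The paper writes $f=\sum f(j)\delta_j$, notes that $\delta_j(N)=\vert\xi_j\rangle\langle\xi_j\vert$ is a rank-one projection, and then reads off $[D,\delta_j(N)]=\vert D\xi_j\rangle\langle\xi_j\vert-\vert\xi_j\rangle\langle D\xi_j\vert$; this needs only $\xi_j\in\operatorname{Dom}D$ and never invokes the three-term recurrence. You instead use the Jacobi tridiagonal form of $D$ and compute the commutator entrywise on the basis $(\xi_n)$. Your route yields the explicit expression $[D,f(N)]\xi_n=b_n(f(n)-f(n+1))\xi_{n+1}+\bar b_{n-1}(f(n)-f(n-1))\xi_{n-1}$, which is precisely the computation the paper carries out in the paragraph \emph{after} the proposition in order to bound $\Vert[D,f]\Vert$ for the spectral distance. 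So the paper's version is a touch quicker for the bare statement, while yours front-loads the calculation needed for the next result.
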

\proof Let $f\in c_c(\N)$, it is by construction a finite linear combination of the delta functions $\delta_j(k):=0$ for $k\neq j$ and $\delta_j(j)=1$. The action \eqref{spectrip} of $\delta_j$ in $\cH$ is the rank one operator $\vert \xi_j\rangle\langle \xi_j\vert$, and its commutator with $D$ makes sense since $\xi_j\in {\rm Dom} D$ and it is equal to the bounded operator $\vert D\xi_j\rangle\langle \xi_j\vert-\vert \xi_j\rangle\langle D\xi_j\vert$. Thus the commutators $[D,f]$ are finite rank operators for any $f\in c_c(\N)\subset c_0(\N)$.\endproof 
One has $D\xi_j\in E_{j+1}$ for all $j$ and $\langle D\xi_n\vert \xi_k\rangle=0$ for $k<n-1$ using $D=D^*$. Thus one has 
\begin{equation}\label{dxij}
D\xi_n=a_{n-1}\xi_{n-1}+a_{n}\xi_{n+1}, \ \ a_n\neq 0, 
\end{equation}
where one can fix by induction the phase of the orthonormal vectors $\xi_n$ in such a way that the non-zero scalars $a_n:=\langle D\xi_n\vert \xi_{n+1}\rangle$ are positive. Let $f\in c_c(\N)$, one has $f=\sum f(j)\delta_j$ and 
$$
[D,f]=\sum f(j)[D,\delta_j]=\sum f(j)a_j\left( \vert \xi_{j+1}\rangle\langle \xi_j\vert
-\vert \xi_{j}\rangle\langle \xi_{j+1}\vert\right)+\sum f(j)a_{j-1}\left( \vert \xi_{j-1}\rangle\langle \xi_j\vert
-\vert \xi_{j}\rangle\langle \xi_{j-1}\vert\right)
$$
$$
=\sum (f(j)-f(j+1))a_j\left( \vert \xi_{j+1}\rangle\langle \xi_j\vert
-\vert \xi_{j}\rangle\langle \xi_{j+1}\vert\right)=\sum \alpha_j\left( \vert \xi_{j+1}\rangle\langle \xi_j\vert
-\vert \xi_{j}\rangle\langle \xi_{j+1}\vert\right)
$$
where $\alpha_j:=(f(j)-f(j+1))a_j$. Let $T(j):=\left( \vert \xi_{j+1}\rangle\langle \xi_j\vert
-\vert \xi_{j}\rangle\langle \xi_{j+1}\vert\right)$. One has 
$$
\Vert \sum \alpha_j T(j)\Vert\leq \Vert \sum \alpha_{2j} T(2j)\Vert+\Vert \sum \alpha_{2j+1} T(2j+1)\Vert=\max \vert \alpha_{2j}\vert + \max \vert \alpha_{2j+1}\vert
$$
since the matrices of the terms appearing in the rhs are block diagonal. Moreover one has 
$$
\langle \xi_{k+1}\vert T(j)\xi_k  \rangle =\langle \xi_{k+1}\vert \xi_{j+1}\rangle\langle \xi_j\vert\xi_k  \rangle
-\langle \xi_{k+1}\vert \xi_{j}\rangle\langle \xi_{j+1}\vert\xi_k  \rangle=\delta(j-k)
$$
so that 
$$
\langle \xi_{k+1}\vert \left(\sum \alpha_j T(j)\right)\xi_k  \rangle =\alpha_k
$$
and one obtains the inequalities 
$$
\max \vert \alpha_{j}\vert\leq \Vert \sum \alpha_j T(j)\Vert\leq \max \vert \alpha_{2j}\vert + \max \vert \alpha_{2j+1}\vert
$$
which gives 
$$
\max \vert (f(j)-f(j+1))a_j\vert\leq\Vert [D,f]\Vert\leq \max_{j\,{\rm even}} \vert (f(j)-f(j+1))a_j\vert+\max_{j\,{\rm odd}} \vert (f(j)-f(j+1))a_j\vert
$$
and 
\begin{equation}\label{dist0}
		\max \vert (f(j)-f(j+1))a_j\vert\leq\Vert [D,f]\Vert\leq 2\max \vert (f(j)-f(j+1))a_j\vert
	\end{equation}\endproof 
	 We now compute  the spectral distance function on $\N$ associated to the spectral triple  $(\cA,\cH,D)$. 
\begin{prop}\label{dist} Let $\phi(n):=\sum_{0\leq j\leq n}a_j^{-1}\in \R_+$. The spectral distance function of the spectral triple $(\cA,\cH,D)$ of \cref{spectr} is equivalent to the distance $d(n,m):=\vert \phi(n)-\phi(m)\vert$, more precisely
	\begin{equation}\label{dist1}
		\frac 12 d(n,m)\leq \sup \{\vert f(n)-f(m)\vert\mid \Vert [D,f]\Vert\leq 1\}\leq d(n,m)
	\end{equation}
\end{prop}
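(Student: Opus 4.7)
The plan is to deduce both inequalities from the two-sided norm estimate
\begin{equation*}
\max_j |(f(j+1)-f(j))\,a_j| \;\le\; \|[D,f]\| \;\le\; 2\max_j |(f(j+1)-f(j))\,a_j|,
\end{equation*}
derived at the end of the proof of \cref{spectr}.

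For the right inequality in \eqref{dist1}, I take an $f\in\cA$ with $\|[D,f]\|\le 1$ and invoke the left half of the key estimate to get $|f(j+1)-f(j)|\le a_j^{-1}$ for every $j$. Telescoping along the discrete path from $m$ to $n$ (assume $n>m$, WLOG) then yields
\begin{equation*}
|f(n)-f(m)| \;\le\; \sum_{j=m}^{n-1}|f(j+1)-f(j)| \;\le\; \sum_{j=m}^{n-1}a_j^{-1} \;=\; |\phi(n)-\phi(m)| \;=\; d(n,m),
\end{equation*}
after a harmless shift of the summation index in the definition of $\phi$. Passing to the supremum over admissible $f$ gives the upper bound.

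For the left inequality, the strategy is to construct an explicit near-optimal test function $f\in c_c(\N)\subset\cA$. For $n>m$, pick $N>n$ and define $f$ by setting $f(j)=0$ for $j\le m$, prescribing ascending increments $f(j+1)-f(j)=\tfrac{1}{2a_j}$ for $m\le j<n$, followed by descending increments $f(j+1)-f(j)=-\tfrac{\kappa}{2a_j}$ for $n\le j<N$ with $\kappa\in(0,1]$ chosen so that $\kappa\sum_{j=n}^{N-1}a_j^{-1}=\sum_{j=m}^{n-1}a_j^{-1}$, and finally $f(j)=0$ for $j\ge N$. By construction every increment satisfies $|(f(j+1)-f(j))\,a_j|\le \tfrac12$, so the right half of the key estimate delivers $\|[D,f]\|\le 1$, while telescoping from $m$ up to $n$ gives $f(n)-f(m)=\tfrac12 d(n,m)$. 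Taking the supremum over such $f$ finishes the argument.

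The principal obstacle is the existence of a valid $\kappa\in(0,1]$, i.e.\ the requirement that the tail $\sum_{j\ge n}a_j^{-1}$ be large enough to absorb the increase $\sum_{j=m}^{n-1}a_j^{-1}$. In the scaling-operator case of interest the off-diagonal Jacobi coefficients grow like $\sqrt{n}$, so $\sum a_j^{-1}=\infty$ and the tail condition is automatic once $N$ is chosen large. In the degenerate regime $\sum a_j^{-1}<\infty$ one may symmetrise the bump around $m$ and $n$ to use both tails, or alternatively invoke a standard density argument extending the supremum from $c_c(\N)$ to its norm closure in $c_0(\N)$. In every case the constant $\tfrac12$ in \eqref{dist1} is exactly the price paid for the factor $2$ gap in the key norm estimate.
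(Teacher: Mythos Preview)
Your approach is essentially the same as the paper's: both pivot on the two-sided norm estimate \eqref{dist0} and translate it into the distance inequality \eqref{dist1}. The paper's proof is more compressed---it simply records that
\[
d(n,m)=\sup\bigl\{\,|f(n)-f(m)|:\max_j|(f(j)-f(j+1))a_j|\le1\,\bigr\}
\]
(the standard formula for a weighted path metric on $\N$) and then observes that \eqref{dist1} follows immediately by sandwiching with \eqref{dist0}. Your version unpacks this by constructing the test function for the lower bound explicitly.

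That said, your construction carries an unnecessary complication. Requiring $f$ to descend back to zero on $[n,N]$ with slope parameter $\kappa\le1$ forces the tail condition $\sum_{j\ge n}a_j^{-1}\ge\sum_{j=m}^{n-1}a_j^{-1}$, which then calls for the divergence hypothesis or an extra density/symmetrisation argument. A much simpler choice dispenses with all of this: set $f(j)=0$ for $j\ge n$, prescribe increments $f(j+1)-f(j)=\tfrac{1}{2a_j}$ for $m\le j<n$ (so that $f(m)=-\tfrac12 d(n,m)$), and let $f(j)=f(m)$ for $0\le j\le m$. This $f$ lies in $c_c(\N)$ (support contained in $\{0,\dots,n-1\}$), satisfies $\max_j|(f(j+1)-f(j))a_j|\le\tfrac12$ and hence $\|[D,f]\|\le1$ by the right half of \eqref{dist0}, and achieves $|f(n)-f(m)|=\tfrac12 d(n,m)$ with no hypothesis on the tail $\sum a_j^{-1}$. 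This is in effect what the paper is tacitly invoking when it asserts the path-metric formula for $d(n,m)$; your last paragraph can therefore be deleted.
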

\proof By construction one has $a_j>0$ so that the map $\phi$ is strictly increasing and $d(n,m):=\vert \phi(n)-\phi(m)\vert$ defines a distance function on $\N$. The distance $d(n,m)$ is given by 
$$
d(n,m)=\sup \{\vert f(n)-f(m)\vert\mid \max \vert (f(j)-f(j+1))a_j\vert\leq 1\}
$$
and thus \eqref{dist1} follows from \eqref{dist0}. \endproof 
\subsection{Jacobi matrix of the prolate operator of a cyclic pair} \label{an} At the formal level, \ie ignoring the issue of selfadjointness of $D$, an even cyclic pair is determined by the sequence $(a_n)$ which gives the Jacobi matrix of the operator $D$ in the basis $(\xi_n)$ by \eqref{dxij} while no longer assuming that the $a_n$ are real.
$$
D=\left(
\begin{array}{cccccc}
 0 & a_0 & 0 & 0 & 0 & \ldots \\
 \overline a_0 & 0 & a_1 & 0 & 0 & \ldots \\
 0 &   \overline a_1 & 0 & a_2 & 0 & \ldots \\
 0 & 0 &  \overline a_2 & 0 & a_3 & \ldots \\
 0 & 0 & 0 &  \overline a_3 & 0 & \ldots\\
 \ldots & \ldots & \ldots & \ldots & \ldots & \ldots \\
\end{array}
\right)
$$
 One can then determine a  Jacobi matrix representation for the formal prolate operator of \Cref{prolop}. This operator  commutes with the grading $\gamma$ and thus is the direct sum of its restriction to the eigenspaces $\cH^\pm:=\{\xi\in \cH\mid \gamma\xi=\pm \xi\}$. These subspaces are generated by the vectors $\xi_n$ where the parity of $n$ is fixed. 
	Indeed the square of the matrix of $D$ is of the form (for $n=5$)
	$$
	\left(
\begin{array}{cccccc}
 a_0 \overline a_0 & 0 & a_0 a_1 & 0 & 0 & 0 \\
 0 & a_0 \overline a_0+a_1 \overline a_1 & 0 & a_1 a_2 & 0 & 0 \\
\overline a_0 \overline a_1 & 0 & a_1 \overline a_1+a_2 \overline a_2 & 0 & a_2 a_3 & 0 \\
 0 & \overline a_1 \overline a_2 & 0 & a_2 \overline a_2+a_3 \overline a_3 & 0 & a_3 a_4 \\
 0 & 0 & \overline a_2 \overline a_3 & 0 & a_3 \overline a_3+a_4 \overline a_4 & 0 \\
 0 & 0 & 0 &\overline  a_3 \overline a_4 & 0 & \ldots \\
\end{array}
\right)
$$
and its restrictions to $\cH^\pm$ are  Jacobi matrices of the general form
$$
\left(
\begin{array}{cccc}
 a_0 \overline a_0 & a_0 a_1 & 0 & 0 \\
 \overline a_0 \overline a_1  &a_1 \overline a_1+a_2 \overline a_2  & a_2 a_3 & 0 \\
 0 & \overline a_2 \overline a_3 &  a_3 \overline a_3+a_4 \overline a_4 & \ldots \\
 0 & 0 & \overline a_4 \overline a_5 &\ldots \\
\end{array}
\right), \ \ \left(
\begin{array}{cccc}
 a_0 \overline a_0+a_1 \overline a_1 & a_1 a_2 & 0 & 0 \\
\overline a_1 \overline a_2 &  a_2 \overline a_2+a_3 \overline a_3 & a_3 a_4 & 0 \\
 0 & \overline  a_3 \overline a_4 & a_4 \overline a_4+a_5 \overline a_5 & \ldots \\
 0 & 0 & \overline  a_5 \overline a_6 & \ldots\\
\end{array}
\right)
$$ 
We now give the matrix description of the formal prolate operator of \Cref{prolop}.
\begin{prop}\label{jacobprol} The formal prolate operator of \Cref{prolop} restricted to $\cH^\pm$ is represented by the Jacobi matrices $J^\pm$ whose matrix coefficients are $0$ except those  given by 
	\begin{equation}\label{jacobprol1}
		J^+_{n,n+1}=\overline{J^+_{n+1,n}}=-a_{2n}a_{2n+1}, \ \ J^+_{n,n}=-a_{2n}\overline a_{2n} -a_{2n-1}\overline a_{2n-1}+2 n\lambda^2
	\end{equation}
	\begin{equation}\label{jacobprol2}
		J^-_{n,n+1}=\overline{J^-_{n+1,n}}=-a_{2n+1}a_{2n+2}, \ \ J^-_{n,n}=-a_{2n}\overline a_{2n} -a_{2n+1}\overline a_{2n+1}+ (2n+1)\lambda^2
	\end{equation}
\end{prop}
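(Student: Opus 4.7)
The plan is a direct computation using the Jacobi matrix description of $D$ established just before the statement. Since $\gamma = \exp(i\pi N)$ anticommutes with $D$, it commutes with $D^2$, and by definition $\gamma$ commutes with $N$; hence the formal prolate operator $\omega(D,\xi,\lambda) = -D^2+\lambda^2 N$ preserves each of $\cH^\pm$. The eigenspace $\cH^+$ is spanned by the $\xi_{2n}$, $n\geq 0$, and $\cH^-$ by the $\xi_{2n+1}$, $n\geq 0$; I would take these as the orthonormal bases for writing the restricted matrices.

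The first step is to read off from the displayed tridiagonal form of $D$ (equivalently from the action $D\xi_n = a_{n-1}\xi_{n-1}+\bar a_n\xi_{n+1}$ encoded in that matrix) the square $D^2$, which is already displayed in the paragraph preceding the proposition. The nonzero entries are exactly the three ``diagonals''
$$(D^2)_{m,m} = a_{m-1}\overline{a_{m-1}} + a_{m}\overline{a_{m}}, \qquad (D^2)_{m,m+2} = a_m a_{m+1}, \qquad (D^2)_{m+2,m}=\overline{a_m a_{m+1}},$$
with the convention $a_{-1}=0$. Since both $D^2$ and $N$ preserve parity of indices, restricting to $\cH^\pm$ is transparent: one simply selects entries with $m$ even for $J^+$ and $m$ odd for $J^-$, and the two-step off-diagonal of $D^2$ becomes an honest one-step off-diagonal in the reindexed basis $\eta_n=\xi_{2n}$ (resp.\ $\eta_n=\xi_{2n+1}$).

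The second step is to substitute these restricted matrix elements into $-D^2+\lambda^2 N$. On $\cH^+$, the operator $N$ is diagonal with $N\eta_n = 2n\,\eta_n$, producing the diagonal correction $2n\lambda^2$ that appears in \eqref{jacobprol1}; on $\cH^-$, $N\eta_n=(2n+1)\eta_n$ gives the correction $(2n+1)\lambda^2$ of \eqref{jacobprol2}. Assembling the pieces yields exactly the claimed matrix coefficients, with the minus sign in front of the $a$'s coming from the $-D^2$ term.

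There is really no conceptual obstacle: the whole content is the observation that $D^2$ is a pentadiagonal matrix that decouples by parity into two tridiagonal ones, combined with the diagonal action of $N$. The only point requiring mild care is index bookkeeping, especially the boundary case $n=0$ in $J^+$ where the term $a_{2n-1}\bar a_{2n-1}$ must be read with the convention $a_{-1}=0$ so that $J^+_{0,0}=-a_0\bar a_0$, consistently with $(D^2)_{0,0}=a_0\bar a_0$ in the displayed square.
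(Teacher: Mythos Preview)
Your proposal is correct and follows exactly the same approach as the paper: the paper's proof is a one-liner stating that the result follows from Definition~\ref{prolop} together with the already-displayed computation of $D^2$ restricted to $\cH^\pm$, which is precisely the content you have spelled out. Your explicit treatment of the parity decomposition, the reindexing, and the boundary convention $a_{-1}=0$ simply makes the bookkeeping behind that one-liner visible.
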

\proof This follows from   \Cref{prolop} and the computation of the matrix for the restriction of $D^2$ to $\cH^\pm$.\endproof

\section{\htt transform: archimedean place}\label{pso}
This section contains a detailed proof of the following theorem.
\begin{thm}\label{mainstart} Let  $h_{n}$ be the normalized Hermite functions.
\begin{enumerate}
\item[(i)] One has $\cU(h_{0})=2^{-\frac 34}\pi^{-\frac12}L_\infty(\frac 12-is)$ where $L_\infty(z):=\pi^{-z/2}\Gamma(z/2)$ is the Euler factor at the archimedean place.
	\item[(ii)] The functions $\cU(h_{2n})$  are of the form $\cU(h_{2n})(s)=(-1)^n\cP_n(s)\cU(h_{0})(s)$ where the $\cP_n(s)$ are polynomials of the same parity as $n$ and which are orthogonal (and normalized) for the probability measure $dm(s):=(2\pi)^{-\frac 32}\vert \Gamma(\frac 14+ \frac{is}{2})\vert^2ds$ on $\R$. 
	\item[(iii)] The map $\cM:L^2(\R)\to L^2(\R,dm)$, $\cM(f)(s):=\cU(h_{0})^{-1}(s)f(s)$ is a unitary isomorphism. 
	
	\item[(iv)] The  unitary isomorphism $\cM\circ\cU:L^2(\R)^{ev}\to L^2(\R,dm)$ transforms the Hermite functions $h_{2n}$ into the orthogonal polynomials $(-1)^n\cP_n$ and the operator $\scal$ into the multiplication by $s$. 
	
	\item[(v)] Let $N$ be the grading operator  $N(\cP_n):=n\ \cP_n$  in $L^2(\R,dm)$. One has
\begin{equation}\label{wlambdconj}
(\cM\circ\cU)\circ {\bf W}_{\lambda}\circ (\cM\circ\cU)^*=-s^2+2\pi\lambda^2(4N+1)-\frac 14
\end{equation}
	 where $s^2$ is the operator of multiplication by the square of the variable in $L^2(\R,dm)$.  
\end{enumerate}	
\end{thm}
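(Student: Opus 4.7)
The plan is to work through (i)--(v) in order. The main ingredients are: the Mellin--Plancherel theorem, which yields $\cU$ as a unitary from $L^2(\R)^{\rm ev}$ to $L^2(\R)$ intertwining $\scal$ with multiplication by $s$; a direct Gamma-integral evaluation for (i); the oscillator ladder-operator expression of $\scal$ to extract the polynomial structure in (ii)--(iv); and the algebraic identity ${\bf W}_\lambda = \lambda^2(-\partial^2+4\pi^2 q^2) - \scal^2 - \tfrac14$ combined with the harmonic-oscillator spectrum for (v).

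For (i) I would substitute $u=\pi v^2$ in the Mellin integral of $h_0(x)=2^{1/4}e^{-\pi x^2}$:
\[
\cU(h_0)(s) = 2^{1/4}\pi^{-1/2}\int_0^\infty e^{-\pi v^2}v^{-1/2-is}\,dv = 2^{-3/4}\pi^{-1/2}\pi^{-1/4+is/2}\Gamma\bigl(\tfrac14-\tfrac{is}{2}\bigr),
\]
which rewrites as $2^{-3/4}\pi^{-1/2}L_\infty(\tfrac12-is)$. For (ii), the substitution $v=e^t$ reduces $\cU$ to a constant multiple of the additive Fourier transform of $f(e^t)e^{t/2}$, showing that $\cU$ is unitary $L^2(\R)^{\rm ev}\to L^2(\R)$ and $\cU(\scal^k h_0) = s^k\cU(h_0)$. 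Expressing $\scal = -i(q\partial+\tfrac12)$ through the oscillator ladder operators $a,a^\dagger$ gives $\scal = \tfrac{i}{2}((a^\dagger)^2-a^2)$; hence $\scal^k h_0$ is a linear combination of $h_0,h_2,\dots,h_{2k}$ with nonzero coefficient at $h_{2k}$, and triangular inversion produces a polynomial $\cP_n$ of exact degree $n$ with $\cU(h_{2n}) = (-1)^n\cP_n(s)\cU(h_0)(s)$, the overall sign $(-1)^n$ being a convenient normalization. The parity of $\cP_n$ matches that of $n$ once $dm$ is shown in (iii) to be symmetric under $s\mapsto -s$, since orthogonalization of $1,s,s^2,\dots$ against an even measure yields polynomials of definite parity equal to their degree.

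For (iii) I verify directly from (i) that $|\cU(h_0)(s)|^2\,ds = 2^{-3/2}\pi^{-1}|L_\infty(\tfrac12-is)|^2\,ds = (2\pi)^{-3/2}|\Gamma(\tfrac14+\tfrac{is}{2})|^2\,ds = dm(s)$, using $|\pi^{is/2}|=1$ and $\Gamma(\bar z)=\overline{\Gamma(z)}$; hence $\cM$ is an isometric bijection $L^2(\R)\to L^2(\R,dm)$ and $\cV := \cM\cU$ is unitary. Unitarity of $\cV$ together with orthonormality of $\{h_{2n}\}$ in $L^2(\R)^{\rm ev}$ then forces $\{\cP_n\}$ to be orthonormal in $L^2(\R,dm)$, completing (ii). Part (iv) is then an assembly: $\cV$ sends $\{h_{2n}\}$ to $\{(-1)^n\cP_n\}$ and intertwines $\scal$ with multiplication by $s$, since $\cM$ is diagonal and commutes with such multiplication.

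The core is (v). From $-\partial((\lambda^2-q^2)\partial) = -\lambda^2\partial^2 + q^2\partial^2 + 2q\partial$ and $\scal^2 = -(q\partial+\tfrac12)^2 = -(q^2\partial^2+2q\partial+\tfrac14)$ one obtains
\[
{\bf W}_\lambda = \lambda^2\bigl(-\partial^2 + 4\pi^2 q^2\bigr) - \scal^2 - \tfrac14.
\]
In our normalization $(-\partial^2+4\pi^2q^2)h_n = 2\pi(2n+1)h_n$, so on the even subspace this oscillator Hamiltonian has eigenvalue $2\pi(4n+1)$ on $h_{2n}$. Conjugating by $\cV$: $\scal^2$ becomes multiplication by $s^2$ by (iv), while the oscillator Hamiltonian becomes $2\pi\lambda^2(4N+1)$, because by (iv) the vector $h_{2n}$ maps to $(-1)^n\cP_n$, the $n$-th eigenvector of $N$. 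This produces exactly \eqref{wlambdconj}. The main obstacle I expect is the domain/self-adjointness issue: the decomposition above is a priori a formal/symmetric identity on even Schwartz functions, and to upgrade it to an equality of self-adjoint operators I would check the identity on each $h_{2n}$ (where both sides agree by the eigenvalue computation), observe that the right-hand side is self-adjoint with compact resolvent and eigenvalues tending to $+\infty$, and invoke essential self-adjointness of the classical prolate wave operator on the even Schwartz class to conclude that the unique self-adjoint extension on each side is intertwined by $\cV$.
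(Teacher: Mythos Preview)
Your argument is correct and, for parts (i), (iii), (iv), (v), essentially coincides with the paper's: the same Gamma integral for (i), the same identification $|\cU(h_0)|^2\,ds=dm$ for (iii), and the same decomposition ${\bf W}_\lambda=\lambda^2{\bf H}-\scal^2-\tfrac14$ together with ${\bf H}h_{2n}=2\pi(4n+1)h_{2n}$ for (v), which the paper also treats as a formal identity without addressing self-adjoint extensions.

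The one genuine difference is in (ii). You obtain the polynomial factorization $\cU(h_{2n})=(-1)^n\cP_n(s)\,\cU(h_0)(s)$ abstractly, by writing $\scal$ as a combination of $a^2$ and $(a^*)^2$ to see that $\{\scal^k h_0\}_{k\le n}$ and $\{h_{2k}\}_{k\le n}$ span the same flag, and then invoking the intertwining $\cU\scal=s\,\cU$. The paper instead plugs the explicit expansion $h_{2n}(x)=\sum_k c_{n,k}\,\pi^k x^{2k}e^{-\pi x^2}$ into the Mellin integral, evaluates each term as $\tfrac12\pi^{-1/4+is/2}\Gamma(\tfrac14-\tfrac{is}{2}+k)$, and uses the functional equation $\Gamma(z+1)=z\,\Gamma(z)$ to peel off a polynomial in $s$; this yields the closed formula for $\cP_n$ (their equation for $\cP_m(x)$) as a byproduct. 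Your route is cleaner and more conceptual, and makes transparent that the polynomial nature of $\cP_n$ comes purely from the cyclic-pair structure $(\scal,h_0)$; the paper's route is more computational but delivers the explicit coefficients, which it then uses to read off the Jacobi matrix entries $\alpha_n=\tfrac{i}{2}\sqrt{(2n+1)(2n+2)}$.
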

By construction the Hermite functions $h_n$ are themselves of the form $h_n(x)=H_n(x)h_0(x)$ where the $H_n$ are orthogonal (and normalized) polynomials. Since the Fourier transform $\fourier_\mu$ does not preserve the product of functions nor their polynomial nature, \Cref{mainstart} is by no means a consequence of the construction of the Hermite functions. The operator $\cU\circ S\circ \cU^*$ is the multiplication by $s$ while the Hermite operator gives the grading. The main result is that, due to the functional equation $\Gamma(x+1)=x\Gamma(x)$ of the 
$\Gamma$-function one has natural orthogonal polynomials governing the functions $\cU(h_{2n})$ where  the  $h_{n}$ are the normalized Hermite functions. 
\newpage
\subsection{Notation} 
\subsubsection{$\fourierer:L^2(\R)^{ev}\to L^2(\R)^{ev}$} The Fourier transform $\fourierer:L^2(\R)\to L^2(\R)$ 
\begin{equation}\label{Fourier}\fourierer(f)(y):=\int f(x)\exp(-2\pi i xy)dx 
\end{equation}
 restricts to the subspace $L^2(\R)^{ev}\subset L^2(\R)$ of even functions.
\subsubsection{$w_\infty:L^2(\R)^{ev}\to L^2(\R_+^*,d^*u)$} The Haar measure of the multiplicative group $\R_+^*$ of positive real numbers is $d^*u:=du/u$. The map $w_\infty$ is defined by 
\begin{equation}\label{winfty}
w_\infty(f)(u):=u^{1/2}\,f(u) \qqq u\in \R_+^*.
\end{equation}
\subsubsection{$\fourier_\mu:L^2(\R_+^*,d^*u)\to L^2(\R)$} The Fourier transform for the multiplicative group $\R_+^*$ is 
\begin{equation}\label{mufourier}
\fourier_\mu(f)(s):=\int_0^\infty f(u)\,u^{-is}d^*u.
\end{equation}
\subsubsection{$\cU: L^2(\R)^{ev}\to L^2(\R)$} The unitary transformation $ \cU:=\pi^{-\frac12}\ \fourier_\mu\circ w_\infty$ is given by
\begin{equation}\label{uu}
\cU: L^2(\R)^{ev}\to L^2(\R),\ \ \cU(f)(s)=\pi^{-\frac12}\int_0^{\infty} f(v) v^{\frac 12-is}d^*v\,.
\end{equation}
 
\subsection{Canonical form of the scaling operator}
We let $H:=x\partial_x$ and $\scal:= -i(H+\frac 12)$ acting in the Hilbert space $ L^2(\R)^{ev}$ as unbounded operators. By construction $\scal$ is the selfadjoint generator of the scaling unitary transformations 
$$
\left(\exp(it\scal)f\right)(x)=\lambda^{1/2}\,f(\lambda\,x), \ \ \lambda=e^{t/2}\qqq t\in \R.
$$
We let $\xi_\infty=h_0\in L^2(\R)^{ev}$ be the vector of norm $1$ given by 
$$
h_0(x):=2^{\frac 14}\,\exp(-\pi x^2)\qqq x\in \R.
$$
One has 
$$
2^{-\frac 14}\pi^{\frac12}\,\cU(h_{0})(s)=\int_0^{\infty}v^{1/2-is} e^{-\pi  v^2}d^*v=\frac 12 \pi ^{-\frac 14 +i\frac s 2} \Gamma \left(\frac 14 -i\frac s 2\right)
$$

We define the unitary transformation  
\begin{equation}\label{mm}
	\cM:L^2(\R)\to L^2(\R,dm), \ \ \cM(f)(s):=\cU(h_{0})^{-1}(s)f(s)
\end{equation}
where the measure $dm$ is $\vert \cU(h_{0})\vert^2 ds$
\begin{prop}\label{httransfo}\begin{enumerate}
\item[(i)] The unitary transformation $\cV:=\cM\circ \cU:L^2(\R)^{ev}\to L^2(\R,dm)$  gives the canonical form of the cyclic pair  $(\scal,\xi_\infty))$ where $\scal:= -i(H+\frac 12)$ and $\xi_\infty= h_0$.
\item[(ii)] The  cyclic pair  $(\scal,\xi_\infty)$ is even  and the (unique) grading $\gamma$ is equal to the Fourier transform $\fourierer:L^2(\R)^{ev}\to L^2(\R)^{ev}$.
\end{enumerate}
\end{prop}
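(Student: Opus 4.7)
Plan. The proposition essentially repackages the content of \Cref{mainstart} into the language of cyclic pairs from \cref{cyclpair}, so the proof is largely bookkeeping.

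For (i), I would start from three structural observations. First, $\cM$ is unitary from $L^2(\R)$ onto $L^2(\R,dm)$ by the very definition $dm=|\cU(h_0)|^2 ds$, and the composition $\cV(\xi_\infty)=\cM(\cU(h_0))$ evaluates to the constant function $1$ by the definition \eqref{mm} of $\cM$. Second, an integration by parts in the Mellin integral \eqref{uu} shows that $\cU$ conjugates $\scal=-i(H+\tfrac12)$ into multiplication by the variable $s$ on $L^2(\R)$; since $\cM$ is itself a multiplication operator in $s$, the conjugation $\cV\scal\cV^{*}$ is again multiplication by $s$, now acting on $L^2(\R,dm)$. Third, for cyclicity of $\xi_\infty=h_0$ I would invoke \Cref{mainstart}(iv): the image $\cV(h_{2n})=(-1)^n\cP_n$ is a polynomial of exact degree $n$, which yields the equality of linear spans $\mathrm{span}\{\scal^j h_0:j\le N\}=\mathrm{span}\{h_{2n}:n\le N\}$ and reduces cyclicity to the density of the even Hermite functions in $L^2(\R)^{\rm ev}$. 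Applying \Cref{cycpair} then identifies $\cV$ with the canonical form and $dm$ with the spectral measure of the cyclic pair $(\scal,\xi_\infty)$.

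For (ii), the measure $dm(s)=(2\pi)^{-3/2}|\Gamma(\tfrac14+\tfrac{is}{2})|^2 ds$ is invariant under $s\mapsto-s$ because $\Gamma(\overline z)=\overline{\Gamma(z)}$, so \Cref{z2grad} produces a unique $\Z/2$-grading $\gamma=\exp(i\pi N)$. To identify it with $\fourierer$ I would check the three requirements of \cref{z2} directly. The squared Fourier transform is the parity operator $f\mapsto f(-\cdot)$, which restricts to the identity on $L^2(\R)^{\rm ev}$, so $\fourierer^{2}=1$ and $\fourierer=\fourierer^{*}$ on this subspace. The standard intertwinings $\fourierer\,x\,\fourierer^{-1}=\tfrac{i}{2\pi}\partial$ and $\fourierer\,\partial\,\fourierer^{-1}=2\pi iy$ compose to $\fourierer(x\partial)\fourierer^{-1}=-(y\partial_y+1)$, hence $\fourierer(H+\tfrac12)\fourierer^{-1}=-(H+\tfrac12)$ and $\fourierer\,\scal=-\scal\,\fourierer$. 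Finally $\fourierer h_0=h_0$ because the Gaussian $e^{-\pi x^{2}}$ is self-reciprocal under the normalization \eqref{Fourier}. Uniqueness in \Cref{z2grad} then forces $\fourierer=\gamma$; the classical identity $\fourierer h_{2n}=(-1)^n h_{2n}$, matched against $\exp(i\pi N)h_{2n}=(-1)^n h_{2n}$ through \Cref{mainstart}(iv), provides a direct sanity check of the equality $\fourierer=\exp(i\pi N)$.

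The only point deserving care is the cyclicity of $h_0$ for $\scal$ in $L^2(\R)^{\rm ev}$; the route above transports it through $\cV$ into the density of polynomials in $L^2(\R,dm)$, which is secured by the super-exponential Stirling decay of $|\Gamma(\tfrac14+\tfrac{is}{2})|^2$ (the associated Hamburger moment problem is then determinate, so the orthogonal polynomial system $\{\cP_n\}$ is total). Granted this, everything else is a routine assembly of \Cref{cycpair}, \Cref{z2grad}, \Cref{mainstart}, and the three classical properties of $\fourierer$.
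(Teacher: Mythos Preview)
Your core steps match the paper's exactly: the integration-by-parts identity showing $\cU\scal\cU^{*}$ is multiplication by $s$, the observation $\cV(\xi_\infty)=1$ from the definition of $\cM$, and the direct verification that $\fourierer$ is a unitary involution fixing $h_0$ and anticommuting with $\scal$, so that \Cref{z2grad} applies. The paper's proof is in fact terser than yours and does not explicitly address cyclicity at all.

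One organizational point deserves correction: you frame \Cref{httransfo} as a repackaging of \Cref{mainstart} and invoke \Cref{mainstart}(iv) for the cyclicity argument, but in the paper the dependency runs the other way---the section opens by announcing that it contains the detailed proof of \Cref{mainstart}, and \Cref{httransfo} is one of the building blocks. Citing \Cref{mainstart}(iv) here is therefore circular in the paper's logical order. Your final paragraph already contains the self-contained route (super-exponential Stirling decay of $dm$ makes the moment problem determinate, hence polynomials are dense), and that is the argument you should keep; drop the appeal to \Cref{mainstart}.
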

\proof $(i)$~The conjugate of $\scal$ by $\cU$ is the multiplication by the variable $s$ since 
$$
\int_0^{\infty} ((H+\frac 12)f)(v) v^{\frac 12-is}d^*v =\int_0^{\infty} \partial_v(v^{\frac 12}f(v)) v^{-is}dv= is   \int_0^{\infty} f(v) v^{\frac 12-is}d^*v
$$
This remains true when conjugating by $\cV$, while by construction $\cV(\xi_\infty)$ is the constant function $\cV(\xi_\infty)(s)=1$.\newline
$(ii)$~The vector $\xi_\infty= h_{0}$ is invariant under $\fourierer$ and this transformation is unitary of square $1$ and anticommutes with $\scal$, thus the result follows from \cref{z2grad}.
\endproof 
\subsection{Number operator $N$}\label{numberop}
We relate the number operator $N$ of the cyclic pair $(\scal,\xi_\infty)$ with the Hermite differential operator ${\bf H}=-\partial^2+(2 \pi  q)^2$.
\begin{prop}\label{numberN} 
\begin{enumerate}	\item[(i)] The number operator $N$ of the cyclic pair $(\scal,\xi_\infty)$  is given by 
\begin{equation}\label{herm}
	N=\frac{{\bf H}}{8\pi}-\frac 14, \ \ \ \ {\bf H}=-\partial^2+(2 \pi  q)^2
\end{equation}
\item[(ii)] The eigenfunctions of $N$ are the even Hermite functions $h_{2n}\in L^2(\R)^{ev}$ where
\begin{equation}\label{hermfunctions}
	h_{2n}(x)=\sum_0^{n} (-1)^{n-k} 2^{-n+3k+\frac 14}\frac{ ((2n)!)^{\frac12} }{(2k)! (n- k)!} \pi^{ k}x^{2k}e^{-\pi  x^2}
\end{equation}	
\end{enumerate}
\end{prop}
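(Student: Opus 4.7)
The strategy is to identify the canonical orthonormal basis $(\xi_n)$ of the cyclic pair $(\scal,\xi_\infty)$ with the normalized even Hermite basis $(h_{2n})_{n\geq 0}$, from which both (i) and (ii) follow at once.

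First, I would compute the action of $\scal=-i(x\partial_x+\tfrac12)$ on the cyclic vector $\xi_\infty=2^{1/4}e^{-\pi x^2}$: a one-line calculation gives
$$
\scal\,\xi_\infty \;=\; -i\bigl(\tfrac12-2\pi x^2\bigr)\xi_\infty,
$$
and an easy induction shows that $\scal^j\xi_\infty=q_j(x)\,e^{-\pi x^2}$ with $q_j$ an even polynomial of exact degree $2j$. The crucial inductive step uses that the derivative of the Gaussian factor contributes $-2\pi x$, so the highest-degree term $x^{2j}e^{-\pi x^2}$ is sent by $\scal$ to a polynomial of exact degree $2j+2$ times $e^{-\pi x^2}$, with non-zero leading coefficient. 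Consequently the subspace $E_n$ of \Cref{cycpair} coincides with
$$
E_n\;=\;\bigl\{\,p(x)\,e^{-\pi x^2}\,:\,p\text{ even polynomial},\ \deg p\leq 2n\,\bigr\},
$$
which is exactly the linear span of $\{h_0,h_2,\ldots,h_{2n}\}$.

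Since $(h_{2n})$ is an orthonormal basis of $L^2(\R)^{ev}$, the uniqueness up to signs of the orthonormal basis associated to a cyclic pair via Gram--Schmidt forces $\xi_n=\varepsilon_n\,h_{2n}$ for some $\varepsilon_n\in\{\pm 1\}$. These signs are compatible with the $\Z/2$-grading $\gamma=\fourierer$ identified in \cref{httransfo}: since $\gamma\scal=-\scal\gamma$ and $\gamma\xi_\infty=\xi_\infty$, one has $\gamma \scal^j\xi_\infty=(-1)^j\scal^j\xi_\infty$, forcing $\gamma\xi_n=(-1)^n\xi_n$, which matches $\fourierer h_{2n}=(-i)^{2n}h_{2n}=(-1)^n h_{2n}$. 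By definition $N h_{2n}=n\,h_{2n}$, while the classical eigenvalue equation gives $\mathbf{H}\,h_{2n}=2\pi(4n+1)h_{2n}$, so
$$
\Bigl(\tfrac{\mathbf{H}}{8\pi}-\tfrac14\Bigr)h_{2n}\;=\;\Bigl(\tfrac{4n+1}{4}-\tfrac14\Bigr)h_{2n}\;=\;n\,h_{2n}.
$$
The two selfadjoint operators act diagonally in the same orthonormal basis with the same eigenvalues, so they coincide, proving (i).

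For (ii), the explicit expansion of $h_{2n}$ is obtained from the classical Rodriguez formula for Hermite polynomials, rescaled to the Gaussian $e^{-\pi x^2}$ with the chosen normalization $h_0=2^{1/4}e^{-\pi x^2}$; expanding the $2n$-th derivative via the Leibniz rule produces the stated sum, with the binomial coefficients recombining into the $\frac{((2n)!)^{1/2}}{(2k)!\,(n-k)!}$ factors, the powers of $2$ coming from the normalization, and the $(-1)^{n-k}$ signs coming from the alternating signs in Rodriguez's formula. The main technical obstacle in the whole argument is the bookkeeping: ensuring that the inductive degree-raising step is strict (non-vanishing leading coefficient of $q_j$) and that the signs $\varepsilon_n$ can be consistently arranged so that the Gram--Schmidt basis coincides with the Hermite basis itself rather than differing by sporadic sign changes. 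Both points are elementary once set up, but carrying the constants accurately through the Leibniz expansion is the place where care is required.
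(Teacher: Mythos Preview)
Your proposal is correct and follows essentially the same route as the paper: identify $E_n$ with the span of $\{h_0,\dots,h_{2n}\}$ by showing $\scal^j\xi_\infty$ is an even polynomial of exact degree $2j$ times the Gaussian, then read off $N h_{2n}=n\,h_{2n}$ and compare with the Hermite eigenvalues $\mathbf{H}\,h_{2n}=2\pi(4n+1)h_{2n}$. One minor simplification: your concern about the signs $\varepsilon_n$ is unnecessary for (i), since the number operator $N$ depends only on the filtration $(E_n)$ (equivalently, on the one-dimensional subspaces $E_n\ominus E_{n-1}$), not on the phase of the orthonormal vectors $\xi_n$; thus $N h_{2n}=n\,h_{2n}$ holds automatically once $E_n=\mathrm{span}\{h_0,\dots,h_{2n}\}$ is established.
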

\proof The Hermite functions \eqref{hermfunctions} are eigenfunctions for the operator ${\bf H}$  and give the orthonormal basis  of eigenfunctions for the restriction of ${\bf H}$ to $L^2(\R)^{ev}$ and the eigenvalues are $2 \pi (4n+1)$. The subspace $E_n\subset L^2(\R)^{ev}$  linear span of the vectors $\scal^j\xi_\infty$ for $j\leq n$ is formed of the even functions $e^{-\pi  x^2}P(x)$ where $P(x)$ is an even polynomial of degree $\leq 2n$. It coincides with the linear span of the functions $h_{2j}$, $0\leq j\leq n$. Thus one obtains $(i)$ and $(ii)$.\endproof
As a corollary we get equation \eqref{wlambdconj} of  \Cref{mainstart}.
\begin{cor}
\begin{equation}\label{wlambdconj1}
(\cM\circ\cU)\circ {\bf W}_{\lambda}\circ (\cM\circ\cU)^*=-s^2+2\pi\lambda^2(4N+1)-\frac 14
\end{equation}	
\end{cor}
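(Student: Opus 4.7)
The plan is to recognize the identity \eqref{wlambdconj1} as an algebraic rewriting of the prolate operator in terms of the two ingredients whose canonical form we already control: the scaling operator $\scal$ and the Hermite operator $\mathbf H$. Concretely, I would first prove the operator identity
\begin{equation*}
{\bf W}_{\lambda} \;=\; \lambda^{2}\,{\bf H} \;-\; \scal^{2} \;-\; \tfrac{1}{4},
\end{equation*}
working at the level of differential operators on even Schwartz functions, and then transport both sides through the unitary $\cV=\cM\circ\cU$ by appealing to \cref{httransfo} and \cref{numberN}.

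For the first step, I would expand $\scal^{2}=-(q\partial+\tfrac12)^{2}$ using the commutator $[\partial,q]=1$ (equivalently $q\partial\,q\partial=q\partial+q^{2}\partial^{2}$) to get $-\scal^{2}=q^{2}\partial^{2}+2q\partial+\tfrac14$. On the other hand, writing out the definition \eqref{WLambdaq1} of ${\bf W}_{\lambda}$,
\begin{equation*}
{\bf W}_{\lambda}=-(\lambda^{2}-q^{2})\partial^{2}+2q\partial+(2\pi\lambda q)^{2}
=\lambda^{2}\bigl(-\partial^{2}+(2\pi q)^{2}\bigr)+q^{2}\partial^{2}+2q\partial,
\end{equation*}
and identifying the bracket with ${\bf H}$ yields ${\bf W}_{\lambda}=\lambda^{2}{\bf H}+q^{2}\partial^{2}+2q\partial=\lambda^{2}{\bf H}-\scal^{2}-\tfrac14$.

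For the second step, I would invoke the identity $\mathbf H=8\pi(N+\tfrac14)$ from \eqref{herm} to substitute $\lambda^{2}\mathbf H=8\pi\lambda^{2}N+2\pi\lambda^{2}$, giving
\begin{equation*}
{\bf W}_{\lambda}=-\scal^{2}+2\pi\lambda^{2}(4N+1)-\tfrac14.
\end{equation*}
Applying $\cV\cdot\cV^{*}$ term by term, \cref{httransfo}(i) transports $\scal$ to multiplication by $s$ (and hence $\scal^{2}$ to multiplication by $s^{2}$), while by construction $\cV$ sends the Hermite basis $h_{2n}$ to the orthogonal polynomials $(-1)^{n}\cP_{n}$, so it conjugates the number operator of \cref{numberN}(ii) into the grading operator $N(\cP_{n})=n\cP_{n}$ of \Cref{mainstart}(v). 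The constant terms are of course unaffected, and \eqref{wlambdconj1} follows.

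The only real subtlety, rather than an obstacle, is the usual one of domains: the equality holds a priori on the common core of even Schwartz functions, where all three operators $\scal^{2}$, $\mathbf H$ and ${\bf W}_{\lambda}$ are essentially self-adjoint (the $\cP_{n}$ form an orthonormal basis of eigenvectors for the right-hand side with real eigenvalues), so the identity extends uniquely to the self-adjoint closures. Thus the corollary is a direct algebraic consequence of the canonical form established in \cref{httransfo} together with the identification of the number operator in \cref{numberN}.
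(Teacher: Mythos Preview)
Your proof is correct and follows essentially the same approach as the paper: rewrite ${\bf W}_\lambda$ as $\lambda^{2}{\bf H}-\scal^{2}-\tfrac14$ using the explicit differential-operator expansion, substitute ${\bf H}=8\pi(N+\tfrac14)$ from \eqref{herm}, and transport via $\cV$ using \cref{httransfo}. One small slip in your parenthetical remark: the $\cP_n$ are \emph{not} eigenvectors of the right-hand side $-s^{2}+2\pi\lambda^{2}(4N+1)-\tfrac14$, since multiplication by $s^{2}$ is not diagonal in that basis; but this does not affect the argument, which the paper treats at the formal level anyway.
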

\proof One has by \eqref{WLambdaq1}
$$
({\bf W}_{\lambda}\psi)(q) = \,\partial (q^2 \partial \,
\psi(q)) + \lambda^2 \,{\bf H}\,\psi(q) =\left(-\scal^2 + \lambda^2 \,{\bf H}-\frac 14\right)\psi(q)
$$ 
which gives \eqref{wlambdconj1} using \eqref{herm} since $\cV\circ \scal\circ \cV^*$ is given by multiplication by $s$ by \cref{httransfo}.\endproof 
 \subsection{ Jacobi matrix for $\scal$}\label{sectj1}
 This section should be compared, for the formulas of the tridiagonal matrix, with \cite[Section 8]{Osipov}  (formulas based on Hermite series).

We let
\begin{equation}\label{basicpols}
\cP_m(x):=\sqrt{(2 m)!}\sum_0^m (-1)^k 2^{3 k-m}\frac{ \prod _{j=0}^{k-1} \left(j-\frac{i x}{2}+\frac{1}{4}\right)}{(2 k)! (m-k)!}
\end{equation}

\begin{prop}\label{fourierfmuall}
\begin{enumerate}
\item[(i)] The polynomials $\cP_m$ are orthonormal polynomials with respect to the probability  measure 	$(2 \pi)^{-3/2}\vert \Gamma \left(\frac 14 -i\frac {x}{ 2}\right)\vert ^2 dx$ on the  line.
\item[(ii)] The matrix acting on column vectors\footnote{The matrix with entries $\langle x\cP_n,\cP_m\rangle$ is the transposed matrix}  of the operator of multiplication by $x$ in the basis of the $\cP_m$ is the hermitian Jacobi matrix with $0$ on the diagonal and $\alpha_n=\frac{i}{2} \sqrt{(2 n+1) (2 n+2)}$ in the upper  diagonal.
\end{enumerate}
\end{prop}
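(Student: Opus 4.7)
The strategy for (i) is to identify $\cP_m$ with $(-1)^m\cU(h_{2m})/\cU(h_0)$. By \Cref{mainstart}(ii) together with \Cref{httransfo}, the functions $(-1)^m\cU(h_{2m})/\cU(h_0)$ are already known to form an orthonormal basis of $L^2(\R,dm)$, so verifying this identity will give both the polynomial character and the orthonormality of $\cP_m$. Observe that $\|\cU(h_0)\|^2=1$ (since $\cU$ is isometric and $\|h_0\|=1$) combined with $\cU(h_0)(s)=2^{-3/4}\pi^{-3/4+is/2}\Gamma(\tfrac14-is/2)$ yields $|\cU(h_0)(s)|^2=(2\pi)^{-3/2}|\Gamma(\tfrac14-is/2)|^2$, confirming that $dm$ is a probability measure with the density stated.

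To carry out the identification, I plug the explicit expansion \eqref{hermfunctions} of $h_{2m}$ into the Mellin integral \eqref{uu}. Each monomial term $v^{2k}e^{-\pi v^2}$ contributes
\[
\int_0^\infty v^{2k+\frac12-is}e^{-\pi v^2}d^*v=\tfrac12\,\pi^{-k-\frac14+\frac{is}{2}}\Gamma(k+\tfrac14-\tfrac{is}{2}).
\]
Collecting all the prefactors and dividing by $\cU(h_0)(s)$, the gamma-function ratio $\Gamma(k+\tfrac14-\tfrac{is}{2})/\Gamma(\tfrac14-\tfrac{is}{2})$ telescopes into $\prod_{j=0}^{k-1}(j+\tfrac14-\tfrac{is}{2})$ via the functional equation. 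After simplifying the powers of $2$ and $\pi$ and noting that $(-1)^{m-k}=(-1)^{m+k}$, the resulting sum matches $(-1)^m$ times \eqref{basicpols}, establishing $\cU(h_{2m})/\cU(h_0)=(-1)^m\cP_m$, from which (i) follows.

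For (ii), I extract the leading coefficient $k_m$ of $\cP_m$ directly from \eqref{basicpols}: only the $k=m$ term contributes to the monomial $s^m$, giving $k_m=(-1)^m\cdot 2^{2m}\cdot (-i/2)^m/\sqrt{(2m)!}=(2i)^m/\sqrt{(2m)!}$. The three-term recurrence $x\cP_n=M_{n+1,n}\cP_{n+1}+M_{nn}\cP_n+M_{n-1,n}\cP_{n-1}$ for orthonormal polynomials forces $M_{n+1,n}=k_n/k_{n+1}=\tfrac{1}{2i}\sqrt{(2n+1)(2n+2)}=-\alpha_n$. Hermiticity of multiplication by $x$ on $L^2(\R,dm)$ (which is self-adjoint since $x$ is real) gives $M_{n,n+1}=\overline{M_{n+1,n}}=-\overline{\alpha_n}=\alpha_n$, the claimed upper-diagonal entry. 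The diagonal vanishes because $|\cP_n|^2$ is invariant under $s\mapsto -s$ (as $\cP_n(-s)=(-1)^n\cP_n(s)$) and $dm$ is even, so $\int s|\cP_n|^2dm=0$.

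The main obstacle is the bookkeeping in (i): tracking the $2^{3k-m}$, $\pi^{-k-1/4+is/2}$, and $((2m)!)^{1/2}/((2k)!(m-k)!)$ factors, and correctly identifying $(-1)^{m-k}=(-1)^{m+k}$ so that the explicit Hermite-function expansion matches \eqref{basicpols} up to the $(-1)^m$ factor. Once (i) is set up, (ii) is a one-line consequence of comparing leading coefficients; an alternative route via $\scal=\tfrac{i}{2}((a^\dagger)^2-a^2)$ acting on $h_{2n}$ and transported through $\cV$ of \Cref{httransfo} yields the identical answer and could be offered as a cross-check.
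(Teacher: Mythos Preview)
Your argument for (i) is essentially the paper's: compute $\cU(h_{2m})$ term by term via the Mellin integral, use the functional equation of $\Gamma$ to produce the Pochhammer product, and deduce orthonormality from the unitarity of $\cV=\cM\circ\cU$. One caveat: you cite \Cref{mainstart}(ii) as input, but in the paper's logical order that statement is what the present proposition establishes, so the reference is circular. Your actual computation does not rely on it---the orthonormality of $\cU(h_{2m})/\cU(h_0)$ in $L^2(\R,dm)$ follows directly from the unitarity of $\cU$ and of $\cM$ (\Cref{httransfo}) together with the orthonormality of the Hermite functions---so simply drop the forward reference.

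For (ii) you take a mildly different path. The paper works on the Hermite-function side: it reads off the coefficient of $x^{2n+2}e^{-\pi x^2}$ in $(H+\tfrac12)h_{2n}$, compares with the leading term of $h_{2n+2}$, and transports the resulting inner product $\langle (H+\tfrac12)h_{2n},h_{2n+2}\rangle=-\tfrac12\sqrt{(2n+1)(2n+2)}$ through $\cV$ (recalling that $H+\tfrac12$ becomes multiplication by $is$). You instead stay on the polynomial side and extract the leading coefficient $k_m=(2i)^m/\sqrt{(2m)!}$ directly from \eqref{basicpols}, then invoke the standard identity $M_{n+1,n}=k_n/k_{n+1}$ for orthonormal polynomials. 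Both routes are ``compare leading terms'' arguments, just on opposite sides of the transform; yours is arguably cleaner once (i) is in hand, while the paper's makes the link to the scaling operator $\scal=-i(H+\tfrac12)$ more visible. Your parity argument for the vanishing diagonal is correct; note that $\cP_n(-s)=(-1)^n\cP_n(s)$ is a consequence of (i) via \Cref{httransfo}(ii) (or \Cref{z2grad}), not something immediate from \eqref{basicpols}, so you should say where it comes from.
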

\proof $(i)$~The transform $\cV(h_{2m})$ by the unitary $\cV:=\cM\circ \cU:L^2(\R)^{ev}\to L^2(\R,dm)$ is given  by the polynomial $(-1)^m\cP_m$ as one gets using the equality
\begin{equation}\label{gammafunct}
\int_0^{\infty}x^{1/2-is} \pi^{k}x^{2k}e^{-\pi  x^2}d^*x=\frac 12 \pi ^{-\frac 14 +i\frac s 2} \Gamma \left(\frac 14 -i\frac s 2+k\right)
\end{equation}
and the functional equation for $\Gamma(z)$. The polynomial $\cP_0=1$ gives the normalization. The orthonormality then follows from unitarity of $\cV$.\newline
 $(ii)$~Using $\fourier_\mu\circ w$ the operator $H+\frac 12$ corresponds to the multiplication by $ix$, thus it is enough to determine the  coefficient of $x^{2n+2}e^{-\pi  x^2}$ in $(H+\frac 12)h_{2n}$. The term of highest degree in  $h_{2n}$ is 
$$
  2^{2n+\frac 14}(2n)!^{-\frac12}  \pi^{n}x^{2n}e^{-\pi  x^2}
 $$
 When applying the operator $H= x \partial_x$ to $e^{-\pi  x^2} x^k$ one gets 
 $$
H(e^{-\pi  x^2} x^k)= e^{-\pi  x^2} x^k \left(-2 \pi  x^2 +k\right)
 $$
 Thus the coefficient of $x^{2n+2}e^{-\pi  x^2}$ in $(H+\frac 12)h_{2n}$ is $\left(-2 \pi  \right)2^{2n+\frac 14}(2n)!^{-\frac12}  \pi^{n}$ while in $h_{2n+2}$ it is $2^{2n+2+\frac 14}(2n+2)!^{-\frac12}  \pi^{n+1}$. Thus for this highest term one has $(H+\frac 12)h_{2n}\sim c_n h_{2n+2}$ for 
 $$
 c_n=\left(-2 \pi  \right)2^{2n+\frac 14}(2n)!^{-\frac12}  \pi^{n}/\left(2^{2n+2+\frac 14}(2n+2)!^{-\frac12}  \pi^{n+1}\right)=-\frac{1}{2} \sqrt{(2 n+1) (2 n+2)}
 $$
 which gives, using the orthonormal property of the $h_n$, the equality 
 \begin{equation}\label{basiceq}
 \langle (H+\frac 12)h_{2n},h_{2n+2}\rangle=-\frac{1}{2} \sqrt{(2 n+1) (2 n+2)}
\end{equation}
 To check the signs one takes the simplest case, one has
 $$
 ((H+\frac 12)h_{0})(x)=(2^{\frac 14-1}-2^{\frac 14+1}\pi x^2)e^{-\pi  x^2}, \ \ h_2(x)=\frac{4 \pi  x^2-1}{\sqrt[4]{2}}
 $$
 so that $(H+\frac 12)h_{0}=-\frac{1}{\sqrt{2}}h_2$. For $m=1$ one has 
 $
 \cP_1(x)=i \sqrt{2}\ x
 $. 
 Thus $x\cP_0=-\frac{i}{\sqrt{2}}\cP_1$. From \eqref{basiceq} we thus obtain that the matrix of the multiplication by $x$, acting on column vectors is of the form
$$
\left(
\begin{array}{cccccc}
 0 & \frac{i}{\sqrt{2}} & 0 & 0 & 0 & 0 \\
 -\frac{i}{\sqrt{2}} & 0 & i \sqrt{3} & 0 & 0 & 0 \\
 0 & -i \sqrt{3} & 0 & i \sqrt{\frac{15}{2}} & 0 & 0 \\
 0 & 0 & -i \sqrt{\frac{15}{2}} & 0 & i \sqrt{14} & 0 \\
 0 & 0 & 0 & -i \sqrt{14} & 0 & 3 i \sqrt{\frac{5}{2}} \\
 0 & 0 & 0 & 0 & -3 i \sqrt{\frac{5}{2}} & 0 \\
\end{array}
\right)
$$
\ie that one has for all $n$ 
 \begin{equation}\label{basiceq1}
 x\cP_n=\overline{\alpha_n}\,\cP_{n+1}+\alpha_{n-1}\cP_{n-1}, \ \ \   \alpha_n=\frac{i}{2} \sqrt{(2 n+1) (2 n+2)}
\end{equation}
so that the component of $x\cP_n$ on $\cP_{n+1}$ is $-\frac{i}{2} \sqrt{(2 n+1) (2 n+2)}$.
\endproof 
We show in  \cref{metric1} below that for the cyclic pair corresponding to the archimedean place, the spectral distance is equivalent to the metric induced on $\N^*\subset \R_+^*$ by the invariant metric of the Lie group $\R_+^*$.
\begin{prop}\label{metric1} The spectral metric for the cyclic pair $(\scal,\xi_\infty)$ of  \cref{httransfo} is equivalent to the metric induced on $\N^*\subset \R_+^*$ by the invariant metric of the Lie group $\R_+^*$.	
\end{prop}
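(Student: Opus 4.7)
The strategy is to apply \cref{dist} directly to the cyclic pair $(\scal,\xi_\infty)$ and then reduce the statement to an elementary asymptotic estimate of a single scalar sum.

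First, by \cref{dist} the spectral distance associated with $(\scal,\xi_\infty)$ is equivalent, within a factor of two, to $d(n,m)=|\phi(n)-\phi(m)|$ with $\phi(n)=\sum_{j=0}^{n}|a_j|^{-1}$, where $a_j$ denotes the off-diagonal Jacobi coefficient of $\scal$ in the orthonormal basis adapted to the cyclic pair. From \cref{fourierfmuall} I read off these coefficients: in the rephased basis $\eta_n:=(-i)^n\cP_n$ the off-diagonal entries become the positive reals
\[
|a_n|=\tfrac12\sqrt{(2n+1)(2n+2)}.
\]
Alternatively, no rephasing is needed since only the moduli enter the inequality \eqref{dist0} used in the proof of \cref{dist}.

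Second, I use the elementary sandwich $2j+1\le\sqrt{(2j+1)(2j+2)}\le 2j+2$ to bound
\[
\frac{1}{j+1}\ \le\ |a_j|^{-1}\ \le\ \frac{2}{2j+1}.
\]
Summing from $0$ to $n$ and comparing with the corresponding integrals yields $\phi(n)=\log n+O(1)$, and more precisely, for $n\ge m\ge 1$,
\[
\phi(n)-\phi(m)\ =\ \log(n/m)+O(1/m).
\]
Hence $|\phi(n)-\phi(m)|$ is bi-Lipschitz equivalent, with constants uniform in $n,m\in\N^*$, to $|\log n-\log m|$.

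Finally, the (essentially unique) invariant Riemannian structure on the Lie group $\R_+^*$ has line element $du/u$, producing the distance $|\log u-\log v|$; its restriction to $\N^*\hookrightarrow\R_+^*$ is $|\log n-\log m|$. Combining this identification with the two previous steps gives the claimed equivalence of metrics. The only mildly delicate point, rather than a genuine obstacle, is the uniform control of the multiplicative constants for small $n,m$, which is handled by retaining explicit rather than merely asymptotic constants in the sandwich bounds above.
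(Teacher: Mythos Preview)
Your proof is correct and follows essentially the same route as the paper: invoke \cref{dist} to reduce to $d(n,m)=|\phi(n)-\phi(m)|$, read off $|a_n|=\tfrac12\sqrt{(2n+1)(2n+2)}$ from \cref{fourierfmuall}, and compare $|a_n|^{-1}$ with a term that sums to $\log$. The only cosmetic difference is that the paper compares $a_n^{-1}$ directly with the telescoping increment $\log(1+\tfrac1n)$ via the inequality $1\le a_n\log(1+\tfrac1n)\le\sqrt3\log2$, which yields the bi-Lipschitz constants immediately without the integral-comparison step and sidesteps the ``mildly delicate'' small-$m$ issue you flag.
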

\proof 
The sequence $(a_n)$ of \eqref{dxij}, obtained after a change of phase  in the orthonormal basis, is given by $a_n=\sqrt{(n+\frac 12)(n+1)}$. For $n>0$ one has 
$$
1\leq a_n \, \log(1+\frac 1n)\leq \sqrt 3 \log 2
$$
which shows that the metric induced on $\N^*\subset \R_+^*$ by the invariant metric of the Lie group $\R_+^*$ is equivalent to the metric $d(n,m):=\vert \phi(n)-\phi(m)\vert$ of  \cref{dist}. In turns this latter metric is equivalent to the spectral metric by  \cref{dist}.\endproof 
\subsection{Jacobi matrix of the prolate operator} \label{jacofan}
We now determine the two Jacobi matrices given by the restrictions ${\bf{W}}_\lambda^\pm$ of the prolate operator \eqref{wlambda} to the even and odd subspaces $\cH^\pm$ as in \cref{jacobprol}.  
In fact one could compute directly these Jacobi matrices in the basis of Hermite functions  and compare with \cite[Section 8]{Osipov}  (formulas based on Hermite series). \newline
The Jacobi matrix ${\bf{W}}_\lambda^+$ is of the following form
$$
\left(
\begin{array}{ccccc}
 2 \pi  \lambda ^2-\frac{3}{4} & \sqrt{\frac{3}{2}} & 0 & 0 & 0 \\
 \sqrt{\frac{3}{2}} & 18 \pi  \lambda ^2-\frac{43}{4} & \sqrt{105} & 0 & 0 \\
 0 & \sqrt{105} & 34 \pi  \lambda ^2-\frac{147}{4} & 3 \sqrt{\frac{165}{2}} & 0 \\
 0 & 0 & 3 \sqrt{\frac{165}{2}} & 50 \pi  \lambda ^2-\frac{315}{4} & \sqrt{2730} \\
 0 & 0 & 0 & \sqrt{2730} & 66 \pi  \lambda ^2-\frac{547}{4} \\
\end{array}
\right)
$$
For a Jacobi matrix $J$, we use the notation $a_n$, $n\geq 0$ for the coefficients $J_{n,n+1}=\overline{J_{n+1,n}}$, and $b_n$, $n\geq 0$ for the diagonal, \ie $b_n=J_{n,n}$.
\begin{prop}\label{matrixjj}	
\label{explicitjac}
The  coefficients of the Jacobi matrix of the prolate operator ${\bf{W}}_\lambda^+$ are given by 
\begin{equation}\label{explicitjac1}
a_n=\frac 14\left((4 n+1)(4n+2) (4 n+3)(4n+4)\right)^{1/2},\ \ 
 b_n=-8 n^2-2 n-\frac{3}{4}+2 \pi  \lambda ^2(8n+1) \end{equation}
In the odd case, \ie for ${\bf{W}}_\lambda^-$, they are 
\begin{equation}\label{explicitjac2}
a_n=\frac 14\left((4 n+3)(4n+4) (4 n+5)(4n+6)\right)^{1/2}, \ \ 
b_n=-8 n^2-10 n-\frac{15}{4}+2 \pi  \lambda ^2(8n+5) \end{equation}
\end{prop}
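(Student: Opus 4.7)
The plan is to combine Proposition \ref{jacobprol}, which expresses the Jacobi matrix of the formal prolate operator $\omega(D,\xi,\lambda)=-D^2+\lambda^2 N$ of an even cyclic pair in terms of the sequence $(a_n)$ of \eqref{dxij}, with the explicit cyclic-pair data for $(\scal,\xi_\infty)$ supplied by Proposition \ref{fourierfmuall}.

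First, I identify ${\bf W}_\lambda$ as an instance of the formal prolate construction of Definition \ref{prolop} up to a rescaling and an additive scalar. Expanding \eqref{wlambda} gives
\[
{\bf W}_\lambda = -\scal^2 + 8\pi\lambda^2\,N + \Bigl(2\pi\lambda^2 - \tfrac{1}{4}\Bigr),
\]
so that ${\bf W}_\lambda = \omega(\scal,\xi_\infty,\tilde\lambda) + \bigl(2\pi\lambda^2-\tfrac{1}{4}\bigr)$ with $\tilde\lambda^2 = 8\pi\lambda^2$. The scalar shift contributes uniformly to the diagonal, so the formulas \eqref{jacobprol1} and \eqref{jacobprol2} can be used verbatim after replacing $\lambda^2$ by $8\pi\lambda^2$ and adding $2\pi\lambda^2-\tfrac{1}{4}$ to each $J^\pm_{n,n}$.

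Second, I read off the sequence $(a_n)$ of the cyclic pair from \eqref{basiceq1}: comparison with \eqref{dxij} gives $a_n = \overline{\alpha_n} = -\tfrac{i}{2}\sqrt{(2n+1)(2n+2)}$, and hence $|a_n|^2 = \tfrac{1}{2}(2n+1)(n+1)$. Plugging into the off-diagonal formulas of Proposition \ref{jacobprol} immediately produces
\[
-a_{2n}a_{2n+1} \;=\; \tfrac{1}{4}\sqrt{(4n+1)(4n+2)(4n+3)(4n+4)},
\]
together with the analogous expression for $-a_{2n+1}a_{2n+2}$ in the odd case; these match the asserted off-diagonal coefficients in \eqref{explicitjac1} and \eqref{explicitjac2}.

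Third, for the diagonal I insert the values of $|a_k|^2$ into the sums $|a_{2n}|^2+|a_{2n-1}|^2$ (even case) and $|a_{2n+1}|^2+|a_{2n}|^2$ (odd case); a routine expansion collapses them to $8n^2+2n+\tfrac{1}{2}$ and $8n^2+10n+\tfrac{7}{2}$ respectively. Combining these with the rescaled grading terms $8\pi\lambda^2\cdot 2n$ and $8\pi\lambda^2\cdot(2n+1)$ and the constant shift $2\pi\lambda^2-\tfrac{1}{4}$ yields the claimed $b_n$. The statement is therefore a direct unfolding of Proposition \ref{jacobprol} using the explicit cyclic-pair data of \cref{sectj1}; there is no real obstacle beyond bookkeeping, the only subtlety being the boundary convention $a_{-1}=0$ at $n=0$, which is already implicit in Proposition \ref{jacobprol}.
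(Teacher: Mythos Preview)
Your proof is correct and follows exactly the route sketched in the paper: apply Proposition~\ref{jacobprol} to the cyclic pair $(\scal,\xi_\infty)$ using the coefficients $\alpha_n=\tfrac{i}{2}\sqrt{(2n+1)(2n+2)}$ from Proposition~\ref{fourierfmuall}, after recognizing via \eqref{wlambda} (equivalently \eqref{wlambdconj1}) that ${\bf W}_\lambda=-\scal^2+8\pi\lambda^2 N+(2\pi\lambda^2-\tfrac14)$. Your identification $a_n=\overline{\alpha_n}$ versus $a_n=\alpha_n$ is a harmless phase ambiguity in the paper's conventions; either choice yields the same $|a_n|^2$ and the same products $a_{2n}a_{2n+1}$, so the computation goes through unchanged.
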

\proof This follows from \eqref{wlambdconj1} combined with  \cref{jacobprol}  applied to the coefficients 
$\alpha_n=\frac{i}{2} \sqrt{(2 n+1) (2 n+2)}$ determined in  \cref{fourierfmuall}.\endproof 
\subsection{Link with the map $\cE$ and zeta}\label{sectmapE}
In \cite{CCVJ} we exhibited minuscule eigenvalues of the quadratic form associated to the Weil explicit formulas restricted to test functions  supported in a fixed compact interval. The corresponding eigenfunctions were constructed using the image through the map $\cE$ of the positive spectrum of the prolate operator ${\bf W}_{\lambda}$.  We used these functions to condition the canonical spectral triple of the circle  in such a way that they belong to the kernel of the perturbed Dirac operator and obtained the low lying  zeros of the Riemann zeta function. 
For $\lambda\to \infty$ the  eigenfunctions  for positive eigenvalues of  ${\bf W}_{\lambda}$  are approximated by the eigenfunctions of  the operator ${\bf H}=-\partial^2+(2 \pi  q)^2$. The latter eigenfunctions are the Hermite functions $\{h_{2n}\}$.\newline  Following   \cite[Proposition 2.24]{CMbook} we now explain in which sense the conditioning of the scaling operator by  $\cE(\{h_{2n}\})$, performed by taking a quotient (in place of an orthogonal complement as above) relates to the zeros of zeta.

Let $\cS(\R)_0^{\rm ev}$ be the subspace of the even part of the Schwartz space $\cS(\R)$ obtained by imposing the two conditions $f(0)=\widehat f(0)=0$. We describe simple linear combinations of the functions $h_{2m}$ which belong to $\cS(\R)_0^{\rm ev}$ and then compute their image under the map $\fourier_\mu\circ\cE$, where 
$$
\cE:=w_\infty \circ \Sigma, \ \ \Sigma(f)(x):=\sum_\N f(nx).
$$
The Fourier transform $\fourier_{e_\R}(h_{2m})$ is  $(-1)^m h_{2m}$ and thus the two conditions $f(0)=\widehat f(0)=0$ are fulfilled by the two families of functions 
\begin{equation}\label{evenoddfamily}
\psi^+_\ell:=h_{4\ell}-\frac{h_{4\ell}(0)}{h_0(0)}h_0, \ \ \psi^-_\ell:=-h_{4\ell+2}+\frac{h_{4\ell+2}(0)}{h_2(0)} h_2
\end{equation}
One has $
h_{2n}(0)=(-1)^n \frac{ 2^{\frac{1}{4}-n} \sqrt{(2 n)!}}{n!}
 $. Thus one gets 
\begin{equation}\label{psileven}
\psi^+_\ell(x)=\sum_1^{2\ell} (-1)^k 2^{-2\ell+3k+\frac 14}\frac{ ((4\ell)!)^{\frac12} }{(2\ell- k)! (2k)!} \pi^{k}x^{2k}e^{-\pi  x^2}
\end{equation}
In the odd case one has $h_2(x)/h_2(0)=e^{-\pi  x^2} \left(4 \pi  x^2-1\right)$.  The coefficient of $\pi x^2 e^{-\pi  x^2}((4\ell+2)!)^{\frac12}$ in $h_{4\ell+2}-\frac{h_{4\ell+2}(0)}{h_2(0)} h_2$ is 
$$
 2^{3 -2 \ell-1+\frac 14}\frac{ 1}{(2\ell)! 2!}-4\times 2^{-2 \ell-1+\frac 14}\frac{ 1}{(2\ell+1)! }=2^{-2 \ell+\frac 14}\frac{ 4\ell }{(2\ell+1)! }
$$ 
thus one has \begin{small}
\begin{equation}\label{psilodd}
\psi^-_\ell(x)=\left(-\frac{  \ell\, 2^{-2 \ell+\frac 94} ((4\ell+2)!)^{\frac12}}{(2 \ell+1)!}\pi x^2+\sum_2^{2\ell+1} (-1)^{k} 2^{3 k-2 \ell-1+\frac 14} \frac{((4\ell+2)!)^{\frac12}}{ (2 \ell+1-k)!(2 k)!} \pi^{k}x^{2k}\right)e^{-\pi  x^2}
\end{equation}
\end{small}
We now relate the Fourier transform   $\fourier_\mu(w_\infty(\psi_\ell^\pm))$ to the polynomials \eqref{basicpols}.
 \begin{lem}\label{fourierfmu}
 \begin{enumerate} \item[(i)] The Fourier transform   $\fourier_\mu(w_\infty(\psi_\ell^\pm))$  is equal to 
the product of the archimedean local factor $\pi ^{-\frac{z}{2}} \Gamma \left(\frac{z}{2}\right)$ at $z=\frac 12 -is$,  by the following polynomials 
\begin{equation}\label{basicpolyeven}
P^+_\ell(s):=\sum_1^{2\ell} (-1)^k 2^{-2\ell+3k-\frac 34}\frac{ ((4\ell)!)^{\frac12} }{(2\ell- k)! (2k)!}\prod_0^{k-1}  \left(j-\frac{i\, s}{2}+\frac{1}{4}\right)
\end{equation}
which is even, divisible by $\frac 14+s^2$, and with real coefficients and
$$
P^-_\ell(s):=-\frac{  \ell \,2^{-2 \ell+\frac 54} ((4\ell+2)!)^{\frac12}}{(2 \ell+1)!}\left(-\frac{i s}{2}+\frac{1}{4}\right)+
$$
\begin{equation}\label{basicpolysodd}
+\sum_2^{2\ell+1} (-1)^{k} 2^{3 k-2 \ell-\frac 74} \frac{((4\ell+2)!)^{\frac12}}{ (2 \ell+1-k)!(2 k)!} \prod_0^{k-1}  \left(j-\frac{i s}{2}+\frac{1}{4}\right)
\end{equation}
which is odd, divisible by $\frac 14+s^2$, and with purely imaginary coefficients.
\item[(ii)] One has 
\begin{equation}\label{basicpolrelate}
P^+_\ell(s)=2^{-\frac 34}\left(\cP_{2\ell }(s)-\cP_{2\ell }(\frac i2)\right), \ \ P^-_\ell(s)=2^{-\frac 34}\left( \cP_{2\ell +1}(s)+2i\, s\,\cP_{2\ell+1 }(\frac i2)\right)
\end{equation}
	\end{enumerate}
\end{lem}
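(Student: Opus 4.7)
The plan is to compute $\fourier_\mu(w_\infty(\psi_\ell^\pm))(s)=\int_0^\infty \psi_\ell^\pm(u)\,u^{1/2-is}\,d^*u$ termwise from the explicit expansions \eqref{psileven} and \eqref{psilodd}, then to identify the resulting polynomial with the appropriate translate of $\cP_{2\ell}$ or $\cP_{2\ell+1}$. First I would apply the basic Gamma integral \eqref{gammafunct} to each summand $\pi^{k}u^{2k}e^{-\pi u^{2}}$ to obtain $\tfrac12\,\pi^{-1/4+is/2}\,\Gamma(\tfrac14-\tfrac{is}{2}+k)$, then iterate the functional equation $\Gamma(w+1)=w\Gamma(w)$ exactly $k$ times to factor $\Gamma(\tfrac14-\tfrac{is}{2}+k)=\Gamma(\tfrac14-\tfrac{is}{2})\prod_{j=0}^{k-1}(j+\tfrac14-\tfrac{is}{2})$. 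Setting $z=\tfrac12-is$ pulls out the common prefactor $\tfrac12\,\pi^{-z/2}\Gamma(z/2)$, and absorbing the $\tfrac12$ into the combinatorial coefficient (so $2^{-2\ell+3k+1/4}\cdot\tfrac12=2^{-2\ell+3k-3/4}$, and similarly in the odd case) reproduces \eqref{basicpolyeven} and \eqref{basicpolysodd} exactly.

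Next I would prove part (ii) by comparing \eqref{basicpolyeven} and \eqref{basicpolysodd} to \eqref{basicpols} at $m=2\ell$ and $m=2\ell+1$ respectively. In the even case, $P^+_\ell(s)$ is visibly $2^{-3/4}\cP_{2\ell}(s)$ with the $k=0$ summand removed, so it suffices to evaluate $\cP_{2\ell}(i/2)$ and check that it equals that missing constant. Plugging $s=i/2$ into \eqref{basicpols} gives the clean identity $j+\tfrac14-\tfrac{is}{2}=j+\tfrac12$, hence $\prod_{j=0}^{k-1}(j+\tfrac12)=(2k)!/(4^{k}k!)$, and the whole sum collapses by the binomial identity $\sum_{k=0}^{m}\binom{m}{k}(-2)^{k}=(-1)^{m}$ to the closed form $\cP_{m}(i/2)=(-1)^{m}\,2^{-m}\sqrt{(2m)!}/m!$. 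Matching with the removed $k=0$ coefficient establishes the first identity in \eqref{basicpolrelate}. For the odd case I would expand $2^{-3/4}\cP_{2\ell+1}(s)$ and subtract the first two terms appearing in \eqref{basicpolysodd}, then verify, using the same binomial collapse, that the residual equals $2^{-3/4}\cdot 2is\,\cP_{2\ell+1}(i/2)$.

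Given (ii) the remaining assertions in (i) are immediate. Parity: $\cP_{2\ell}$ is even and $\cP_{2\ell}(i/2)$ is a scalar, so $P^+_\ell$ is even; $\cP_{2\ell+1}$ is odd and $2is\,\cP_{2\ell+1}(i/2)$ is odd, so $P^-_\ell$ is odd. Reality of coefficients: each factor $j+\tfrac14-\tfrac{is}{2}$ contributes a real constant plus $(-i/2)s$, so after expansion the coefficient of $s^{2r}$ is real and that of $s^{2r+1}$ purely imaginary; combined with the established parity one gets real coefficients for $P^+_\ell$ and purely imaginary coefficients for $P^-_\ell$ (and in passing $\cP_{2\ell+1}(i/2)\in\R$, so that $2is\,\cP_{2\ell+1}(i/2)$ remains purely imaginary). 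Divisibility by $\tfrac14+s^{2}$ is forced by the two defining conditions $\psi_\ell^\pm(0)=\widehat{\psi_\ell^\pm}(0)=0$: at $s=i/2$ we have $z=1$ and $\pi^{-1/2}\Gamma(1/2)=1$, while the left-hand side reduces to $\int_{0}^{\infty}\psi_\ell^\pm(u)\,du=\tfrac12\widehat{\psi_\ell^\pm}(0)=0$, whence $P^\pm_\ell(i/2)=0$; at $s=-i/2$ the integral $\int_{0}^{\infty}\psi_\ell^\pm(u)\,d^*u$ converges because $\psi_\ell^\pm(u)=O(u^{2})$ near $0$ (the constant and $x^{2}$ terms having been eliminated by the choice \eqref{evenoddfamily}), whereas $\Gamma(z/2)$ has a simple pole, forcing $P^\pm_\ell(-i/2)=0$.

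The only real obstacle is bookkeeping: tracking the three different normalizations (powers of $2$, square roots of factorials, and summation ranges) that appear simultaneously in \eqref{basicpols}, \eqref{psileven} and \eqref{psilodd}. Once the closed-form evaluation $\cP_{m}(i/2)=(-1)^{m}\,2^{-m}\sqrt{(2m)!}/m!$ is in hand, the identification \eqref{basicpolrelate} and every property stated in (i) follow by inspection; there is no conceptual difficulty beyond this.
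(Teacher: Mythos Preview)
Your proof is correct and the core computation---applying \eqref{gammafunct} termwise to \eqref{psileven} and \eqref{psilodd} and factoring out $\pi^{-z/2}\Gamma(z/2)$ via the functional equation---is exactly what the paper does. Where you diverge is in the logic surrounding part (ii). The paper first establishes parity and divisibility by $\tfrac14+s^2$ directly (vanishing at $s=i/2$ from $\widehat\psi(0)=0$, then the second root from parity), and only afterwards deduces \eqref{basicpolrelate} by the one-line observation that the right-hand side is the \emph{unique} parity-preserving correction of $2^{-3/4}\cP_m$ that makes it divisible by $\tfrac14+s^2$. You instead prove \eqref{basicpolrelate} first, by explicitly evaluating $\cP_m(i/2)=(-1)^m 2^{-m}\sqrt{(2m)!}/m!$ through the binomial collapse $\sum_k\binom{m}{k}(-2)^k=(-1)^m$, and then read off parity, reality of coefficients, and divisibility from the resulting identity. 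Both orderings are valid; yours gives the bonus of an explicit closed form for $\cP_m(i/2)$, while the paper's uniqueness argument is shorter and avoids any coefficient matching in the odd case.

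One small slip: in your argument for $P^\pm_\ell(-i/2)=0$ you write that ``the constant and $x^2$ terms'' have been eliminated. In fact only the constant term is killed by the subtraction in \eqref{evenoddfamily}; both \eqref{psileven} and \eqref{psilodd} visibly start at $x^2$. This is harmless---vanishing of the constant already gives $\psi_\ell^\pm(u)=O(u^2)$ for even functions, which is all you need for convergence of $\int_0^\infty\psi_\ell^\pm(u)\,d^*u$---but the parenthetical should be corrected. Alternatively, you can bypass the $s=-i/2$ analysis entirely by invoking parity, as the paper does.
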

\proof $(i)$~The formulas \eqref{basicpolyeven} and \eqref{basicpolysodd} follow directly from \eqref{psileven} and \eqref{psilodd} together with \eqref{gammafunct}. The  divisibility by $\frac 14+s^2$ follows from the equality
$$
\fourier_\mu(w(\psi_\ell^\pm))(\frac i2)=\int_0^\infty x\,\psi_\ell^\pm(x)d^*x=\frac 12 \int_{-\infty}^\infty \psi_\ell^\pm(x)dx=0
$$
It follows from \eqref{psileven} and \eqref{psilodd} that $P^\pm_\ell(i/2)=0$ and the parity of the polynomial $P^\pm_\ell$ then shows that it is divisible by $\frac 14+s^2$. \newline
$(ii)$~This follows since \eqref{basicpolrelate} uniquely specifies the required correction of the rescaled polynomials $\cP_m$ to make them divisible by $\frac 14+s^2$ without altering their parity.\endproof 
 The Riemann-Landau $\Xi$ function
\begin{equation}\label{xifunct}
\Xi(s)=\frac{z(z-1)}{2} \Gamma(z/2) \pi^{-z/2} \zeta(z)\,,\quad
z=\frac 12 + is
\end{equation}
 is entire, of Hadamard order one, even and real-valued for real $s$, and one has
 $$
\Xi(s)=\Xi(0)\,  \prod \left( 1-\frac{s^2}{\alpha^2}\right), \ \ \Xi(0)=-\frac{\zeta \left(\frac{1}{2}\right) \Gamma \left(\frac{1}{4}\right)}{8 \sqrt[4]{\pi }}\sim 0.497121
$$ 
where $\frac 12+i\alpha$ runs through the  zeros of zeta with  positive imaginary part.\newline
Let then $P^\pm_\ell(s)=-\frac 12 (\frac 14+s^2)R^\pm_\ell(s)$ define the polynomials $R^\pm_\ell(s)$.
The next proposition shows that these polynomials give the factorization of $\fourier_\mu(\cE(\psi^\pm_\ell))$ as a multiple of the $\Xi$ function, and that all polynomial multiples of $\Xi$ are obtained in this way.
We let $\cH_{\leq 1}$ be the Hadamard topological ring of entire functions of order $\leq 1$.
\begin{prop}\label{prope}\begin{enumerate} \item[(i)] One has the equality 
$$
\fourier_\mu(\cE(\psi^\pm_\ell))(s)=R^\pm_\ell(s)\, \Xi(s)
$$
\item[(ii)] The spectrum of the operator of multiplication by $s$ in the quotient of $\cH_{\leq 1}$  by the  closure of the subspace spanned by the  $\fourier_\mu(\cE(\psi^\pm_\ell))$ is the set of $s\in \C$ such that $\frac 12 +is$ is a non-trivial zero of zeta.
\end{enumerate}
\end{prop}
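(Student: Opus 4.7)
\emph{Part (i).} The starting point is a Mellin-type identity relating $\cE$ to $\zeta$. Since $\cE(f)(x)=x^{1/2}\sum_{n\geq 1}f(nx)$, the substitution $y=nx$ (with $d^*y=d^*x$) gives
\[
\fourier_\mu(\cE(f))(s)=\sum_{n\geq 1}\int_0^\infty x^{1/2-is}f(nx)\,d^*x=\Big(\sum_{n\geq 1} n^{is-1/2}\Big)\fourier_\mu(w_\infty f)(s)=\zeta(\tfrac12-is)\,\fourier_\mu(w_\infty f)(s).
\]
Applying this to $f=\psi^\pm_\ell$ and inserting the formula of \cref{fourierfmu} yields
$\fourier_\mu(\cE(\psi^\pm_\ell))(s)=\zeta(\tfrac12-is)P^\pm_\ell(s)L_\infty(\tfrac12-is)$. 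From the definition of $\Xi$ combined with its evenness, substituting $z=\tfrac12-is$ gives $\frac{z(z-1)}{2}=-\frac{1/4+s^2}{2}$, so
\[
L_\infty(\tfrac12-is)\zeta(\tfrac12-is)=-\frac{2\,\Xi(s)}{1/4+s^2}.
\]
Combined with $P^\pm_\ell(s)=-\tfrac12(1/4+s^2)R^\pm_\ell(s)$, the $(1/4+s^2)$ factors cancel and the claimed identity $\fourier_\mu(\cE(\psi^\pm_\ell))(s)=R^\pm_\ell(s)\,\Xi(s)$ follows.

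\emph{Part (ii), density step.} First, I would check that $\{R^+_\ell,R^-_\ell\}_{\ell\geq 1}$ spans $\C[s]$. By \cref{fourierfmu}(ii), $P^+_\ell$ is a correction of $\cP_{2\ell}$ by a constant and $P^-_\ell$ a correction of $\cP_{2\ell+1}$ by a linear term; dividing by $1/4+s^2$, the $R^+_\ell$ are polynomials of degree $2\ell-2$ of definite (even) parity, and the $R^-_\ell$ are of degree $2\ell-1$ and odd. Leading terms are non-zero because those of the $\cP_n$ are, so the degree-by-degree triangularity gives linear independence and the sought-for spanning property. Thus $N:=\mathrm{span}\{R^\pm_\ell \Xi\}=\Xi\cdot\C[s]$, and (ii) becomes a statement about the spectrum of $M_s$ on $\cH_{\leq 1}/\overline{\Xi\cdot\C[s]}$.

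\emph{Part (ii), identification of $\overline{N}$.} The main technical step is to show
\[
\overline{\Xi\cdot\C[s]}=\{g\in\cH_{\leq 1}:g\ \text{vanishes on the zero set of}\ \Xi\ \text{with the correct multiplicities}\}.
\]
The inclusion $\subseteq$ is clear since vanishing at a point is preserved under compact-open limits. For $\supseteq$, if $g\in\cH_{\leq 1}$ vanishes to the right order at each zero of $\Xi$, then Hadamard factorization (applied to both $g$ and $\Xi$, both of order $\leq 1$) shows $g/\Xi$ is an entire function of order $\leq 1$; approximating $g/\Xi$ by polynomials yields approximation of $g$ by $\Xi\cdot p_n$ in the topology of $\cH_{\leq 1}$. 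This is the delicate step where one must check that density of polynomials actually holds in the Hadamard topology used in the paper (this is where I expect the main obstacle).

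\emph{Part (ii), spectrum.} Once $\overline{N}$ is identified, the quotient $\cH_{\leq 1}/\overline{N}$ is naturally identified with the algebra of truncated Taylor jets at the zero set of $\Xi$, where $M_s-\lambda$ acts at each zero $\alpha$ as multiplication by $(s-\alpha)+(\alpha-\lambda)$. When $\lambda\notin Z(\Xi)$, this is invertible component-wise with uniformly bounded inverse, so $\lambda\notin\mathrm{Sp}(M_s)$. When $\lambda=\alpha\in Z(\Xi)$, the function $\Xi/(s-\alpha)$ lifts to a non-zero class in the quotient annihilated by $(M_s-\alpha)^{m_\alpha}$, so $\alpha\in\mathrm{Sp}(M_s)$. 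Because $\Xi$ has the same zeros as $\zeta(\tfrac12+is)$ after cancellation of the trivial zeros by $\Gamma(z/2)$ and of the pole at $z=1$ by the factor $z(z-1)/2$, this spectrum equals $\{s\in\C:\tfrac12+is\ \text{is a non-trivial zero of}\ \zeta\}$, as claimed.
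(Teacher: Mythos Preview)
Your argument is correct and matches the approach the paper points to; the paper's own proof is a one-line reference (``This follows from \cref{fourierfmu}, as in \cite[Proposition 2.24]{CMbook}''), and what you have written is essentially an explicit reconstruction of that cited argument.

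A few minor remarks. In part (i), the interchange of sum and integral giving $\sum_{n\geq 1} n^{is-1/2}=\zeta(\tfrac12-is)$ is only valid in a half-plane; the identity you need on the critical line (and as an equality of entire functions of $s$) is obtained by analytic continuation, using that $\psi^\pm_\ell\in\cS(\R)_0^{\rm ev}$ so that $\cE(\psi^\pm_\ell)$ has rapid decay at both $0$ and $\infty$. You implicitly use this, but it is worth stating. Your sign and factor bookkeeping with $\Xi$ is correct: $z(z-1)/2=-\tfrac12(1/4+s^2)$ and the functional equation gives $L_\infty(\tfrac12-is)\zeta(\tfrac12-is)=L_\infty(\tfrac12+is)\zeta(\tfrac12+is)$, so the evenness of $\Xi$ is exactly what lets you pass from $z=\tfrac12+is$ to $\tfrac12-is$.

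For part (ii), your density step is right: $\deg R^+_\ell=2\ell-2$, $\deg R^-_\ell=2\ell-1$, with the correct parities and nonzero leading coefficients inherited from the $\cP_n$, so $\{R^\pm_\ell\}_{\ell\geq 1}$ spans $\C[s]$ and the closed span of the $\fourier_\mu(\cE(\psi^\pm_\ell))$ equals $\overline{\Xi\cdot\C[s]}$. The identification of this closure with the ideal of functions vanishing (with multiplicities) on $Z(\Xi)$, and the ensuing spectrum computation, is precisely the content of \cite[Proposition 2.24]{CMbook} to which the paper defers; your honest flag that the density of polynomials in $\cH_{\leq 1}$ is the only genuinely nontrivial point is accurate, and that is where the reference does the work.
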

\proof This follows from Lemma \ref{fourierfmu}, as in   \cite[Proposition 2.24]{CMbook}. 
\endproof

\section{Semilocal case}\label{sectsemilocal}
 \subsection{Notations}\label{notat}
 Let $S$ be a finite set of places, $\infty\in S$, and $\A_{S}$ 
the locally compact ring
\begin{equation}\label{AQS}
 \A_{S}=\prod_{v\in S} \Q_v.
\end{equation}
This ring contains $\Q$ as a subring using the diagonal embedding. Let
$\Q_S$ denote the subring of $\Q$ given by rational numbers whose
denominator only involves primes $p \in S$. In other words,
\begin{equation}\label{AQSring}
 \Q_S=\{q\in \Q\,|\, |q|_v\leq 1\,,\ \forall v\notin S\}\,.
\end{equation}
The group $\Gamma:=\Q^*_S$ of invertible elements of the ring $\Q_S$ is of
the form
\begin{equation}\label{GL1QS}
\Gamma=\GL_1(\Q_S)= \{ \pm p_1^{n_1} \cdots p_k^{n_k} \, :\,  p_j
\in S \setminus\{ \infty \} \,,\, n_j\in \Z\}.
\end{equation}
 The semilocal ad\`ele  class space  $X_{S}$ is by definition the quotient
\begin{equation}\label{XQS}
X_{S}:=\A_{S}/\Gamma,
\end{equation}
and  we let $\pi_S:\A_{S}\to X_S$ be the canonical projection. 
The module extends to a multiplicative map $\vert \bullet \vert_S$ from the ring $\A_S=\prod_{v\in S} \Q_v$ to $\R_+$
\begin{equation}\label{module}
\vert (u_v)_{v\in S} \vert_S=\prod \vert u\vert_v   \in \R_+
\end{equation}
and by construction this map passes to the quotient $X_S=\A_{S}/\Gamma$. 
The groups
\begin{equation}\label{GL1AS}
\GL_1(\A_{S})= \prod_{p\in S} \GL_1(\Q_p), \ \ C_{S}=\GL_1(\A_{S})/\Gamma
\end{equation}
act naturally by multiplication on the quotient $X_{S}$ and the orbit of
$1\in \A_{S}$  gives an embedding
$
C_{S}\to X_{S}.
$
The complement of $C_S$ in $X_S$ is of measure zero for the product of the  Haar measures of the additive groups of the local fields (which is preserved by the action of the countable group $\Gamma$). Using the Radon-Nikodym derivative of the Haar measures of the multiplicative groups with respect to the Haar measure of the additive groups, one obtains a unitary identification
\begin{equation}\label{wS}
 w_S :L^2(X_{S})\to L^2(C_{S})
 \end{equation} (see \cite{CMbook} Proposition 2.30).
 We also recall (see  \cite[Eqs. (2.223) and (2.239)]{CMbook} that  $C_{S}$  is a
 modulated locally compact group with module 
\begin{equation}\label{Modulus}
{\rm Mod}_S(\lambda) =   |\lambda|_S :=  \prod_{p\in S} |\lambda_p |, \quad
 \forall \lambda = (\lambda_p) \in  C_{S} 
 \end{equation}
which is (non-canonically) isomorphic to \,$\R^*_+ \times K_S $ ,
where $K_S$ is the kernel of  ${\rm Mod}_S$.
 \subsection{Semilocal \htt transform}
 We extend the formulas \eqref{uu} and \eqref{mm} to the semilocal case. We use the  notation of \cref{notat}.\newline
 Let $S$ be a finite set of places, $\infty\in S$, let $R_S$ be the maximal compact subring of $\prod_{p\neq \infty}\Q_p$. For $f\in L^2(\R)^{ev}$ let $\eta_S(f)$ be the class of the function $1_{R_S}\otimes f$ in $L^2(X_S)$. More precisely one has, ignoring the summation over $\pm 1$ since $f$ is even, and letting $\Gamma_+:=\Gamma\cap \Q_+$,
 \begin{equation}\label{etas}
 \eta_S(f)(x):=\sum _{\Gamma_+}\,f(\gamma \,\tilde x)\qqq \tilde x \mid \pi_S(\tilde x)=x.
 \end{equation}
  The restriction of $\eta_S(f)$ to the fundamental domain $F=R_S^*\times \R_+^*$ of the action of $\Gamma$ on $\GL_1(\A_{S})= \prod_{p\in S} \GL_1(\Q_p)$ is $R_S^*$-invariant and, since $1_{R_S}\otimes f$ vanishes on $\gamma (1\times u)$ unless $\gamma\in \Z$.  One has 
 $$
 \eta_S(f)(1\times u)=\sum_{\Gamma_+\cap\, \Z}\,f(\gamma \,u).
 $$
 It follows that 
\begin{equation}\label{essl0}
 w_S(\eta_S(f))(u)=\cE_S(f)(u)
   \end{equation}
 where
\begin{equation}\label{essl}
 \cE_S(f)(u):=u^{1/2}\sum_{\Gamma_+\cap\, \Z}\,f(\gamma \,u)
  \end{equation}
 
 \begin{prop}\label{groundstate} Let $f\in L^2(\R)^{ev}$. Then with $L_p(z)=(1-p^{-z})^{-1}$ for $z\in \C$, \begin{enumerate}
\item[(i)] One has 
\begin{equation}\label{ms0}
	\F_\mu (\cE_S(f))(s)=\Big(\prod_{p\in S\setminus\{\infty\}}L_p(\frac 12-is)\Big)(\fourier_{\mu} w_\infty f)(s).
\end{equation}
\item[(ii)] Let $\lambda>0$. One has $\eta_S(P_\lambda)\subset P_\lambda^S$ where $P_\lambda^S$ is the  subspace 
 of $L^2(X_S)^{K_S}$ of functions with support in $\{x\mid \vert x\vert \leq \lambda\}$. 
 \item[(iii)] Let the Fourier transform for $\Q_p$ be normalized so that the function $1_{\Z_p}$ is its own Fourier transform and let $\fourier_S$, acting in $L^2(X_S)$, be induced by the tensor product of the local Fourier transforms. One has $$\fourier_S\circ \eta_S=\eta_S\circ \fourierer.\vspace{-0.05in}$$
 \item[(iv)] One has $\eta_S(\widehat{ P_\lambda})\subset \widehat{P_\lambda^S}$ where $\widehat{ P_\lambda}$ and $\widehat{P_\lambda^S}$ are the images of $P_\lambda,P_\lambda^S$ by the Fourier transform. 
 \end{enumerate}
\end{prop}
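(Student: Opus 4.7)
The four assertions have essentially independent proofs, so I will treat them in order.

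For (i), I would unwind the definition of $\cE_S$ in \eqref{essl} and compute directly. Observe that $\Gamma_+\cap \Z$ is exactly the multiplicative semigroup $N_S$ of positive integers whose prime factors all lie in $S\setminus\{\infty\}$. Then
\begin{equation*}
\fourier_\mu(\cE_S(f))(s)=\sum_{n\in N_S}\int_0^\infty u^{1/2-is}f(nu)\,d^*u=\Big(\sum_{n\in N_S}n^{-(1/2-is)}\Big)\fourier_\mu w_\infty f(s),
\end{equation*}
after the change of variable $v=nu$ (using that $d^*u$ is multiplicatively invariant and $u^{1/2-is}$ scales by $n^{-(1/2-is)}$). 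The Dirichlet series over $N_S$ factors as the finite Euler product $\prod_{p\in S\setminus\{\infty\}}(1-p^{-(1/2-is)})^{-1}=\prod_{p\in S\setminus\{\infty\}}L_p(1/2-is)$, which yields \eqref{ms0}. The only subtlety is convergence of the $N_S$-sum at $\Re z=1/2$, which I would handle by working first at $z$ with $\Re z>1$ and appealing to analytic continuation / Plancherel to transfer the identity to $L^2$.

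For (ii), I would use the fact that the module $\vert\bullet\vert_S$ is $\Gamma$-invariant (by the product formula, since elements of $\Gamma=\Q_S^*$ are units at every place outside $S$) and therefore descends to $X_S$. For any representative $\tilde x=(r,x)\in R_S\times\R$ of a point of $X_S$, one has $\vert\tilde x\vert_S=\big(\prod_{p\in S\setminus\{\infty\}}\vert r_p\vert_p\big)\,\vert x\vert\le\vert x\vert$. Hence if $f\in P_\lambda$, the function $1_{R_S}\otimes f$ is supported where $\vert\tilde x\vert_S\le\lambda$, and this descends to show $\eta_S(f)\in P_\lambda^S$. Its $K_S$-invariance is automatic because $1_{R_S}$ is $R_S^*$-invariant and $f$ depends only on the archimedean modulus.

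For (iii), the key input is the self-duality $\fep(1_{\Z_p})=1_{\Z_p}$ for each $p\in S\setminus\{\infty\}$, which gives $\fourier_S(1_{R_S}\otimes f)=1_{R_S}\otimes\fourierer(f)$. To descend this identity from $\A_S$ to $X_S$ I need the fact that $\fourier_S$ commutes with the $\Gamma$-action: multiplication by $\gamma\in\Gamma$ conjugates $\fourier_S$ to $\vert\gamma\vert_S^{-1}$ times multiplication by $\gamma^{-1}$, and $\vert\gamma\vert_S=1$ by the product formula. Thus $\fourier_S$ passes to $L^2(X_S)$ and the identity $\fourier_S\circ\eta_S=\eta_S\circ\fourierer$ on the representatives passes to $X_S$.

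For (iv), I would simply combine (ii) and (iii): by definition $\widehat{P_\lambda}=\fourierer(P_\lambda)$ and $\widehat{P_\lambda^S}=\fourier_S(P_\lambda^S)$, so
\begin{equation*}
\eta_S(\widehat{P_\lambda})=\eta_S\fourierer(P_\lambda)=\fourier_S\eta_S(P_\lambda)\subset\fourier_S(P_\lambda^S)=\widehat{P_\lambda^S}.
\end{equation*}
I expect the main technical obstacle to lie in (i): justifying the interchange of summation and integration and making sense of the finite Euler product at $\Re(1/2-is)=1/2<1$ where the Dirichlet series does not converge absolutely. The cleanest route is to establish the identity first on a dense subspace of Schwartz-type $f$ for $\Re z>1$, then extend by meromorphic continuation of the local factors and $L^2$-continuity of both sides in $s$.
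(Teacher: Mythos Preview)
Your proof follows essentially the same route as the paper's: the direct computation in (i) via change of variable, the module estimate $\vert(r,x)\vert_S\le\vert x\vert$ on $R_S\times\R$ for (ii), the self-duality of $1_{R_S}$ for (iii), and the combination of (ii)--(iii) for (iv). One remark: the convergence concern you flag in (i) is unfounded, since the Dirichlet series $\sum_{n\in N_S} n^{-(1/2-is)}$ is a \emph{finite} Euler product of geometric series with ratio $p^{-(1/2-is)}$ of modulus $p^{-1/2}<1$, hence converges absolutely for every real $s$---no detour through $\Re z>1$ and analytic continuation is needed.
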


\begin{proof} $(i)$~For simplicity we consider the case of $S=\{p,\infty\}$. Then  one has
\[
\fourier_\mu\mathcal E_p(f)(s)=L_p(\frac 12-is)(\fourier_{\mu} w_\infty f)(s).
\]
Indeed, one has 
\begin{align} \fourier_\mu\mathcal E_p(f)(s)=\int_0^\infty u^{1/2}\sum f(p^ku)u^{-is} d^*u=\sum_{k=0}^\infty \int_0^\infty f(p^k u)u^{\frac 12-is} d^*u =\sum_k\int_0^\infty f(v) p^{-\frac k2} p^{iks} v^{\frac 12-is} d^* v\nonumber\\
=\sum_k p^{-\frac k 2}p^{iks}\int_0^\infty f(v) v^{\frac 12 -is} d^*v=\int_0^\infty f(v) v^{\frac 12 -is} d^*v\,(1-p^{-\frac 1 2-is})^{-1}=L_p(\frac 12-is)(\fourier_{\mu} w_\infty f)(s).\nonumber\end{align}
$(ii)$~The module $\vert x\vert$ is equal to $1$ on principal ideles and hence on elements $\gamma\in \Gamma=\{\pm\prod p_v^{n_v}\mid n_v\in \Z\}$. Thus $\vert x\vert$ makes sense on $X_S=\left(\prod \Q_v\right)/\Gamma$ and so does $P_\lambda^S$. Let $f\in P_\lambda$ and $\eta_S(f)$ be the class of the function $1_{R_S}\otimes f$ in $L^2(X_S)$. Let $x=(y,u)\in R_S\times \R$ with $\vert x\vert>\lambda$. One has  $$
\lambda <\vert x\vert=\vert y\vert \vert u\vert\leq \vert u\vert \Rightarrow \vert u\vert>\lambda \Rightarrow  f(u)=0.
$$
$(iii)$~This follows since $1_{R_S}$ is its own Fourier transform. \newline
$(iv)$~Follows from $(ii)$ and $(iii)$.
\end{proof}
We let $\cU_S:=\F_\mu\circ  w_S$ and $\cM_S:L^2(\R)\to L^2(\R,dm_S)$ the unitary  given by 
\begin{equation}\label{ms}
\cM_S(f)(s):=\left(\prod_{v\in S} L_v(\frac 12-is) \right)^{-1}	f(s),\ \ dm_S(s):=\vert\prod_{v\in S} L_v(\frac 12-is)\vert^2\, ds
\end{equation}
We let $K_S$ be the kernel of the module $C_{S}\to \R_+^*$ and denote by $\scal$ the generator of the scaling action of $\GL_1(\R)_+\subset \GL_1(\A_{S})= \prod_{p\in S} \GL_1(\Q_p)$ on $L^2(X_S)^{K_S}$.
\begin{prop}\label{httransfoS} 
\begin{enumerate} \item[(i)] The unitary transformation $\cV_S:=\cM_S\circ \cU_S:L^2(X_S)^{K_S}\to L^2(\R,dm_S)$  gives the canonical form of the cyclic pair  $(D,\xi)$ where $D:= \scal$ and $\xi= \xi_S:=\eta_S(\xi_\infty )$.
\item[(ii)] The cyclic pair  $(\scal,\xi_S)$ is even and the grading is given by the Fourier transform $\fourier_S$ which becomes the symmetry $s\mapsto -s$ under the unitary transformation $\cV_S$.
\end{enumerate}
\end{prop}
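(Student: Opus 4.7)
The plan is to verify the four conditions that characterise the canonical form of a cyclic pair (Theorem~\ref{cycpair}): that $\cV_S$ is unitary, that it conjugates $\scal$ to multiplication by the coordinate $s$, that it sends $\xi_S$ to a constant function, and that $\xi_S$ is cyclic for $\scal$. For~(ii) I plan to exhibit $\fourier_S$ itself as the grading and invoke the uniqueness part of Proposition~\ref{z2grad}.

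The cyclic-vector identity is the computational core. Combining \eqref{essl0} with Proposition~\ref{groundstate}(i) gives
\begin{equation*}
\cU_S(\xi_S)(s)\;=\;\F_\mu\cE_S(\xi_\infty)(s)\;=\;\Bigl(\prod_{p\in S\setminus\{\infty\}} L_p(\tfrac12-is)\Bigr)(\F_\mu w_\infty\xi_\infty)(s),
\end{equation*}
and Theorem~\ref{mainstart}(i) identifies $(\F_\mu w_\infty\xi_\infty)(s)$ with $2^{-3/4}L_\infty(\tfrac12-is)$. Hence $\cU_S(\xi_S)$ is a scalar multiple of $\prod_{v\in S}L_v(\tfrac12-is)$, and $\cM_S$ is pointwise division by exactly this product, so $\cV_S(\xi_S)$ is a constant function. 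For the intertwining, the scaling generator on $L^2(X_S)^{K_S}$ is transported by $w_S$ to the standard generator on $L^2(\R_+^*,d^*u)$, which under $\F_\mu$ becomes multiplication by $s$ by the integration by parts already used in Proposition~\ref{httransfo}(i); $\cM_S$, being pointwise multiplication by a function of $s$, commutes with this. Cyclicity of $\xi_S$ then amounts to density of polynomials in $L^2(\R,dm_S)$, which holds because the archimedean factor forces $\lvert\prod_v L_v(\tfrac12-is)\rvert^2$ to decay like $e^{-\pi|s|/2}$ at infinity (the finite-$p$ factors are bounded above and below on the critical line), making the moment problem for $dm_S$ determinate. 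This settles~(i), and $dm_S$ is then the spectral measure by Theorem~\ref{cycpair}(iii).

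For~(ii), set $\gamma:=\fourier_S$. The self-dual normalisation of the local Fourier transforms gives $\gamma^{2}$ equal to the parity map on $L^2(\A_S)$, which descends to the identity on $L^2(X_S)$ since $-1\in\Gamma$; self-adjointness is built in. The anticommutation $\fourier_S\scal=-\scal\,\fourier_S$ holds factor by factor on $L^2(\A_S)$ and hence on $L^2(X_S)^{K_S}$. Finally, $\fourier_S\xi_S=\xi_S$ reduces, via Proposition~\ref{groundstate}(iii), to $\fourierer\xi_\infty=\xi_\infty$, which holds since $\xi_\infty=h_0$ is the self-reciprocal Gaussian. Hence $(\scal,\xi_S)$ is even with grading $\fourier_S$, and the uniqueness statement of Proposition~\ref{z2grad} forces $\cV_S$ to carry $\fourier_S$ to the reflection $s\mapsto-s$.

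The only substantive step is the density of polynomials in $L^2(\R,dm_S)$, \ie determinacy of the moment problem; this rests on the super-exponential decay of $|L_\infty(\tfrac12-is)|^2$. The remaining verifications reduce to Proposition~\ref{groundstate} and the archimedean Proposition~\ref{httransfo}.
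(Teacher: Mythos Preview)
Your proof is correct and follows essentially the same approach as the paper: compute $\cU_S(\xi_S)$ via Proposition~\ref{groundstate}(i), divide by $\cM_S$ to obtain the constant function, and for~(ii) invoke Proposition~\ref{groundstate}(iii) together with the uniqueness of the grading from Proposition~\ref{z2grad}. You in fact supply one detail the paper's proof leaves implicit, namely the cyclicity of $\xi_S$ (equivalently, density of polynomials in $L^2(\R,dm_S)$), which you correctly obtain from the exponential decay of $\lvert L_\infty(\tfrac12-is)\rvert^2$ and the boundedness of the finite-place factors.
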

\proof $(i)$~The unitary $\cU_S:L^2(X_S)^{K_S}\to L^2(\R)$ is an isomorphism and transforms the operator $\scal$ into the multiplication by the variable $s$ as in the proof of  \cref{httransfo}. By \eqref{ms0} applied to $\xi_\infty$ one has 
$$
\cU_S(\xi_S)(s)=\cU_S(\eta_S( \xi_\infty))(s)=\Big(\prod_{p\in S\setminus\{\infty\}}L_p(\frac 12-is)\Big)(\fourier_{\mu} w_\infty( \xi_\infty))(s)=\prod_{v\in S} L_v(\frac 12-is) 
$$
$$
\cV_S(\xi_S)(s)=\cM_S\circ \cU_S(h_S)(s)=1.
$$
Thus $\cV_S:L^2(X_S)^{K_S}\to L^2(\R,dm_S)$ is an isomorphism which transforms the operator $\scal$ into the multiplication by the variable $s$ and the vector $\xi_S$ into the constant function $1$.\newline
$(ii)$~One checks directly that the Fourier transform $\fourier_S$ anticommutes with $\scal$. It fixes the vector $\xi$ by  \cref{groundstate}. The uniqueness of the grading shows that the Fourier transform $\fourier_S$  becomes the symmetry $s\mapsto -s$ under the unitary transformation $\cV_S$.\endproof 
We let $\iota_S:L^2(\R,dm)\to L^2(\R,dm_S)$ be the identity map $\iota_S(f)(s):=f(s)$, $\forall s\in \R$.
\begin{prop}\label{tauS}
\begin{enumerate} \item[(i)] The map $\iota_S$ is bounded with bounded inverse.
\item[(ii)] One has the following commutative diagram:
\begin{equation}
\begin{gathered} %\raisetag{-44pt} \,\hspace{50pt}
\xymatrix@C=25pt@R=25pt{ 
L^2(\R)^{ev} \ar[d]^{\cV} \ar[r]^{\eta_S} &
L^2(X_S)^{K_S}\ar[d]^{\cV_S}  \\
 L^2(\R,dm) \ar[r]^{\iota_S} & L^2(\R,dm_S)
 } \label{srole}
 %\hspace{140pt}
\end{gathered}
\end{equation}
\end{enumerate}
\end{prop}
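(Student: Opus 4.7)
For part (i), I would compute the Radon--Nikodym derivative $dm_S/dm$ directly. Using $L_\infty(1/2-is) = \pi^{-1/4+is/2}\Gamma(1/4-is/2)$ together with $|\Gamma(\bar z)| = |\Gamma(z)|$ gives $|L_\infty(1/2-is)|^2 = \pi^{-1/2}|\Gamma(1/4+is/2)|^2$, so
\[
\frac{dm_S}{dm}(s) \;=\; 2^{3/2}\pi\prod_{p\in S\setminus\{\infty\}}|L_p(\tfrac12 - is)|^2.
\]
For each finite prime $p\in S$ the reciprocal factor $|L_p(1/2-is)|^{-1} = |1 - p^{-1/2}e^{is\log p}|$ is sandwiched in the compact interval $[1 - p^{-1/2},\, 1+p^{-1/2}] \subset (0,\infty)$ independently of $s$, since $p^{-1/2}<1$. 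Because $S$ is finite, the full product is therefore bounded above and below by strictly positive constants, which yields boundedness of $\iota_S$ together with that of its inverse.

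For part (ii), the plan is to avoid tracking constants by showing that both compositions $\iota_S\circ\cV$ and $\cV_S\circ\eta_S$ are bounded linear maps $L^2(\R)^{\mathrm{ev}} \to L^2(\R,dm_S)$ that intertwine the action of $\scal$ on the source with multiplication by $s$ on the target, and then identifying them through their common action on the cyclic vector $\xi_\infty$. Equivariance of $\iota_S\circ \cV$ follows from \Cref{httransfo}(i) together with the fact that $\iota_S$ acts as the identity on underlying functions and therefore commutes with multiplication by the coordinate. Equivariance of $\cV_S\circ\eta_S$ follows from \Cref{httransfoS}(i) together with the scaling--equivariance of $\eta_S$, which is built into its definition \eqref{etas}: the sum over $\Gamma$ composed with the canonical projection $\pi_S$ to $X_S = \A_S/\Gamma$ commutes with the $\R^*_+$--action. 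Both compositions send $\xi_\infty$ to the constant function $1\in L^2(\R,dm_S)$: the left-then-down path by \Cref{mainstart}(iii)--(iv), and the down-then-right path because $\xi_S := \eta_S(\xi_\infty)$ is mapped to the constant function $1$ by $\cV_S$ per \Cref{httransfoS}(i). Cyclicity of $\xi_\infty$ for $\scal$ now forces the two maps to agree on the dense subspace $\operatorname{span}\{\scal^n\xi_\infty : n\in \N\}$, and boundedness (guaranteed by part (i)) extends the agreement to all of $L^2(\R)^{\mathrm{ev}}$.

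The main obstacle is essentially bookkeeping: the various normalizations of the Fourier transforms (the $\pi^{-1/2}$ built into $\cU$ but absent from $\cU_S$), of the ground state $h_0$, and of the two spectral measures. The cleanest way to bypass this is exactly the equivariance--plus--cyclic--vector strategy, since the only substantive input beyond \Cref{mainstart} is \Cref{groundstate}(i), which is used precisely to verify that $\xi_S = \eta_S(\xi_\infty)$ lands on the constant function under $\cV_S$; every appearance of $L_\infty(1/2-is)$ coming out of \Cref{mainstart}(i) is exactly cancelled by the construction of $\cM_S$, whose whole purpose is to divide by $\prod_{v\in S}L_v(1/2-is)$ rather than by $\prod_{p\in S\setminus\{\infty\}}L_p(1/2-is)$ alone. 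Uniqueness of the canonical form of a cyclic pair (\Cref{cycpair}) then closes the argument without any further computation.
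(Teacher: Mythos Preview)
Your treatment of (i) is correct and coincides with the paper's: both identify $dm_S/dm$ as a positive constant times $\prod_{p\in S\setminus\{\infty\}}|L_p(\tfrac12-is)|^2$, and each finite-prime factor lies in $[(1+p^{-1/2})^{-2},(1-p^{-1/2})^{-2}]$.

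For (ii) you take a genuinely different route. The paper's proof is a single line: it invokes \eqref{ms0} (that is, \Cref{groundstate}(i)) for an \emph{arbitrary} $f$, and then dividing both sides of
\[
\cU_S(\eta_S f)(s)=\Big(\prod_{p\in S\setminus\{\infty\}}L_p(\tfrac12-is)\Big)\,(\fourier_\mu w_\infty f)(s)
\]
by $\prod_{v\in S}L_v(\tfrac12-is)$ yields $\cV_S(\eta_S f)=\cV f$ directly. You instead use \eqref{ms0} only at the single vector $\xi_\infty$ (via \Cref{httransfoS}(i)) and then appeal to $\scal$-equivariance and cyclicity to propagate the equality. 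This is valid and has the virtue of making explicit that the commutativity is forced by the uniqueness of the canonical form of a cyclic pair; but it is a detour, since the key identity you quote for $\xi_\infty$ is already available for all $f$, which is exactly what the paper exploits. One small point to tighten: your phrase ``boundedness (guaranteed by part (i))'' literally covers only $\iota_S\circ\cV$; boundedness of $\cV_S\circ\eta_S$ is not given a priori. Your argument in fact yields it a posteriori (the restriction to the dense cyclic subspace coincides with a bounded map, hence extends), but you should say so, and note that the extension agrees with the originally defined $\eta_S$ because \eqref{ms0} holds for all $f\in L^2(\R)^{\mathrm{ev}}$.
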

\proof 
 $(i)$~This follows since the function $\prod_{p\in S\setminus\{\infty\}}L_p(\frac 12-is)$ is bounded with bounded inverse, so that the Radon-Nikodym derivatives $dm/dm_S$ and $dm_S/dm$ are both bounded.\newline
 $(ii)$~This follows from \eqref{ms0}.\endproof 
 
 \subsection{Semilocal Hermite operator}
 Let $S$ be a finite collection of places (including the archimedean one) and $(\scal,\xi_S)$ the cyclic pair of  \cref{httransfoS}.
As in \cref{cyclpair}, the associated Hermite operator $N_S$ is defined as the grading operator associated with the  filtration $(E_n)$ of the Hilbert space $L^2(X_S)^{K_S}$ by the subspaces generated by the iterates $\scal^j\xi$, $0\leq j\leq n$ 
\begin{thm}\label{hermsemiloc}
\begin{enumerate}
\item[1.] For $S=\{\infty\}$ the semilocal Hermite operator $N_S$ is the restriction of the harmonic oscillator to  $L^2(\R)^{ev}$.
\item[2.] The eigenfunctions of the semilocal Hermite operator $N_S$ are elements of $L^2(X_S)^{K_S}$ of the form $\eta_S (P_n^S(x)e^{-\pi x^2})$, where $P_n^S$ are polynomials obtained by orthonormalization and induction.
\item[3.] The matrix of $\scal$ in the above orthonormal basis of $L^2(X_S)^{K_S}$, is a Jacobi hermitian matrix.
\end{enumerate}
\end{thm}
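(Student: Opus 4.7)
\textbf{Proof plan for \Cref{hermsemiloc}.}

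For part 1, when $S=\{\infty\}$ the canonical identification gives $L^2(X_S)^{K_S}=L^2(\R)^{ev}$ and $\xi_S=\eta_S(\xi_\infty)=\xi_\infty$, so the semilocal cyclic pair reduces to $(\scal,\xi_\infty)$ of \cref{httransfo}. The statement is then exactly \Cref{numberN}(i), which gives $N={\bf H}/(8\pi)-\tfrac14$, identifying $N_S$ with the harmonic oscillator ${\bf H}$ up to affine rescaling.

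For parts 2 and 3 my first step is to verify the intertwining identity $\eta_S\circ\scal=\scal\circ\eta_S$ on the dense domain generated by $\xi_\infty$. The commutative diagram \eqref{srole} gives $\eta_S=\cV_S^{-1}\iota_S\cV$; by \cref{httransfo} and \cref{httransfoS}, both $\cV$ and $\cV_S$ conjugate the respective scaling operators to the multiplication operator $s$ on their target $L^2$-spaces, while $\iota_S$ is the identity on functions and therefore commutes with multiplication by $s$. Composing these four intertwinings yields the desired identity. Consequently $\scal^j\xi_S=\eta_S(\scal^j\xi_\infty)$ for all $j$, which means that the filtration $(E_n)$ of the semilocal cyclic pair satisfies $E_n=\eta_S(E_n^\R)$, where $E_n^\R$ denotes the archimedean filtration associated to $(\scal,\xi_\infty)$. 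By \Cref{numberN}(ii), $E_n^\R$ is the space of functions $P(x)e^{-\pi x^2}$ with $P$ an even polynomial of degree $\leq 2n$. Because $\dim E_n=n+1$ holds for any cyclic pair by \Cref{cycpair}, comparing dimensions shows that the restriction $\eta_S\colon E_n^\R\to E_n$ is a linear isomorphism.

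Applying the Gram--Schmidt process inside $L^2(X_S)^{K_S}$ to the sequence $\eta_S(\scal^j\xi_\infty)$, $j\geq 0$, produces an orthonormal basis $(\xi_n^S)$ with $\xi_n^S\in E_n\ominus E_{n-1}$. By linearity of $\eta_S$ and the identification above, each $\xi_n^S$ is automatically of the form $\eta_S\bigl(P_n^S(x)e^{-\pi x^2}\bigr)$ for an even polynomial $P_n^S$ of degree exactly $2n$, the polynomial being determined inductively (up to sign) by the orthonormality requirement with respect to the inner product inherited from $L^2(X_S)^{K_S}$; this inner product, via the isomorphism $\cV_S$, is just integration against $dm_S$ of \eqref{msintro}. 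These vectors are by construction the eigenfunctions of the grading operator $N_S$ with $N_S\xi_n^S=n\xi_n^S$, proving part 2. Part 3 is then immediate from the general theory of \cref{an}: self-adjointness of $\scal$ together with $\scal E_n\subset E_{n+1}$ forces $\langle\scal\xi_n^S,\xi_k^S\rangle=0$ whenever $|k-n|>1$, so the matrix of $\scal$ in the basis $(\xi_n^S)$ is tridiagonal and hermitian, i.e.\ a Jacobi matrix in the sense of \eqref{dxij}.

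The only genuine obstacle is the intertwining $\eta_S\scal=\scal\eta_S$; once it is granted, everything else is pure assembly of \cref{cyclpair,pso}. The truly new—and here deferred—content is the explicit determination of the Jacobi coefficients and of the polynomials $P_n^S$, which depends on the specific form of the semilocal measure $dm_S$.
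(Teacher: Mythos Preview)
Your proof is correct and follows essentially the same route as the paper: reduce part 1 to \Cref{numberN}, establish $\eta_S\scal=\scal\eta_S$ to identify $E_n^S=\eta_S(E_n^\R)$, then orthonormalize for part 2 and invoke \eqref{dxij} for part 3. The only cosmetic difference is that the paper justifies the intertwining directly from the fact that $\scal$ acts on the archimedean variable alone, whereas you route it through the diagram \eqref{srole}; both arguments are valid and yield the same conclusion.
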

\begin{proof}
1.~See \cref{numberN}.\newline
2.~One has $\xi_S=2^{1/4}\,\eta_S( e^{-\pi x^2})$ and the operator $\scal= -i(H+\frac 12)$ acts as scaling on the archimedean variable. The subspaces $E_n^S\subset L^2(X_S)^{K_S}$ are obtained by iteration of the operator $\scal$ applied to the function $\eta_S( e^{-\pi x^2})$ and the scaling operator commutes with $\eta_S$. Thus $E_n^S$ is the image by $\eta_S$ of the space of products  $P(x)e^{-\pi x^2}$ where $P$ is an even polynomial of degree $\leq 2n$. Since $\eta_S$ is not unitary the orthogonalisation process delivers polynomials $P_n^S$ which depend upon $S$.  \newline
3. Follows from \eqref{dxij}, \ie the general theory of orthogonal polynomials.
\end{proof}

Thus we have reached a candidate for the analogue of the Hermite operator in the semilocal case and also for the prolate operator as in  \Cref{prolop}:
\[
\mathbf W_{\lambda, S} = (H+\frac 12)^2 + \lambda^2 N_S.
\]
\begin{rem}\label{care} \begin{enumerate}
\item[(i)] Due to the non-unitary nature of $\eta_S$ one has $N_S\circ \eta_S\neq \eta_S\circ N_\infty$ unless $S=\{\infty\}$ but the filtrations associated to the subspaces  $N\leq n$ correspond to each other by the map $\eta_S$ as shown in  \Cref{hermsemiloc} $(ii)$.
\item[(ii)] Let $\vert\bullet \vert_S:
X_S=\A_{S}/\Gamma\to \R_+$ be the module as in \eqref{module}. Another guess for the semilocal analogue of the Hermite operator is to express the latter when $S=\{\infty\}$ in terms of a sum of the multiplication by $\vert x\vert_S^2$ and its conjugate under the Fourier transform. 
\item[(iii)] Note that unless $S=\{\infty\}$ one has 
\begin{equation}\label{care1}
\vert \bullet\vert_S^2\, \eta_S( f)\neq \eta_S(\vert \bullet\vert^2f).
\end{equation}	
Indeed, taking $S=\{p,\infty\}$ for simplicity, one has for $f$ an even function 
$$
w_S\eta_S( f)=\cE_p(f), \ \ \cE_p(f)(u)=u^{1/2}\sum_\N f(p^n u)
$$
 the image of the l.h.s. of \eqref{care1} under the map $w_S$ is the function $u^2\cE_p(f)(u)$,   while the image of the r.h.s. of \eqref{care1} under the map $w_S$ is $\cE_p(\vert \bullet\vert^2f)$ which evaluated at $u$ gives the different expression
 $$
 u^{1/2}\sum_\N u^2 p^{2n}f(p^n u)
 $$
 \end{enumerate}
\end{rem}

\subsection{The dual \htt transform}\label{psodual}
We let as above $S$ be a finite set of places containing $\infty$, and  $\cU_S:=\F_\mu\circ  w_S$. We let \begin{equation}\label{ess}E_S(s):=\prod_{p\in S}L_p(\frac 12+is)\end{equation}
 and consider the Hilbert space $L^2\Big(\R, \frac{ds}{|E_S(s)|^2}\Big)$. We let $\beta_S:L^2(\R)\to L^2\Big(\R, \frac{ds}{|E_S((s)|^2}\Big)$ be the unitary  given by 
\begin{equation}\label{msdual}
\beta_S(f)(s):=\left(\prod_{v\in S} L_v(\frac 12+is) \right)	f(s)
\end{equation}
\begin{defn} \label{dualht} The dual \htt transform is the unitary 
\begin{equation}\label{dualht0}
\upsilon_S: L^2(X_S)^{K_S}\to L^2\Big(\R, \frac{ds}{|E_S(s)|^2}\Big), \ \ \upsilon_S = \beta_S\circ \cU_S
\end{equation}
\end{defn}
\begin{prop}\label{pairing}
\begin{enumerate} \item[(i)] The following equality defines a sesquilinear pairing $\langle 
	L^2\Big(\R, \frac{ds}{|E_S(s)|^2}\Big)\vert L^2(\R,dm_S)\rangle$
	\begin{equation}\label{pairingdef}
\langle \xi \vert \eta\rangle_\R:=\int \xi(x)\overline{\eta(x)}dx\qqq \xi \in L^2\Big(\R, \frac{ds}{|E_S(s)|^2}\Big),\ \eta \in L^2(\R,dm_S)\end{equation}
\item[(ii)] For any $\xi,\eta\in L^2(X_S)^{K_S}$ one has $\langle \upsilon_S\xi \vert \cV_S\eta\rangle_\R=\langle \xi\vert \eta\rangle$.
\end{enumerate}
\end{prop}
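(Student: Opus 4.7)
The plan is to reduce everything to the single algebraic identity $\overline{L_v(\tfrac12-is)}=L_v(\tfrac12+is)$ for $s\in\R$ and $v\in S$. For the non-archimedean places this is immediate from $L_p(z)=(1-p^{-z})^{-1}$ together with the Schwartz reflection $\overline{p^{-z}}=p^{-\bar z}$; for $v=\infty$ it reduces to $\overline{\Gamma(z)}=\Gamma(\bar z)$ applied to $L_\infty(z)=\pi^{-z/2}\Gamma(z/2)$. In particular, taking absolute values one obtains the crucial identity
\begin{equation*}
dm_S(s)=\Big|\prod_{v\in S} L_v(\tfrac 12-is)\Big|^2 ds=|E_S(s)|^2\,ds,
\end{equation*}
which is what ties the two weighted $L^2$-spaces together as mutually dual weightings.

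For part (i), the pairing is well-defined by a direct Cauchy--Schwarz estimate. Given $\xi\in L^2(\R,ds/|E_S|^2)$ and $\eta\in L^2(\R,dm_S)$, I would write
\begin{equation*}
\int |\xi(s)\overline{\eta(s)}|\,ds=\int \frac{|\xi(s)|}{|E_S(s)|}\cdot\bigl(|\eta(s)|\,|E_S(s)|\bigr)\,ds
\leq \|\xi\|_{L^2(ds/|E_S|^2)}\,\|\eta\|_{L^2(dm_S)},
\end{equation*}
using the identity $dm_S=|E_S|^2 ds$ from above. Sesquilinearity is obvious, so the pairing is continuous and well-defined.

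For part (ii), the idea is to unwind the two unitaries and let the boundary factors cancel. From the definitions one has $\upsilon_S\xi(s)=E_S(s)\cdot(\cU_S\xi)(s)$ and $\cV_S\eta(s)=\bigl(\prod_{v\in S}L_v(\tfrac12-is)\bigr)^{-1}(\cU_S\eta)(s)$. Using the conjugation identity, $\overline{\cV_S\eta(s)}=E_S(s)^{-1}\,\overline{(\cU_S\eta)(s)}$, so
\begin{equation*}
\langle \upsilon_S\xi\mid \cV_S\eta\rangle_\R=\int E_S(s)\,(\cU_S\xi)(s)\cdot E_S(s)^{-1}\,\overline{(\cU_S\eta)(s)}\,ds=\int (\cU_S\xi)(s)\,\overline{(\cU_S\eta)(s)}\,ds.
\end{equation*}
Finally, since $\cV_S=\cM_S\circ\cU_S$ is unitary by \cref{httransfoS} and $\cM_S$ is a unitary multiplication operator from $L^2(\R,ds)$ to $L^2(\R,dm_S)$, the intermediate map $\cU_S\colon L^2(X_S)^{K_S}\to L^2(\R,ds)$ is itself a unitary, so the last integral equals $\langle\xi\mid\eta\rangle$, completing the proof.

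The only non-routine step is really the conjugation identity for the local Euler factors, which I would state once at the beginning as a lemma; the rest is then a mechanical cancellation of $E_S(s)$ against $\overline{(\prod_v L_v(\tfrac12-is))^{-1}}$. I do not anticipate any genuine obstacle, provided one is careful that the normalization making $\cV_S$ unitary to $L^2(\R,dm_S)$ forces $\cU_S$ to be unitary to $L^2(\R,ds)$ with the Lebesgue measure, which is consistent with the conventions fixed in Section~\ref{notat} and \cref{httransfoS}.
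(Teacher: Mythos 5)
Your proof is correct and takes essentially the same route as the paper, whose entire argument is the one-line observation that both parts follow from $\overline{L_v(\tfrac12+is)}=L_v(\tfrac12-is)$ combined with the definitions \eqref{ms} and \eqref{msdual}; you have simply written out the Cauchy--Schwarz estimate and the cancellation of $E_S$ explicitly. The only point worth noting is that the unitarity of $\cU_S:L^2(X_S)^{K_S}\to L^2(\R)$ is asserted directly in the proof of \cref{httransfoS}, so your detour through the unitarity of $\cV_S$ and $\cM_S$ is unnecessary but harmless.
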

\proof The proof of $(i)$ and $(ii)$ follows from the equality valid for any place $v$ and $s\in \R$,
$$
\overline{L_v(\frac 12+is)}=L_v(\frac 12-is).
$$
and combining \eqref{ms} with \eqref{msdual}.\endproof

  \subsection{The Sonin space in the local case}
   The local definition of Sonin's space is
 \begin{defn}\label{soninp} Let $\K$ be a local field and $\alpha$ an additive character of $\K$. Let $\lambda>0$. The Sonin space $\son_\lambda(\K,\alpha)$ is the  subspace of the $L^2$-space of square integrable functions on $\K$ defined as follows \[
\son_\lambda(\K,\alpha):= \{ f \in L^2(\K)\mid f(x)=0 \ \& \ \fourier_\alpha  f(x)=0\quad \forall x, \vert x\vert <\lambda\}
\]
where $\F_\alpha$ denotes the Fourier transform with respect to $\alpha$.
\end{defn}
We use the imaginary exponential $e:\Q/\Z\to \{z\in \C\mid \vert z\vert =1\}$, $e(x):=\exp(2\pi i x)$.
We endow $\Q_p$ with the additive character (obtained using the embedding $\Q_p/\Z_p\subset \Q/\Z $)  
\begin{equation}\label{ep}
  e_p: \Q_p\to \Q_p/\Z_p\subset \Q/\Z \stackrel{e}{\to} \{z\in \C\mid \vert z\vert =1\}\subset \C
\end{equation}
which is equal to $1$ on the maximal compact subring $\Z_p\subset \Q_p$.\newline
Given a function $f \in L^2(\K)$ and $a\in \K^*$, we let $f_a$ be the function $f_a(x):=f(ax)$. The Fourier transform fulfills the equality 
$$
\fourier (f_a)=\frac{1}{\vert a\vert}\,\fourier(f)_{a^{-1}}
$$

\begin{prop} \label{soninppp} Let $p$ be a finite prime and  $\K=\Q_p$, $\alpha=e_p$. The $\Z_p^*$-invariant part of $\son_1(\Q_p,e_p)$ is one dimensional, with generator $\sigma_p:=\epsilon_0-\frac 1p \epsilon_1$ where $\epsilon_n$ is the characteristic function of $\{ x\in \Q_p\mid \vert x\vert =p^n\}$. Moreover one has $\fep \sigma_p=\sigma_p$.	
\end{prop}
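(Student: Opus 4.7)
The plan is to exploit the very rigid structure of $\Z_p^*$-invariant $L^2$ functions on $\Q_p$. Any such $f$ is constant on each sphere $\{|x|=p^n\}$, so one can write $f=\sum_{n\in\Z} c_n \epsilon_n$ with the $\epsilon_n$ mutually orthogonal of square norm $p^n(1-1/p)$ (using the self-dual Haar measure with $\mu(\Z_p)=1$). The support condition $f(x)=0$ for $|x|<1$ immediately kills all $c_n$ with $n<0$, so $f=\sum_{n\ge 0} c_n \epsilon_n$.

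Next, I would compute $\fep(\epsilon_n)$ by writing $\epsilon_n = 1_{p^{-n}\Z_p}-1_{p^{-n+1}\Z_p}$ and using the scaling rule $\fep(1_{p^{-k}\Z_p}) = p^k\, 1_{p^k\Z_p}$ (consequence of $\fep(f_a)=|a|^{-1}\fep(f)_{a^{-1}}$ and $\fep(1_{\Z_p})=1_{\Z_p}$). This gives $\fep(\epsilon_n) = p^n 1_{p^n\Z_p}-p^{n-1}1_{p^{n-1}\Z_p}$. Evaluating $\fep f$ at a point $x$ with $|x|=p^{-k}$ (so $x\in p^k\Z_p$, $k\ge 1$), only the terms with $n\le k$ and $n\le k+1$ survive, and a short bookkeeping yields
\[
\fep f(x)\big|_{|x|=p^{-k}} = S_k\Big(1-\tfrac 1p\Big) - c_{k+1} p^k, \qquad S_k:=\sum_{n=0}^{k} c_n p^n.
\]

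The Sonin condition $\fep f = 0$ on $\{|x|<1\}$ is thus the recursion $c_{k+1} p^k = (1-1/p) S_k$ for all $k\ge 1$. Substituting gives $S_{k+1}=pS_k$, hence $S_k = p^{k-1}S_1$ and $c_{k+1} = (1-1/p) S_1/p$ is a \emph{constant} independent of $k$ for $k\ge 1$. Now impose the $L^2$ condition $\sum_n |c_n|^2 p^n(1-1/p)<\infty$: since $|c_n|$ is eventually constant, this forces that constant to be zero, \ie $S_1 = c_0+c_1 p = 0$, equivalently $c_1=-c_0/p$, and then $c_n=0$ for $n\ge 2$. This shows the $\Z_p^*$-invariant part of $\son_1(\Q_p,e_p)$ is at most one dimensional, with candidate generator $\sigma_p = \epsilon_0 - \frac 1p \epsilon_1$.

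It remains to verify both that $\sigma_p$ lies in the Sonin space and satisfies $\fep\sigma_p=\sigma_p$. This is a finite computation: using the formulas above for $\fep(\epsilon_0)$ and $\fep(\epsilon_1)$, one evaluates $\fep\sigma_p$ on each sphere $|x|=p^k$, getting $0$ for $k\le -1$ (confirming membership in $\son_1$), $1$ for $k=0$, $-1/p$ for $k=1$, and $0$ for $k\ge 2$; comparing with the definition of $\sigma_p$ gives $\fep\sigma_p=\sigma_p$. There is no real obstacle here; the only point requiring care is tracking signs and indices in step two, since a confusion between $|x|\le p^{-n}$ and $|x|=p^{-n}$ would corrupt the recursion.
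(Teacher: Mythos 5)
Your proof is correct and follows essentially the same route as the paper: decompose a $\Z_p^*$-invariant function over the spheres $\{\vert x\vert=p^n\}$, compute $\fep(\epsilon_n)$ from the scaling rule applied to $\zp$, use the support condition to kill the $n<0$ coefficients, and read off the constraints from the vanishing of $\fep f$ on $\{\vert x\vert<1\}$. The one place you genuinely diverge is the last step: the paper does not solve the recursion, but instead observes that the Sonin condition forces $\fep\psi=a\epsilon_0+b\epsilon_1$ and then applies $\fep$ once more (using $\fep^2=\mathrm{id}$ on these invariant functions and continuity of $\fep(a\epsilon_0+b\epsilon_1)$ at $0$) to get $a+pb=0$, i.e.\ $a_0=-pa_1$; you instead solve the recursion explicitly, find that $c_n$ is constant for $n\ge 2$, and invoke square-integrability ($\sum\vert c_n\vert^2p^n<\infty$) to force that constant, hence $S_1=c_0+pc_1$, to vanish. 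Your variant makes the role of the $L^2$ condition explicit and avoids the appeal to $\fep^2=\mathrm{id}$; the paper's variant is slightly shorter but leaves the vanishing of $a_n$ for $n\ge 2$ implicit in the recursion. Both close the argument correctly, and your final verification that $\fep\sigma_p=\sigma_p$ matches the paper's computation.
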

\proof The function $\epsilon_n$ is equal to $(\zp)_{p^n}-(\zp)_{p^{n-1}}$ since one has
$$
\epsilon_n(x)=\zp(p^n x)-\zp(p^{n-1}x).
$$
Moreover one has $\zp=\sum_0^\infty \epsilon_{-\ell}$ and $(\epsilon_\ell)_{p^k}=\epsilon_{\ell+k}$. 
The Fourier transform of the function $\epsilon_n$ is then  given by
\begin{equation}\label{fourierpp}
\fep({\epsilon}_n)=p^n (\zp)_{p^{-n}}-p^{n-1}(\zp)_{p^{-n+1}}=p^n(1-\frac 1p)\sum_0^\infty \epsilon_{-n-k}-p^n\frac 1p \epsilon_{-n+1}.
\end{equation}  By \eqref{fourierpp} one has
$$
\fep \sigma_p=\fep \epsilon_0- \frac 1p \fep \epsilon_1=\left((1-\frac 1p)\sum_0^\infty \epsilon_{-k}-\frac 1p \epsilon_{1}\right) -\left((1-\frac 1p)\sum_0^\infty \epsilon_{-1-k}-\frac 1p \epsilon_{0}\right) =\epsilon_0-\frac 1p \epsilon_1
$$
thus $\fep \sigma_p=\sigma_p$.  By construction one has $\sigma_p(x)=0$ $\forall x, \vert x\vert <1$ thus   $\sigma_p \in \son_1(\Q_p,e_p)$. \newline
Let us now show that any $\psi \in \son_1(\Q_p,e_p)$ which is  $\Z_p^*$-invariant is proportional to $\sigma_p$. Since $\psi$ is  $\Z_p^*$-invariant, there exists coefficients $a_n\in \C$ such that $\psi=\sum_{n\in \Z}a_n \epsilon_n$ and the $L^2$ condition means that $\sum p^n\vert a_n\vert^2<\infty$. Since $\psi(x)=0$ when $\vert x\vert<1$ all the coefficients $a_n$ for $n<0$ vanish and one has $\psi=\sum_{n\geq 0}a_n \epsilon_n$.  We now compute the Fourier transform $\fep \psi$, which by \eqref{fourierpp} is 
$$
\fep \psi=\sum_{n\geq 0} a_n\left( p^n(1-\frac 1p)\sum_{k=0}^\infty \epsilon_{-n-k}-p^n\frac 1p \epsilon_{-n+1}\right).
$$
The sum on the right only involves $\epsilon_{j}$ for $j<0$ except for the terms where $n=0$ and $k=0$ in the sum $\sum_0^\infty \epsilon_{-n-k}$ and the term with $n=0$ and $n=1$ in $-p^n\frac 1p \epsilon_{-n+1}$. Thus the terms not involving $\epsilon_{j}$ for $j<0$  are given by the expression $a_0(1-\frac 1p)\epsilon_0-a_1\epsilon_0-a_0\frac 1p \epsilon_1$. Since  $\psi \in \son_1(\Q_p,e_p)$  the sum of the terms involving the $\epsilon_{j}$ for $j<0$ is equal to $0$ and thus one gets the equality 
$$
\fep \psi=a_0(1-\frac 1p)\epsilon_0-a_1\epsilon_0-a_0\frac 1p \epsilon_1=\left(a_0(1-\frac 1p)-a_1\right)\epsilon_0-a_0\frac 1p \epsilon_1=a\epsilon_0+b\epsilon_1.
$$
Finally in order that $\fep(a\epsilon_0+b\epsilon_1)$ vanishes for $\vert x\vert<1$ requires that it vanishes for $x=0$ \ie that $\int (a\epsilon_0(x)+b\epsilon_1(x))dx=0$ which means that $a+pb=0$. This gives $\left(a_0(1-\frac 1p)-a_1\right)-a_0=0$, \ie $a_0=-pa_1$, so that $\psi$ is a scalar multiple of $\sigma_p$. \endproof 
\subsection{The Sonin space in the semilocal case}\label{sectsonin}
Let $S$  be as above, and $\vert\bullet \vert_S:
X_S=\A_{S}/\Gamma\to \R_+$ be the module as in \eqref{module}. We let $\alpha$ be the character of the additive group $\A_{S}=\prod_S \Q_v$ obtained as the product of the $e_p$ (see \eqref{ep}) with $e_\infty(x):=\exp(2\pi i x)$. We let $\sigma_S:=\otimes_{S\setminus \{\infty\}} \sigma_p$.
\begin{defn}\label{soninsemiloc} Let $\lambda>0$. The semilocal Sonin space  $\son_\lambda(X_S,\alpha)$ is the  subspace of  the Hilbert space $L^2
(X_S)^{K_S}$ defined as follows
 \[
\son_\lambda(X_S,\alpha):= \{ f \in L^2
(X_S)^{K_S}\mid f(x)=0 \ \& \ \fourier_S f(x)=0\quad \forall x, \vert x\vert <\lambda\}
\]
where $\fourier_S $ denotes the Fourier transform with respect to $\alpha$.
\end{defn}
\begin{prop}\label{sonintensor}
\begin{enumerate} \item[(i)] Let $f\in \son_\lambda(\R,e_\infty)$, $\theta_S(f)$ be the class of the function  $\sigma_S\otimes f$  in $L^2
(X_S)$. Then $\theta_S(f)$ belongs to $\son_\lambda(X_S,\alpha)$.
\item[(ii)] One has
	\begin{equation}\label{tensorsonin1}
\fourier_\mu(w_S(\theta_S(f)))(s)=\fourier_\mu(w_\infty(f))(s)\times \prod_{S\setminus \{\infty\}}  \left ( 1-p^{-\frac 12 -is}\right)
\end{equation}
\end{enumerate}
\end{prop}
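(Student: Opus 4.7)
The strategy is to reduce both assertions to the two local properties of $\sigma_p$ established in \cref{soninppp}: its support lies in $\{|y|_p\ge 1\}$, and $\fep\sigma_p=\sigma_p$.

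For part (i), the plan is to verify vanishing of $\theta_S(f)$ and of $\fourier_S\theta_S(f)$ on $\{x\in X_S:|x|_S<\lambda\}$. On $\A_S$ the function $\sigma_S\otimes f$ is supported where $|y_p|_p\ge 1$ for every $p\in S\setminus\{\infty\}$ and $|y_\infty|_\infty\ge\lambda$; on this support $|y|_S\ge\lambda$. By the product formula for $\Q^*$ restricted to places in $S$, every $\gamma\in\Gamma=\Q_S^*$ satisfies $|\gamma|_S=1$, so the module descends to $X_S$. Consequently every term $(\sigma_S\otimes f)(\gamma\tilde x)$ in the $\Gamma$-sum defining $\theta_S(f)(x)$ vanishes as soon as $|x|_S<\lambda$. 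The same argument applied to $\fourier_S\theta_S(f)$ goes through once one observes that $\fourier_S(\sigma_S\otimes f)=\sigma_S\otimes\fourierer f$ (by Fourier self-reciprocity of each $\sigma_p$) and that $\fourierer f\in\son_\lambda(\R,e_\infty)$ whenever $f$ is.

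For part (ii), the plan is a direct Mellin computation following the pattern of \cref{groundstate}(i). Restricting to the fundamental domain $R_S^*\times\R_+^*$ and evaluating at $(1,u)$, the $\Gamma_+$-sum
\[
\theta_S(f)(1,u)=\sum_{\gamma\in\Gamma_+}\sigma_S(\gamma)\,f(\gamma u)
\]
is supported on those $\gamma$ with $|\gamma|_p\in\{1,p\}$ for every $p\in S\setminus\{\infty\}$, i.e.\ on $\gamma=1/d$ where $d$ is a squarefree product of primes in $S\setminus\{\infty\}$, with weight $\sigma_S(1/d)=\prod_{p\mid d}(-1/p)=\mu(d)/d$. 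Therefore
\[
w_S\theta_S(f)(u)=u^{1/2}\sum_{d}\frac{\mu(d)}{d}\,f(u/d).
\]
The substitution $v=u/d$ in the Mellin integral gives
\[
\fourier_\mu\bigl(w_S\theta_S(f)\bigr)(s)=\Bigl(\sum_{d}\mu(d)\,d^{-\frac12-is}\Bigr)\fourier_\mu(w_\infty f)(s),
\]
and this finite Dirichlet sum factors as the Euler product $\prod_{p\in S\setminus\{\infty\}}\bigl(1-p^{-\frac12-is}\bigr)$, yielding \eqref{tensorsonin1}.

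The only genuinely delicate point is the bookkeeping: one must confirm that the prescription ``class of $\sigma_S\otimes f$'' is compatible with the conventions in \eqref{etas}--\eqref{essl}, identify precisely the squarefree support of $\sigma_S$ on $\Gamma_+$, and check that the resulting Möbius weights assemble into the advertised Euler product. Once this dictionary is in place, both parts follow mechanically from the two local facts about $\sigma_p$ cited at the outset.
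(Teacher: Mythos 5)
Your proposal is correct and follows essentially the same route as the paper: part (i) by the support/self-reciprocity properties of $\sigma_p$ combined with $|\gamma|_S=1$ on $\Gamma$, and part (ii) by the Mellin computation on the fundamental domain. The only difference is presentational — the paper carries out (ii) for $S=\{p,\infty\}$ "for simplicity," while you make the general-$S$ Möbius/Euler-product bookkeeping explicit, which matches the paper's computation factor by factor.
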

\proof $(i)$~Let $z=(x,y)\in R_S\times \R= \A_S$ be such that $\vert z\vert=\vert x\vert_{S\setminus \{\infty\}}\vert y\vert_\infty <\lambda$. Then either $\vert x\vert_{S\setminus \{\infty\}}<1$ or $\vert y\vert_\infty<\lambda$. In both cases one gets $\sigma_S(x)f(y)=0$ as well as $\widehat {\sigma_S}(x)\widehat {f}(y)=0$. The same holds for the elements  $\gamma z\in \A_S$, $\gamma \in \Gamma_+$ so that the class of $\sigma_S\otimes f$  in $L^2
(X_S)$ belongs to $\son_\lambda(X_S,\alpha)$.\newline
$(ii)$~For simplicity of notation we assume  $S=\{p,\infty\}$. One has, as in \eqref{etas}, 
$$
w_S(\sigma_S\otimes f)(1\times\lambda):= \lambda^{1/2}\,\sum_{n\in \Z} (\sigma_p\otimes f)(p^n,p^n \lambda)=\lambda^{1/2}\left(f(\lambda)-\frac 1p f(\lambda/p) \right)
$$
since $\sigma_p=\epsilon_0-\frac 1p \epsilon_1$.
Let $g=w_\infty(f)$ then one has  
$$
\lambda^{1/2}f(\lambda/p)=p^{1/2}(\lambda/p)^{1/2}f(\lambda/p)=p^{1/2}g(\lambda/p)
$$
so that one gets
$$
w_S(\sigma_S\otimes f)(1\times \lambda)=g(\lambda)-p^{-1/2}g(\lambda/p).
$$
Next one has 
$$
\fourier_\mu(g)(s)=\int g(\lambda)\lambda^{-is}d^*\lambda
$$
so that with $g_1(\lambda)=g(\lambda/p)$ one gets with $\lambda=pu$,
$$
\fourier_\mu(g_1)(s)=\int g_1(\lambda)\lambda^{-is}d^*\lambda=\int g(\lambda/p)\lambda^{-is}d^*\lambda=\int g(u)(pu)^{-is}d^*u=p^{-is}\fourier_\mu(g)(s)
$$
which gives the required equality \eqref{tensorsonin1}.\endproof
\subsection{The stability of  Sonin spaces}
We keep the above notation. Since $ \left ( 1-p^{-\frac 12 -is}\right)^{-1}=L_p(\frac 12 +is)$,  we rewrite \eqref{tensorsonin1} as
\begin{equation}\label{ms1}
	\F_\mu w_S(\theta_S(f))(s)=\Big(\prod_{p\in S\setminus\{\infty\}}L_p(\frac 12+is)\Big)^{-1}(\fourier_{\mu} w_\infty f)(s).
\end{equation}
\begin{prop}\label{groundstatedual} Let $\lambda>0$.
\begin{enumerate}
\item[(i)] Let  $\fourier_S$ be the Fourier transform in $L^2(X_S)$. One has $\fourier_S\circ \theta_S=\theta_S\circ \fourierer$.
 \item[(ii)] Let $\iota'_S$ be the map $f\mapsto f$, $L^2\Big(\R, \frac{ds}{|E_\infty(s)|^2}\Big)\to L^2\Big(\R, \frac{ds}{|E_S(s)|^2}\Big)$. One has the commutative diagram 
%\begin{gather} 
\begin{equation}
%\raisetag{-44pt} \,\hspace{50pt}
\begin{gathered}
\xymatrix@C=25pt@R=25pt{ 
L^2(\R)^{ev} \ar[d]^{\upsilon_\infty} \ar[r]^{\theta_S} &
L^2(X_S)^{K_S}\ar[d]^{\upsilon_S}  \\
 L^2\Big(\R, \frac{ds}{|E_\infty(s)|^2}\Big) \ar[r]^{\iota'_S} & L^2\Big(\R, \frac{ds}{|E_S(s)|^2}\Big)
 } \label{srole1}
 %\hspace{140pt}
 \end{gathered}
 \end{equation}
%\end{gather}
\item[(iii)] Let $f,g\in L^2(\R)^{ev}$. One has $\langle \theta_S(f)\vert \eta_S(g)\rangle=\langle f\vert g\rangle$.
\end{enumerate}
\end{prop}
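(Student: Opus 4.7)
The plan is to deduce all three items by combining the transformation formula \eqref{ms1} for $\F_\mu w_S\circ\theta_S$ with the earlier results in \cref{soninppp}, \cref{tauS}, and \cref{pairing}.

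For $(i)$, the map $\theta_S$ sends $f$ to the class in $L^2(X_S)$ of the function $\sigma_S\otimes f$, where $\sigma_S=\otimes_{p\in S\setminus\{\infty\}}\sigma_p$. Since $\fourier_S$ is defined as the tensor product of the local Fourier transforms and \cref{soninppp} gives $\fep\sigma_p=\sigma_p$ for each finite place $p\in S$, one has
\[
\fourier_S(\sigma_S\otimes f)=\sigma_S\otimes\fourierer f.
\]
Passing to classes in $L^2(X_S)$ then yields $\fourier_S\circ\theta_S=\theta_S\circ\fourierer$.

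For $(ii)$, I would evaluate $\upsilon_S(\theta_S(f))=\beta_S(\F_\mu w_S(\theta_S(f)))$ directly. Substituting \eqref{ms1} gives
\[
\upsilon_S(\theta_S(f))(s)=\Big(\prod_{v\in S}L_v(\tfrac12+is)\Big)\Big(\prod_{p\in S\setminus\{\infty\}}L_p(\tfrac12+is)\Big)^{-1}(\F_\mu w_\infty f)(s),
\]
and the finite-prime factors cancel to leave $L_\infty(\tfrac12+is)(\F_\mu w_\infty f)(s)=\upsilon_\infty(f)(s)$. Since $\iota'_S$ acts as the identity on functions, this establishes the commutativity of \eqref{srole1}. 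Boundedness of $\iota'_S$ follows as in \cref{tauS}(i), since the ratio $|E_S(s)/E_\infty(s)|^2=\prod_{p\in S\setminus\{\infty\}}|L_p(\tfrac12+is)|^2$ is bounded above and below uniformly in $s\in\R$.

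For $(iii)$, I would apply \cref{pairing}(ii) with $\xi=\theta_S(f)$ and $\eta=\eta_S(g)$, obtaining
\[
\langle\theta_S(f)\vert\eta_S(g)\rangle=\langle\upsilon_S\theta_S(f)\vert\cV_S\eta_S(g)\rangle_\R.
\]
By $(ii)$ just established, $\upsilon_S\theta_S(f)=\upsilon_\infty(f)$ as a function on $\R$, while by \cref{tauS}(ii), $\cV_S\eta_S(g)=\cV(g)$ as a function on $\R$. Since the pairing $\langle\,\cdot\,\vert\,\cdot\,\rangle_\R$ is the unweighted integral $\int\xi\overline{\eta}\,ds$, these substitutions give
\[
\langle\theta_S(f)\vert\eta_S(g)\rangle=\langle\upsilon_\infty(f)\vert\cV(g)\rangle_\R=\langle f\vert g\rangle,
\]
where the last equality is the $S=\{\infty\}$ case of \cref{pairing}(ii). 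The main obstacle is merely the bookkeeping of the densities $dm_S$ and $ds/|E_S(s)|^2$ against the two unitaries $\cV_S$ and $\upsilon_S$; no delicate analytic point arises beyond what is already packaged in \eqref{ms1} and \cref{pairing}.
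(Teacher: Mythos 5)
Your proof is correct and follows essentially the same route as the paper: part (i) rests on $\fourier_{e_p}\sigma_p=\sigma_p$, and part (ii) is the same substitution of \eqref{ms1} into $\upsilon_S=\beta_S\circ\F_\mu w_S$ with cancellation of the finite Euler factors. For (iii) the paper cancels the factors $\prod_p L_p(\frac12\pm is)$ by hand from \eqref{ms0} and \eqref{ms1} using $\overline{L_p(\frac12-is)}=L_p(\frac12+is)$, whereas you route the same cancellation through \cref{pairing}(ii) and the two commutative diagrams; this is an equivalent repackaging (sharing the paper's normalization conventions for $\cU$ versus $\cU_\infty$), not a different argument.
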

\proof By  \cref{sonintensor}, one has $\theta_S(\son_\lambda(\R,e_\infty))\subset \son_\lambda(X_S,\alpha)$ where $\son_\lambda(X_S,\alpha)$ is the semilocal Sonin space. \newline$(i)$~This follows from the equality $\fep \sigma_p=\sigma_p$.	\newline
$(ii)$~Let $f\in L^2(\R)^{ev}$. We compute $\upsilon_S\circ \theta_S(f)$. By \eqref{dualht0} one has $\upsilon_S=\beta_S\circ \cU_S=\beta_S\circ\F_\mu w_S$.  By \eqref{ms1}  
$$
\upsilon_S\circ \theta_S(f)(s)=\left(\beta_S\F_\mu w_S(\theta_S(f))\right)(s)=\left(\prod_{v\in S} L_v(\frac 12+is) \right)\Big(\prod_{p\in S\setminus\{\infty\}}L_p(\frac 12+is)\Big)^{-1}(\fourier_{\mu} w_\infty f)(s)=
$$
$$
=L_\infty(\frac 12+is)(\fourier_{\mu} w_\infty f)(s)=\upsilon_\infty(f)(s).
$$
$(iii)$~The map $\cU_S=\F_\mu w_S:L^2(X_S)^{K_S}\to L^2(\R)$ is unitary so that one has 
$$
\langle \theta_S(f)\vert \eta_S(g)\rangle=\langle \cU_S\theta_S(f)\vert \cU_S\eta_S(g)\rangle
$$
and using \eqref{ms1} for the left term and \eqref{ms0} for the right term, one gets 
$$
\langle \theta_S(f)\vert \eta_S(g)\rangle=\langle\Big(\prod_{p\in S\setminus\{\infty\}}L_p(\frac 12+is)\Big)^{-1}(\fourier_{\mu} w_\infty f)\vert \Big(\prod_{p\in S\setminus\{\infty\}}L_p(\frac 12-is)\Big)(\fourier_{\mu} w_\infty g)\rangle=
\langle f\vert g\rangle
$$
which gives the required equality. \endproof 
We are now ready to prove the following fact
\begin{thm}\label{isosonin} Let $S\ni \infty$ be a finite set of places and $\lambda >0$. Then the map $\theta_S$ is a hilbertian isomorphism of the Sonin spaces $\theta_S:\son_\lambda(\R,e_\infty)\to\son_\lambda(X_S,\alpha)$ where $\alpha$ is the normalized character.	
\end{thm}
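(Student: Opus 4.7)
The approach is to use the commutative diagram \eqref{srole1} to first upgrade $\theta_S$ to a hilbertian isomorphism of the full spaces, and then to transfer the remaining surjectivity question onto $L^2(\R_+^*,d^*u)$, where it becomes an elementary computation involving the multiplicative dilation operators $T_{1/p}$.

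First I would note that $\iota'_S$ in \eqref{srole1} is a hilbertian isomorphism: the ratio $|E_S(s)|/|E_\infty(s)|=\prod_{p\in S\setminus\{\infty\}}|L_p(\tfrac12+is)|$ is bounded above and below uniformly in $s$, since on the critical line $|1-p^{-1/2-is}|\in[1-p^{-1/2},\,1+p^{-1/2}]$, so the two weighted measures $ds/|E_\infty(s)|^2$ and $ds/|E_S(s)|^2$ are mutually absolutely continuous with bounded densities. Since $\upsilon_\infty,\upsilon_S$ are unitary, the identity $\upsilon_S\theta_S=\iota'_S\upsilon_\infty$ forces $\theta_S:L^2(\R)^{ev}\to L^2(X_S)^{K_S}$ to be a hilbertian isomorphism of the ambient spaces. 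The inclusion $\theta_S(\son_\lambda(\R,e_\infty))\subset\son_\lambda(X_S,\alpha)$ being Proposition~\ref{sonintensor}(i), only the reverse inclusion $\theta_S^{-1}(\son_\lambda(X_S,\alpha))\subset\son_\lambda(\R,e_\infty)$ remains to be established.

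For this I would work in $L^2(\R_+^*,d^*u)$ via the unitaries $w_\infty$ and $w_S$. Extending the computation at the end of the proof of Proposition~\ref{sonintensor}(ii) from $S=\{p,\infty\}$ to general $S$, using the tensor-product structure $\sigma_S=\bigotimes_{p\in S\setminus\{\infty\}}\sigma_p$, gives
\begin{equation*}
w_S\circ\theta_S=\Pi_S\circ w_\infty,\qquad \Pi_S:=\prod_{p\in S\setminus\{\infty\}}\bigl(I-p^{-1/2}\,T_{1/p}\bigr),
\end{equation*}
where $T_a g(u):=g(au)$ is the unitary multiplicative dilation on $L^2(\R_+^*,d^*u)$. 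Under $w_S$ the module $|\cdot|_S$ becomes the variable $u\in\R_+^*$ (because $K_S$ is the kernel of the module, and $X_S\setminus C_S$ is null), so the two Sonin vanishing conditions on $g\in\son_\lambda(X_S,\alpha)$ translate into $w_S(g)(u)=0$ and $w_S(\fourier_S g)(u)=0$ for $u<\lambda$; the second one makes sense because $\fourier_S$ preserves $K_S$-invariance.

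The decisive point is that $\Pi_S$ is invertible on $L^2(\R_+^*,d^*u)$ with bounded inverse $\Pi_S^{-1}=\prod_p\sum_{n\geq 0}p^{-n/2}\,T_{1/p^n}$, and that both $\Pi_S$ and $\Pi_S^{-1}$ preserve the subspace of functions vanishing on $(0,\lambda)$: indeed for $n\geq 0$ the dilate $T_{1/p^n}g$ vanishes on $(0,p^n\lambda)\supset(0,\lambda)$ whenever $g$ does. Given $g\in\son_\lambda(X_S,\alpha)$ and $f:=\theta_S^{-1}(g)\in L^2(\R)^{ev}$, the relation $\Pi_S(w_\infty(f))=w_S(g)$ together with vanishing of the right-hand side on $(0,\lambda)$ yields, after applying $\Pi_S^{-1}$, that $w_\infty(f)=0$ on $(0,\lambda)$; hence $f(x)=0$ for $|x|<\lambda$ by evenness. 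Applying the same argument to $\fourier_S g=\theta_S(\fourierer f)$ (Proposition~\ref{groundstatedual}(i)) gives $\fourierer f(x)=0$ for $|x|<\lambda$. Thus $f\in\son_\lambda(\R,e_\infty)$ and $\theta_S$ sends the archimedean Sonin space bijectively onto the semilocal one; boundedness of the inverse is inherited from $\theta_S^{-1}$ on the ambient spaces. The only non-trivial bookkeeping is the derivation of the semilocal formula $w_S\circ\theta_S=\Pi_S\circ w_\infty$; the rest reduces to the elementary fact that dilating by $1/p^n$ with $n\geq 0$ preserves vanishing near the origin.
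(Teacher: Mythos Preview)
Your argument is correct and complete. It differs from the paper's proof in the way the reverse inclusion $\theta_S^{-1}(\son_\lambda(X_S,\alpha))\subset\son_\lambda(\R,e_\infty)$ is established. The paper uses the duality identity $\langle \theta_S(f)\mid \eta_S(g)\rangle=\langle f\mid g\rangle$ of Proposition~\ref{groundstatedual}(iii): given $h\in\son_\lambda(X_S,\alpha)$ and $f=\theta_S^{-1}(h)$, one pairs $f$ against $g\in P_\lambda$ (resp.\ $g\in\widehat{P_\lambda}$) and uses that $\eta_S(g)\in P_\lambda^S$ (resp.\ $\widehat{P_\lambda^S}$), which is orthogonal to $h$. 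You instead bypass $\eta_S$ entirely and work inside $L^2(\R_+^*,d^*u)$, exploiting the factorization $w_S\circ\theta_S=\Pi_S\circ w_\infty$ with $\Pi_S=\prod_{p\in S\setminus\{\infty\}}(I-p^{-1/2}T_{1/p})$ and the observation that the Neumann series $\Pi_S^{-1}=\prod_p\sum_{n\geq 0}p^{-n/2}T_{1/p^n}$ involves only contractions $T_{1/p^n}$ with $p^n\geq 1$, which preserve vanishing on $(0,\lambda)$. Your route is more elementary---it needs neither the pairing of Proposition~\ref{pairing} nor the support analysis of $\eta_S$ in Proposition~\ref{groundstate}---while the paper's route makes the $\eta_S$/$\theta_S$ duality explicit, which is conceptually useful elsewhere (e.g.\ in relating the two \htt transforms).
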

\proof We already know that $\theta_S(\son_\lambda(\R,e_\infty))\subset\son_\lambda(X_S,\alpha)$ by  \cref{sonintensor} $(i)$. To show that one has equality, let $h\in  
\son_\lambda(X_S,\alpha)$. The commutative diagram \eqref{srole1} shows that the map $\theta_S:L^2(\R)^{ev}\to L^2(X_S)^{K_S}$ is bounded with bounded inverse, hence there exists a unique $f\in L^2(\R)^{ev}$ such that $\theta_S(f)=h$. To show that $f\in \son_\lambda(\R,e_\infty)$  it is enough to show that $f$ is orthogonal to all elements of $ P_\lambda$ and $\widehat{ P_\lambda}$. By  \cref{groundstate} $(ii)$ and $(iv)$, one has $\eta_S(P_\lambda)\subset P_\lambda^S$ and $\eta_S(\widehat{ P_\lambda})\subset \widehat{P_\lambda^S}$. Thus one obtains using  \cref{groundstatedual} $(iii)$ that for any element  $g\in P_\lambda$, or $g \in \widehat{P_\lambda^S}$ one has
$$
\langle f \vert g\rangle=\langle \theta_S(f)\vert \eta_S(g)\rangle=0
$$
since by construction the subspace  $\son_\lambda(X_S,\alpha)$ is orthogonal to $P_\lambda^S$ and to $\widehat{P_\lambda^S}$. Thus we conclude that $f\in \son_\lambda(\R,e_\infty)$. \endproof 

 \subsection{Hilbertian  spaces of entire functions}
 
The stability exhibited by  \Cref{isosonin} shows that the hilbertian structure of the Sonin spaces $\son_\lambda(X_S,\alpha)$ is independent of $S$ and the theory of  Hilbert spaces of entire functions \cite{deB-book} in fact provides a canonical realization of these spaces which we now describe. It is important to point out that the choice of the finite set $S$ plays a key role in fixing the inner product in the Hilbertian space.
\subsubsection{Link with Hilbert spaces of entire functions}
The link between Sonin spaces (called in \cite{deB-book} ``Sonine spaces") and Hilbert spaces of entire functions, due to de Branges \cite{deB, deB-book}, 
is that the map  (\cf \ also \cite{burnol-1,burnol-3})
\begin{align} \label{checkM}
\son_\lambda(\R,e_\infty) \ni f \mapsto
 \check{\cM}(f) (z):=\pi ^{-\frac z 2} \Gamma \left(\frac z 2\right)\int_\R f(t) t^{1-z} d^*t \,
\end{align}
sends $ \son_\lambda(\R,e_\infty)$ onto a Hilbert space of entire functions, 
denoted here ${\cB}_\lambda$, which belongs
to the class of de Branges spaces.  
\subsubsection{Stability under semilocal amplification}
The restriction of $\check{\cM}(f) (z)$ to the critical line, \ie $z=\frac 12 +is$, coincides with the dual \htt transform in the case $S=\{\infty\}$, 
\begin{equation}\label{htteq}
	\check{\cM}(f) \left(\frac 12 +is\right)=\upsilon_\infty(f)(s)
\end{equation}
as follows from  \Cref{dualht}, \eqref{msdual}, \eqref{dualht0}.
\begin{prop}\label{propbb} Let $S\ni \infty$ be a finite set of places and $\lambda >0$. The map $\upsilon_S$ is an isomorphism of hilbertian spaces $\son_\lambda(X_S,\alpha)$ with ${\cB}_\lambda$ and one has the commutative diagram 
	\begin{equation}\label{ciccia}
\begin{gathered}
\xymatrix{
\son_\lambda(\R,e_\infty) \ar[rr]^{\theta_S}\ar[rd]_{\upsilon_\infty} &
 &
\son_\lambda(X_S,\alpha)  \ar[ld]^{\upsilon_S}\\
& {\cB}_\lambda}
\end{gathered}
\end{equation}
\end{prop}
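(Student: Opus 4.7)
The plan is to chain together the isomorphisms already established. By \Cref{isosonin}, the map $\theta_S$ is a hilbertian isomorphism $\son_\lambda(\R,e_\infty) \to \son_\lambda(X_S,\alpha)$. On the other hand, the de Branges theory recalled in \eqref{checkM} together with the identity \eqref{htteq} says that the restriction of $\upsilon_\infty$ to $\son_\lambda(\R,e_\infty)$ realizes a hilbertian isomorphism onto ${\cB}_\lambda$: for $f \in \son_\lambda(\R,e_\infty)$, the function $\upsilon_\infty(f)(s)$ is the restriction to the critical line $z = \tfrac{1}{2}+is$ of the entire function $\check{\cM}(f)(z) \in {\cB}_\lambda$, and the inner product on ${\cB}_\lambda$ is precisely the one induced by the measure $ds/|E_\infty(s)|^2$.

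The commutativity of the triangle \eqref{ciccia} is then immediate from the already proved diagram \eqref{srole1} of \Cref{groundstatedual}(ii). Indeed, $\iota'_S$ is the identity at the level of functions of $s$ (only the ambient measure changes), so \eqref{srole1} gives the pointwise identity $\upsilon_S(\theta_S(f))(s) = \upsilon_\infty(f)(s)$ for all $s \in \R$ and all $f \in \son_\lambda(\R,e_\infty)$. Thus $\upsilon_S(\theta_S(f))$ coincides with the restriction of the entire function $\check{\cM}(f) \in {\cB}_\lambda$ to the critical line, which is exactly the claim that the triangle commutes.

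Combining these two inputs finishes the argument: on $\son_\lambda(X_S,\alpha)$, the map $\upsilon_S$ factors as $\upsilon_\infty \circ \theta_S^{-1}$, which is a composition of hilbertian isomorphisms and hence is itself a hilbertian isomorphism onto ${\cB}_\lambda$. No serious obstacle arises; the one point requiring care is to check that \eqref{htteq} genuinely matches the pointwise value of $\upsilon_\infty$ on the critical line, but this is a direct unpacking of \eqref{dualht0}, \eqref{msdual} and \eqref{checkM}. In particular, the inner product on ${\cB}_\lambda$ transported back to $\son_\lambda(X_S,\alpha)$ through $\upsilon_S$ coincides with the ambient $L^2(X_S)^{K_S}$ inner product, the consistency being guaranteed by \Cref{groundstatedual}(iii) together with the fact that the two Radon–Nikodym derivatives $dm_S/dm$ and $|E_\infty|^2/|E_S|^2$ cancel the extra local factors brought in by $\theta_S$ versus $\eta_S$.
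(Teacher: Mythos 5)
Your proof is correct and is essentially the paper's own argument, which simply cites the commutative diagram \eqref{srole1} together with \eqref{htteq} (with \Cref{isosonin} supplying the surjectivity of $\theta_S$ onto $\son_\lambda(X_S,\alpha)$, as you make explicit). One caveat on your closing sentence: the proposition only asserts a \emph{hilbertian} isomorphism, and the paper remarks right after it that ${\cB}_\lambda$ inherits $S$-dependent inner products; the inner product transported by $\upsilon_S$ agrees with that of $L^2(X_S)^{K_S}$ only when ${\cB}_\lambda$ carries the norm induced from $L^2\bigl(\R,\frac{ds}{|E_S(s)|^2}\bigr)$ --- which is automatic since $\upsilon_S$ is unitary by construction --- whereas for the $S=\{\infty\}$ norm the map is merely bounded with bounded inverse, and \Cref{groundstatedual}(iii) is a duality (sesquilinear pairing) statement rather than an isometry, so it does not give the cancellation you invoke.
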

	\proof This follows from the diagram \eqref{srole1} together with \eqref{htteq}.\endproof 
Note that  ${\cB}_\lambda$ inherits different inner products from its embedding in $L^2\Big(\R, \frac{ds}{|E_S(s)|^2}\Big)$. 

\subsubsection{De Branges space ${\cB}^S_\lambda$}
Let $z^\sharp$ designate
the symmetric of $z$ with respect to the critical line $L=\frac 12 + i \R$.
 A de Branges space is a Hilbert space $\cK$ of entire functions satisfying the following axioms:
 \begin{enumerate}
 	\item[(1)] the evaluation at any  $w\in \C$ is continuous;
 	\item[(2)] the map $F \mapsto F^{\#}$, $F^{\#}(w)=\overline{F\left(w^{\#}\right)}$ is a unitary anti-isometry of $\cK$;
 	\item[(3)] if  $F(\gamma)=0$ then $G(w)=\left(w-\gamma^{\#}\right) /(w-\gamma) F(w)$ belongs to $\cK$ and $\Vert G\Vert=\Vert F\Vert$.
 \end{enumerate}
It follows from these axioms that for any finite $S\ni \infty$ the space ${\cB}^S_\lambda$ obtained by endowing the space ${\cB}_\lambda$ with the norm induced by  its embedding in $L^2\Big(\R, \frac{ds}{|E_S(s)|^2}\Big)$ is a de Branges space. 

\section{Metaplectic representation in the Jacobi picture} \label{metasect} 
\subsection{Infinitesimal representation and prolate operator}
 We recall that the metaplectic representation $\varpi$ of $\frak{sl}(2, \R)$ on $L^2(\R)$
is realized at the infinitesimal level by letting the standard basis $\{h, e_+, e_-\}$, 
with $[h, e_+] = 2e_+$, $[h, e_-] = -2e_-$, $[e_+, e_-] = h$, act on
$\cS(\R)$, via the differential operators 
\begin{align} \label{metaplrep}
\varpi(h):=x\partial_x + \frac 12 , \quad \varpi(e_+):=i \pi x^2 , \quad \varpi(e_-):=\frac{i}{4\pi} \partial_x^2 .
\end{align}
Thus $\varpi(h)$ is up to a factor of $i$ the scaling $\scal$ while the Hermite operator, is obtained as $\varpi(k)$  using the generator $k= i(e_- - e_+)$ of the
maximal compact subgroup $K=\widetilde{SO}(2)$.  
With these notations one has 
\begin{align} \label{b-k}
 \varpi(k) =\pi x^2 -\frac{1}{4\pi} \partial_x^2
 \end{align}
is the Hermite operator with spectrum $\{ n+\frac 12 ; \, n \in \Z^+ \}$ 
and the Hermite functions 
$$
h_n=\left(2^{2n-\frac 12} n! \ \pi^n \right)^{-1/2} P_n(x) e^{-\pi x^2/2}, \quad \text{ where } \quad
  P_n= (-1)^n e^{2\pi x^2} \partial_x^n \left(e^{-2\pi x^2}\right) ,
$$
provide the associated eigenfunctions forming an orthonormal basis. \newline
Thus, switching to the other standard $\C$-basis $\{k, n_+, n_-\}$, 
where $n_\pm=\frac 12\bigl(h \mp i(e_+ + e_-)\bigr)$, the representation $\varpi$ is determined by  
\eqref{b-k} together with  
\begin{align} \label{b-n}
\varpi(n_+) &= \frac{\partial^2}{8 \pi }+\frac{x\,\partial}{2} +\frac{1}{2} \pi  x^2 +\frac{1}{4}
=\left(\frac{\partial}{2 \sqrt{2 \pi }}+x\sqrt{\frac{\pi }{2}}\right)^2 = a^2 \\
\varpi(n_-) &= - \frac{\partial^2}{8 \pi }+\frac{x\,\partial}{2} -\frac{1}{2} \pi  x^2 +\frac{1}{4}
= -\left(-\frac{\partial}{2 \sqrt{2 \pi }}+x\sqrt{\frac{\pi }{2}}\right)^2  = - (a^*)^2 
 \end{align}
where  $a=\frac{\partial}{2 \sqrt{2 \pi }}+x\sqrt{\frac{\pi }{2}}$ and $a^*=-\frac{\partial}{2 \sqrt{2 \pi }}+x\sqrt{\frac{\pi }{2}}$ are the {\em annihilation} and
 {\em creation} operators, in terms of which $\varpi(k) = aa^*+a^*a$.
They function as {\em lowering} and {\em raising} operators with respect to the
basis $\{ h_n ; \, n \in \Z^+\}$ since
\begin{align} \label{up-down} 
a (h_0) =0, \quad \  a (h_n)= \sqrt{\frac n2}\, h_{n-1} , \quad \text{and} \quad  a^* (h_n)= \sqrt{\frac{n+1}{2}}\, h_{n+1}
\quad \forall \, n\in \Z^+ .
\end{align}
The Casimir operator $C:=h^2+2(e_+e_-+e_-e_+)$ fulfills $\varpi(C)=-\frac 34$.
In particular this shows that $L^2(\R) = L^2(\R)^{ev} \oplus L^2(\R)^{odd}$ represents
the decomposition of $\varpi$ into two irreducible subrepresentations 
of lowest weight $\frac 12$ and $\frac 32$ respectively.

\begin{prop}\label{proprolate} At the formal level the operator ${\bf W}_\lambda$ is of the form $\varpi(\cW_\lambda)$ where $\cW_\lambda$ is the following element of the enveloping algebra $\cU(\frak{sl}(2, \R))$
$$
\cW_\lambda=h^2 +4 \pi \lambda^2\, k-\frac 14.
$$    
\end{prop}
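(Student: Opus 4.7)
The plan is to reduce the identification to a short algebraic calculation by combining the expression for ${\bf W}_\lambda$ already derived in the proof of the corollary to \Cref{numberN} with the explicit formulas \eqref{metaplrep} and \eqref{b-k} defining $\varpi$ on the generators $h$ and $k$. Since $\varpi$ extends to a unital algebra homomorphism from $\cU(\frak{sl}(2,\R))$ into the algebra of (formal) differential operators, it suffices to match $\varpi(h^2)$, $\varpi(k)$ and the scalar $-\frac 14$ with the three summands of ${\bf W}_\lambda$.

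First I would recall from the derivation of \eqref{wlambdconj1} the identity
\begin{equation*}
{\bf W}_\lambda \;=\; -\scal^2 + \lambda^2\,{\bf H} - \frac 14 ,
\end{equation*}
with $\scal=-i(H+\tfrac 12)$ and ${\bf H}=-\partial^2+(2\pi q)^2$. The first step is then to observe that $\varpi(h)=x\partial_x+\tfrac 12=H+\tfrac 12$, so $\scal=-i\varpi(h)$ and consequently $-\scal^2=\varpi(h)^2=\varpi(h^2)$, using that $\varpi$ is a representation of the enveloping algebra.

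The second step is the rescaling of the Hermite operator: from \eqref{b-k} one reads off immediately that
\begin{equation*}
4\pi\,\varpi(k) \;=\; 4\pi^2 x^2 - \partial_x^2 \;=\; (2\pi q)^2-\partial^2 \;=\; {\bf H},
\end{equation*}
hence $\lambda^2\,{\bf H}=4\pi\lambda^2\,\varpi(k)=\varpi(4\pi\lambda^2 k)$. Combining the two equalities and treating the constant $-\frac 14$ as the image of the central element $-\frac 14\in\cU(\frak{sl}(2,\R))$, one concludes
\begin{equation*}
{\bf W}_\lambda \;=\; \varpi(h^2) + \varpi(4\pi\lambda^2 k) - \tfrac 14 \;=\; \varpi\!\left(h^2 + 4\pi\lambda^2 k - \tfrac 14\right) \;=\; \varpi(\cW_\lambda).
\end{equation*}

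There is no real obstacle here: the statement is purely formal, so the delicate question of selfadjoint extensions does not enter, and the only thing one must be careful about is that $\varpi$ is applied to $h^2$ via the universal property of $\cU(\frak{sl}(2,\R))$ (so that $\varpi(h^2)=\varpi(h)^2$, not some symmetrized product). No ordering ambiguity arises because $h$ and $k$ commute with themselves, and the two summands $h^2$ and $4\pi\lambda^2 k$ are computed separately before being added.
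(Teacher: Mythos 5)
Your proposal is correct and follows essentially the same route as the paper, which simply cites \eqref{WLambdaq1}, \eqref{metaplrep} and \eqref{b-k}: you rewrite ${\bf W}_\lambda=-\scal^2+\lambda^2{\bf H}-\frac14$ as in the corollary to \Cref{numberN} and match $-\scal^2=\varpi(h)^2$ and ${\bf H}=4\pi\,\varpi(k)$ term by term. The extra care you take about $\varpi(h^2)=\varpi(h)^2$ and the unital extension to $\cU(\frak{sl}(2,\R))$ is exactly the (implicit) content of the paper's one-line argument.
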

\proof This follows using \eqref{WLambdaq1}, \eqref{metaplrep} and \eqref{b-k}.\endproof

 \subsection{Metaplectic representation and orthogonal polynomials} \label{HT}
We shall now present the metaplectic representation as a special case of a general construction for othogonal polynomials. We first consider, as in  \cref{an}, the general set-up of orthogonal polynomials with respect to a measure $d\mu$ on $\R$ such that $d\mu(-s)=d\mu(s)$. Modulo a change of phase in the orthonormal basis we may assume that  the coefficients $a_n$ of the   Jacobi matrix $A$ for the multiplication by the variable  $s\in \R$ are positive, 
\begin{equation}\label{matrixA}
A=\left(
\begin{array}{cccccc}
 0 & a_0 & 0 & 0 & 0 & \ldots \\
 a_0 & 0 & a_1 & 0 & 0 & \ldots \\
 0 &   a_1 & 0 & a_2 & 0 & \ldots \\
 0 & 0 &   a_2 & 0 & a_3 & \ldots \\
 0 & 0 & 0 &   a_3 & 0 & \ldots\\
 \ldots & \ldots & \ldots & \ldots & \ldots & \ldots \\
\end{array}
\right)
\end{equation}
The  coefficients $a_n$ are determined by the moments $c_n:=\int t^nd\mu(t)$ by (see \cite[2.2.7, 2.2.15]{Szego}) 
\begin{equation}\label{Szego}
a_n=\frac{k_n}{k_{n+1}}
, \ \ \ 
k_n=(D_{n-1}/D_n)^{1/2}
\end{equation}
 where the $D_n$ are the determinants of Hankel matrices of the form, since the odd moments vanish, 
$$
D_5=\left(
\begin{array}{cccccc}
 c_0 & 0 & c_2 & 0 & c_4 & 0 \\
 0 & c_2 & 0 & c_4 & 0 & c_6 \\
 c_2 & 0 & c_4 & 0 & c_6 & 0 \\
 0 & c_4 & 0 & c_6 & 0 & c_8 \\
 c_4 & 0 & c_6 & 0 & c_8 & 0 \\
 0 & c_6 & 0 & c_8 & 0 & c_{10} \\
\end{array}
\right)
$$
To investigate the  Lie algebra generated by $A$ and the grading operator $N$, $NP_n=nP_n$. We let
\begin{equation}\label{eplusminus}
F_+:=N+\frac{1}{2i}[A,N],\ \ F_-:=N-\frac{1}{2i}[A,N]	
\end{equation}
\begin{lem}\label{eplusminus1}\begin{enumerate} \item[(i)] ~The double commutator $[[A,N],N]=A$.
\item[(ii)] The commutator $[F_+,F_-]$ is equal to $-iA$.
\item[(iii)] The double commutator $[A,[A,N]]$ is equal to $f(N)$ for the function
\begin{equation}\label{fn}
	f(n)= 2 (a_{n-1}^2- a_n^2)
\end{equation} 
\item[(iv)] One has  $[A,F_+]=2i F_++\Upsilon$ where the diagonal matrix $\Upsilon$ has the entries $d_n$ with 
\begin{equation}\label{dn}
	\frac 1id_n=-2n+a_n^2-a_{n-1}^2
\end{equation}
\item[(v)] The matrix $F_+-\frac 12 i A$ is upper triangular and is the sum $N-i\mathcal S$ where $\mathcal S$ is the weighted shift
$$
\mathcal S=\left(
\begin{array}{cccccc}
 0 & a_0 & 0 & 0 & 0 & 0 \\
 0 & 0 & a_1 & 0 & 0 & 0 \\
 0 & 0 & 0 & a_2 & 0 & 0 \\
 0 & 0 & 0 & 0 & a_3 & 0 \\
 0 & 0 & 0 & 0 & 0 & a_4 \\
 0 & 0 & 0 & 0 & 0 & 0 \\
\end{array}
\right)
$$	
\end{enumerate}
\end{lem}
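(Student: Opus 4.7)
The plan is to prove all five parts by explicit computation in the orthonormal basis $\{P_n\}_{n\in\N}$, using only the two defining actions
\[
AP_n = a_{n-1}P_{n-1} + a_n P_{n+1}, \qquad NP_n = n P_n,
\]
with the convention $a_{-1}=0$. Once the action of $A$ and $N$ is nailed down, each of (i)--(v) reduces to a finite manipulation of matrix coefficients in this basis; there is no conceptual obstacle, and the only real risk is sign-tracking around the factors $\tfrac{1}{2i}=-\tfrac{i}{2}$.

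First I would compute the single commutator directly from the two actions above, obtaining
\[
[A,N]P_n = a_{n-1}P_{n-1} - a_n P_{n+1}.
\]
Applying $\mathrm{ad}_N$ once more flips the sign on the $P_{n+1}$ term and restores the $+$, yielding $[[A,N],N]P_n = a_{n-1}P_{n-1} + a_n P_{n+1} = A P_n$, which is part (i). Part (ii) follows by expanding the commutator of $F_\pm = N \pm \tfrac{1}{2i}[A,N]$ bilinearly: the $[N,N]$ term vanishes, the $[[A,N],[A,N]]$ term vanishes, and the two cross terms combine via (i) to give $[F_+,F_-] = \tfrac{1}{i}[[A,N],N] = -iA$.

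For (iii) I would compute $A[A,N]P_n$ and $[A,N]AP_n$ separately: each produces contributions in $P_{n-2}$, $P_n$, $P_{n+2}$; the $P_{n\pm2}$ terms are symmetric and cancel in the difference, while the diagonal coefficients combine to $2(a_{n-1}^2-a_n^2)$, which is $f(N)P_n$. Part (iv) is then bookkeeping: writing $[A,F_+] = [A,N] + \tfrac{1}{2i}[A,[A,N]]$ using (iii), then subtracting $2iF_+ = 2iN + [A,N]$, leaves the diagonal operator $\Upsilon = \tfrac{1}{2i}f(N) - 2iN$ whose $n$-th entry satisfies $\tfrac{1}{i}d_n = -2n + a_n^2 - a_{n-1}^2$, as required.

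Finally, for (v) I would evaluate $(F_+ - \tfrac{i}{2}A)P_n$ in one line:
\[
\Bigl(F_+ - \tfrac{i}{2}A\Bigr)P_n = nP_n - \tfrac{i}{2}\bigl([A,N] + A\bigr)P_n = nP_n - i\,a_{n-1}P_{n-1},
\]
since $[A,N] + A$ kills the $P_{n+1}$ coefficient and doubles the $P_{n-1}$ one. This is precisely the action of $N - i\mathcal{S}$ with $\mathcal{S}$ the weighted-shift matrix displayed in the statement, proving upper triangularity at the same time. The main obstacle throughout is purely notational: keeping straight the sign of $\tfrac{1}{2i}$ and the relative signs of the $\pm a_n$ contributions in $[A,N]$ versus $A$; fixing the action on $P_n$ at the outset makes every subsequent step a short direct verification.
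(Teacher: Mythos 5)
Your proposal is correct and follows essentially the same route as the paper: both proofs are direct computations of the commutators of the Jacobi matrix $A$ with the diagonal operator $N$, the only cosmetic difference being that you track the action on the basis vectors $P_n$ (with the convention $a_{-1}=0$) while the paper displays the resulting matrices. All signs and coefficients in your computation — in particular $[A,N]P_n=a_{n-1}P_{n-1}-a_nP_{n+1}$, the diagonal value $2(a_{n-1}^2-a_n^2)$ in (iii), and $\Upsilon=\tfrac{1}{2i}f(N)-2iN$ in (iv) — agree with the paper's.
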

\proof $(i)$~The commutator $[A,N]$ is of the form 
\begin{equation}\label{comm}
\left(
\begin{array}{cccccc}
 0 & a_0 & 0 & 0 & 0 & 0 \\
 -a_0 & 0 & a_1 & 0 & 0 & 0 \\
 0 & -a_1 & 0 & a_2 & 0 & 0 \\
 0 & 0 & -a_2 & 0 & a_3 & 0 \\
 0 & 0 & 0 & -a_3 & 0 & a_4 \\
 0 & 0 & 0 & 0 & -a_4 & 0 \\
\end{array}
\right)
\end{equation}
and the double commutator $[[A,N],N]=A$.\newline
$(ii)$~One has 
$$
[F_+,F_-]=[N+\frac{1}{2i}[A,N],N-\frac{1}{2i}[A,N]]=\frac{1}{i}[[A,N],N]=-i A.
$$
 $(iii)$~This important fact comes from computing $[A,[A,N]]$ which gives this matrix
$$
\left(
\begin{array}{cccccc}
 -2 a_0^2 & 0 & 0 & 0 & 0 & 0 \\
 0 & 2 a_0^2-2 a_1^2 & 0 & 0 & 0 & 0 \\
 0 & 0 & 2 a_1^2-2 a_2^2 & 0 & 0 & 0 \\
 0 & 0 & 0 & 2 a_2^2-2 a_3^2 & 0 & 0 \\
 0 & 0 & 0 & 0 & 2 a_3^2-2 a_4^2 & 0 \\
 0 & 0 & 0 & 0 & 0 & \ldots \\
\end{array}
\right)
$$
$(iv)$~One has 
$$
[A,F_+]=[A,N]+\frac{1}{2i}[A,[A,N]]=2i F_++ \Upsilon, \ \ d_n=-2in+\frac{1}{2i}f(n).
$$
$(v)$~By \eqref{comm}, the matrix $F_+-\frac 12 i A$ is upper triangular.\endproof 
 We look for a representation $\sigma$ of $\frak{sl}(2, \R)$ such that, with  $A$ as in \eqref{matrixA} and $k= i(e_- - e_+)$, 
 \begin{align} \label{h}
 \sigma(h)=-iA, \quad \sigma(k)=N+c\end{align}
 for some constant $c$. 
The identity
 \begin{align} \label{hhk}
 [h, [h,k]]=-2i([h,e_+]+[h,e_-])=-4i(e_+ - e_-)=4k
 \end{align}
  translates into the relation
 \begin{align} \label{aak}
  [A, [A, \sigma(k))]] = -4\sigma(k) .
 \end{align} 
 By  \cref{eplusminus1} (iii), $[A, [A, N]]$ is the diagonal matrix with diagonal entries  
 $
  2\big(a_{n-1}^2 - a_n^2\big)  
$. 
The 
identity \eqref{aak} is thus equivalent to  
\begin{equation}\label{diffsqan}
 2 (a_{n-1}^2- a_n^2)=-4(n +c), \ \ \forall n\geq 0.  
\end{equation}
One has $[h,k]=-2i(e_++e_-)$ so that 
\begin{equation}\label{eplusminusricu}
e_+=\frac i2\left(k+\frac 12 [h,k]\right), \ \ e_-=\frac i2\left(-k+\frac 12 [h,k]\right)  
\end{equation}
Thus with the notation \eqref{eplusminus} one gets, using \eqref{h} 
$$
\sigma(e_+)=\frac i 2(F_++c), \ \ \sigma(e_-)=\frac i 2(-F_--c)
$$
Thus by  \Cref{eplusminus1} $(iv)$  one obtains 
$$
[\sigma(h),\sigma(e_+)]=\frac 12 [A,F_+]=i F_++\frac 12\Upsilon.
$$ The Lie algebra equality $[h,e_+]=2 e_+$ together with \eqref{dn} and \eqref{diffsqan} then  determine uniquely $c=\frac 14$. This uniquely determines the $a_n$, the representation 
$\sigma$, and  $E_\pm:=-i \sigma(e_\pm)$ 
 are given by
\begin{align} \label{cap-E}
E_+ =  \frac 1 2 \left(N+ \frac 14\right)+\frac 14 [A,N] , \qquad 
E_- = -\frac 1 2 \left(N+ \frac 14\right) + \frac 14 [A,N],
\end{align}

\begin{thm}\label{dzero} 
\begin{enumerate}
\item[(i)] The positive coefficients $a_n$ are specified uniquely  in order for \eqref{h} to define a representation of the Lie algebra $\frak{sl}(2, \R)$, one has  
\begin{equation}\label{coefffixed}
a_n=\frac{1}{2} \sqrt{(2 n+1) (2 n+2)}.
\end{equation}
\item[(ii)] The Hamburger moment problem for the moments associated to the  sequence $a_n$ by \eqref{Szego} 
is determinate.
\item[(iii)] The unique measure $d\mu$ of the moment problem of (ii) is the probability measure proportional to $\vert \Gamma(\frac 14 + \frac 12 is)\vert ^2 ds$.
\item[(iv)]The representation $\sigma$ is 
the even component of the metaplectic representation $\varpi$ of $\widetilde{SL}(2,\R)$. 
\end{enumerate}
\end{thm}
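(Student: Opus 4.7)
My plan is to handle the four assertions in sequence, with the main substance concentrated in (i) and (iv).

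For (i), I would exploit the recursion $a_n^2 - a_{n-1}^2 = 2(n+c)$ which is already isolated in the derivation preceding the theorem from the identity $[A,[A,\sigma(k)]] = -4\sigma(k)$ together with \cref{eplusminus1}(iii). With the natural boundary convention $a_{-1} = 0$, this telescopes to $a_n^2 = (n+1)(n+2c)$, leaving $c$ as the only free parameter. The constant $c$ is then pinned down from the remaining $\mathfrak{sl}(2,\R)$-relations: the relation $[\sigma(h),\sigma(e_+)] = 2\sigma(e_+)$ combined with \cref{eplusminus1}(iv) forces $\Upsilon = 2ic\cdot I$, and a direct computation of the Casimir starting from $\sigma(h)^2 + 2\{\sigma(e_+),\sigma(e_-)\}$ and using the identities of \cref{eplusminus1} yields $\sigma(C) = 4c(c-1)\cdot I$. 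Matching this with the metaplectic value $\varpi(C) = -\tfrac34$ restricts $c$ to the two roots $\{\tfrac14,\tfrac34\}$, and the lower branch $c = \tfrac14$ is singled out as the unique choice compatible with $\sigma(k)\xi_0 = c\,\xi_0$ being the ground-state eigenvalue on the even subspace. Substituting $c = \tfrac14$ gives $a_n = \tfrac12\sqrt{(2n+1)(2n+2)}$.

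For (ii) and (iii), I would proceed jointly. Since $a_n \sim n$ as $n\to\infty$, the Carleman criterion $\sum_n a_n^{-1} = \infty$ applies, the symmetric Jacobi operator thus has a unique self-adjoint extension, and the corresponding Hamburger moment problem is determinate; this gives (ii). Determinacy reduces (iii) to exhibiting a single probability measure with the prescribed Jacobi data, which is exactly what \cref{fourierfmuall} provides: the polynomials $\cP_m$ of \eqref{basicpols} are orthonormal for $d\mu(s) = (2\pi)^{-3/2}|\Gamma(\tfrac14+\tfrac{is}{2})|^2\,ds$ with off-diagonal Jacobi entries $\alpha_n = \tfrac{i}{2}\sqrt{(2n+1)(2n+2)}$, and the phase change $\tilde\cP_n := (-i)^n \cP_n$ converts these purely imaginary entries into precisely the real positive $a_n$ of (i), while leaving the underlying measure invariant.

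For (iv), my strategy is to transport the whole cyclic-pair construction via the \htt unitary $\cV = \cM\circ\cU$ of \cref{mainstart}, under which $\scal$ becomes multiplication by $s$ and, by \cref{numberN}, $N = \mathbf{H}/(8\pi) - \tfrac14$. The ansatz $\sigma(h) = -iA$, $\sigma(k) = N+\tfrac14$ then corresponds on $L^2(\R)^{ev}$ to $-i\scal$ and to $\mathbf{H}/(8\pi) = \tfrac12\varpi(k)$, showing that $\sigma$ and $\varpi|_{L^2(\R)^{ev}}$ generate the same Harish-Chandra module up to the standard rescaling of the compact generator. The uniqueness of the irreducible unitary lowest-weight representation with Casimir $-\tfrac34$ and with lowest $k$-weight matching the even ground state then forces $\sigma(e_\pm)$ to coincide with $\varpi(e_\pm)$ under this intertwiner, completing the identification. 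The main obstacle I expect is the normalization bookkeeping: the factors of $2$ and $i$ arising in the transitions between the $\{h,e_\pm\}$ and $\{k,n_\pm\}$ bases, the phase adjustment used in (iii), and the $\tfrac12$-factor relating $\sigma(k)$ to $\varpi(k)$ must all be tracked carefully and shown to be globally consistent.
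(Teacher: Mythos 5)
Your handling of (ii), (iii) and (iv) is correct and essentially coincides with the paper's: Carleman's criterion $\sum a_n^{-1}=\infty$ for determinacy, identification of the measure through \cref{fourierfmuall} after the phase change $(-i)^n\cP_n$ (the paper's ``rescaling by powers of $i$''), and identification of $\sigma$ with the even component of $\varpi$ by matching lowest weight and Casimir. The genuine divergence is in (i), where you replace the paper's determination of $c=\tfrac14$ (from $[h,e_+]=2e_+$ together with \eqref{dn} and \eqref{diffsqan}) by a Casimir computation --- and this is also where your argument has a concrete gap.

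With the generators exactly as in \eqref{h}, i.e.\ $\sigma(k)=N+c$, $\sigma(e_+)=\tfrac i2(F_++c)$, $\sigma(e_-)=-\tfrac i2(F_-+c)$, the identity $\sigma(C)=4c(c-1)\cdot I$ fails. Writing $A=S+S^*$ for the shift decomposition and $B:=\tfrac1{2i}[A,N]$, one gets $2\bigl(\sigma(e_+)\sigma(e_-)+\sigma(e_-)\sigma(e_+)\bigr)=(N+c)^2-B^2$, hence $\sigma(C)=-A^2+(N+c)^2-B^2$, and $-A^2-B^2=-\tfrac34\bigl(S^2+S^{*2}\bigr)-\tfrac54\bigl(SS^*+S^*S\bigr)$ is not diagonal, so $\sigma(C)$ is not even a scalar. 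Relatedly, $[\sigma(e_+),\sigma(e_-)]=\tfrac14[F_+,F_-]=\tfrac14\sigma(h)$ for \emph{every} $c$ and every choice of $a_n$, so the bracket $[e_+,e_-]=h$ cannot pin down $c$ either. Both defects disappear simultaneously once $\sigma(k)$ is taken to be $2(N+c)$ and $\sigma(e_\pm)$ are doubled --- consistent with $\varpi(k)\vert_{L^2(\R)^{ev}}=2N+\tfrac12$, which follows from \eqref{herm} --- and then indeed $\sigma(C)=-2(SS^*+S^*S)+4(N+c)^2=4c(c-1)$ on the family $a_n^2=(n+1)(n+2c)$, yielding $c\in\{\tfrac14,\tfrac34\}$ and $c=\tfrac14$ from the lowest $k$-weight $2c=\tfrac12$. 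So the ``normalization bookkeeping'' you defer is not peripheral: it is exactly where the determination of $c$ lives, and without it the key step of your (i) does not close. Note finally that your route requires the external datum $C=-\tfrac34$: the relations $[h,[h,k]]=4k$ and $[h,e_\pm]=\pm2e_\pm$ all reduce to the single recursion $a_n^2-a_{n-1}^2=2(n+c)$ and leave $c>0$ free (each $c>0$ gives a lowest-weight module), so some such input is unavoidable; the paper's own one-line determination of $c$ from \eqref{dn} and \eqref{diffsqan} is silent on this point, and your Casimir argument, once the rescaling is made explicit, is a legitimate way to supply it.
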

\proof 
$(i)$~Using \eqref{diffsqan} together with $c=\frac 14$ one gets
$$
2 (a_{n-1}^2- a_n^2)=-4 n-1
$$
so that one obtains  \eqref{coefffixed}.\newline
$(ii)$~By \cite[Thm. 2, p. 86]{simon} {\em the Hamburger moment problem} corresponding 
is determinate, as follows from \cite[Corollary 6.19]{sm} since
$$
\sum_{n=0}^\infty \frac{1}{a_n} \equiv \sum_{n=0}^\infty \frac{2}{\sqrt{(2n+1)(2n+2)}}  = \infty .
$$ 
$(iii)$~The coefficients $a_n$ agree with those given by  \cref{fourierfmuall}, after rescaling by powers of $i$.\newline
$(iv)$~The eigenvalues of $\sigma(k)$, which serve as
weights for the representation $\sigma$ relative to the Cartan subgroup 
$\widetilde{S0}(2) \subset \widetilde{SL}(2,\R)$, exactly
reproduce the spectrum of the Hermite operator transported from $L^2(\R)^{ev}$. 
This identifies the irreducible representation $\sigma$ as being
the even component of the metaplectic representation $\varpi$ of $\widetilde{SL}(2,\R)$.\endproof 
We now  determine explicitly the moments $c_n:=\int t^nd\mu(t)$ where $d\mu$ is the probability measure proportional to $\vert \Gamma(\frac 14 + \frac 12 is)\vert ^2$. We use the equalities \eqref{Szego}, 
$$
k_n=(D_{n-1}/D_n)^{1/2}, \ \ a_n=\frac{k_n}{k_{n+1}}, \ \ a_n^2=\frac{1}{4} (2 n+1) (2 n+2),
$$
which imply
$$
D_{n-1}/D_n=k_n^2, \ \ k_n^2=k_0^2\ \prod_0^{n-1}a(j)^{-2} = \frac{4^n}{\Gamma (2 n+1)}
$$
This then determines the moments and shows that they are rational numbers, the first are
$$
c_0=1,\ c_2= \frac{1}{2}, \ c_4= \frac{7}{4},\ c_6= \frac{139}{8},\ c_8= \frac{5473}{16},\ c_{10}= \frac{357721}{32}
$$
What one finds in general is that the denominators of the moments are $2^n$ for $c_{2n}$  so that they are determined by a sequence of integers. The first ones are 
$$
1,\ 7,\ 139,\ 5473,\ 357721,\ 34988647,\ 4784061619,\  871335013633,\ 203906055033841,$$ $$59618325600871687,\ 21297483077038703899,\ 9127322584507530151393, \ldots $$
This is a classical integer sequence, known as $A126156$ \cite{OEIS}. In fact one has
\begin{prop}
\label{moments} 
The normalized moments $c_n$ of the measure $(2 \pi)^{-3/2}\vert\Gamma(\frac 14+\frac 12 is)\vert^2ds$ are such that $c_n=0$ for $n$ odd and
\begin{equation}\label{moments1}
\sum c_n\frac{(ix)^n}{n!}=\sqrt{\frac{2}{e^{x}+e^{-x}}}.
\end{equation}	
\end{prop}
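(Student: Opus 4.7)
The plan is to interpret the generating function as the characteristic function (Fourier transform) of the probability measure $d\mu(s)=(2\pi)^{-3/2}|\Gamma(\tfrac14+\tfrac{is}{2})|^2\,ds$. By Stirling one has $|\Gamma(\tfrac14+\tfrac{is}{2})|^2 = O(|s|^{-1/2}e^{-\pi|s|/2})$ as $|s|\to\infty$, so the measure has exponential tails, all moments $c_n$ are finite, and one can interchange sum and integral in a strip around the real axis to obtain
$$
\sum_{n\geq 0}c_n\frac{(ix)^n}{n!}=\int_{\R}e^{ixs}\,d\mu(s).
$$
The vanishing of $c_n$ for $n$ odd is immediate since $|\Gamma(\tfrac14+\tfrac{is}{2})|^2$ is even in $s$. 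The problem thus reduces to proving the identity
$$
\int_{\R} e^{ixs}\,|\Gamma(\tfrac14+\tfrac{is}{2})|^2\,ds \;=\; \frac{4\pi^{3/2}}{\sqrt{e^x+e^{-x}}}.
$$

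To prove the latter identity I would realize $\Gamma(\tfrac14+\tfrac{is}{2})$ itself as a Fourier transform. Starting from the Eulerian integral and substituting $t=e^{2u}$ produces
$$
\Gamma(\tfrac14+\tfrac{is}{2})=2\int_{\R}e^{u/2}e^{-e^{2u}}\,e^{isu}\,du,
$$
so $\Gamma(\tfrac14+\tfrac{is}{2})=\hat f(s)$ with $f(u):=2e^{u/2}e^{-e^{2u}}$ real. Then $|\Gamma(\tfrac14+\tfrac{is}{2})|^2=\hat g(s)$, where $g$ is the autocorrelation
$$
g(v)=\int_{\R} f(u)f(u-v)\,du = 4 e^{-v/2}\int_{\R} e^{u}e^{-e^{2u}(1+e^{-2v})}\,du.
$$

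The substitution $w=e^{2u}$ collapses the inner integral to a $\Gamma(\tfrac12)$-integral:
$$
g(v)=2e^{-v/2}\int_0^\infty e^{-w(1+e^{-2v})}w^{-1/2}\,dw=2e^{-v/2}\sqrt{\frac{\pi}{1+e^{-2v}}}=\frac{2\sqrt{\pi}}{\sqrt{e^v+e^{-v}}}.
$$
Fourier inversion (using evenness in $s$ to flip the sign of $v$) then yields
$$
\int_{\R}e^{ixs}|\Gamma(\tfrac14+\tfrac{is}{2})|^2\,ds=2\pi\,g(x)=\frac{4\pi^{3/2}}{\sqrt{e^x+e^{-x}}},
$$
and multiplication by $(2\pi)^{-3/2}$ produces $\sqrt{2/(e^x+e^{-x})}$, as required.

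There is no serious obstacle in this plan; the whole argument amounts to the standard derivation of Ramanujan's classical identity $\int e^{2ibx}|\Gamma(a+ib)|^2\,db = 2\pi\Gamma(2a)(2\cosh x)^{-2a}$, specialized at $a=\tfrac14$. The only point requiring some care is bookkeeping of the Fourier normalization and the factor $(2\pi)^{-3/2}$ (which is there precisely so that $d\mu$ has total mass $1$, consistent with the $x=0$ evaluation $c_0=1=\sqrt{2/2}$). Alternatively, one could quote Ramanujan's formula directly, but spelling out the autocorrelation derivation above keeps the proof self-contained and, as a bonus, makes manifest the link with the Mellin-style manipulations already used in \cref{fourierfmuall}.
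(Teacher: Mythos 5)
Your proof is correct and rests on the same idea as the paper's: the left-hand side is the characteristic function of the probability measure, which is evaluated as an autocorrelation (overlap) integral after realizing $\Gamma(\frac 14+\frac{is}{2})$, up to elementary factors, as a multiplicative Fourier transform. The only real difference is the choice of underlying function---the paper pairs the Gaussian $e^{-\pi t^2}$ with its dilate, so the final step is $\lambda^{1/2}\int e^{-\pi \lambda^2 t^2}e^{-\pi t^2}\,dt=\lambda^{1/2}(1+\lambda^2)^{-1/2}$ with $\lambda=e^x$, whereas you autocorrelate $2e^{u/2}e^{-e^{2u}}$ coming straight from the Euler integral and finish with a $\Gamma(\frac 12)$ evaluation; both routes are equally short and yield the same $\sqrt{2/(e^x+e^{-x})}$.
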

\proof
The normalization means $c_0=1$ and that the measure $d\mu$ proportional to $\vert\Gamma(\frac 14+\frac 12 is)\vert^2ds$ is a probability measure. The left hand side of \eqref{moments1} is then equal to 
$$
\sum c_n\frac{(ix)^n}{n!}=\int_{-\infty}^\infty\exp(isx)d\mu(s)=\langle \exp(ixh)\fourier_\mu(w(\xi))\vert \fourier_\mu(w(\xi))\rangle
$$
where $\xi\in L^2(\R)^{ev}$, $\xi(x)=e^{-\pi  x^2}$, and $h$ is the  self-adjoint operator of multiplication by  $s$.
$$
\fourier_\mu(w(\xi))(s)=\int_0^{\infty}x^{1/2-is} e^{-\pi  x^2}d^*x=\frac 12 \pi ^{-\frac 14 +i\frac s 2} \Gamma \left(\frac 14 -i\frac s 2\right)
$$
One has $(\fourier_\mu f_\lambda)(s)=\lambda^{is}(\fourier_\mu f)(s)$ where $f_\lambda(u):=f(\lambda u)$, and taking $\lambda=e^x$ one obtains 
$$
\sum c_n\frac{(ix)^n}{n!}=\langle \lambda^{1/2}\xi_\lambda\vert \xi\rangle=\lambda^{1/2}\int_{-\infty}^\infty e^{-\pi  \lambda^2t^2}e^{-\pi  t^2}dt=
\lambda^{1/2}(1+\lambda^2)^{-1/2}
$$
which gives the required equality. \endproof\

%%%%%%%%%%%%%%%%%%%%%
%%% End material. %%%
%%%%%%%%%%%%%%%%%%%%%

\end{document}